\newcommand{\qand}{\quad\And\quad}
\newcommand{\B}{{\mathcal{B}}}
\newcommand{\net}{{\mathcal{N}}}
\newcommand{\C}{{\mathbb{C}}}
\newcommand{\R}{{\mathbb{R}}}
\renewcommand{\H}{{\mathcal{H}}}
\newcommand{\eps}{\varepsilon}
\renewcommand{\Re}{\mathrm{Re}}
\renewcommand{\Im}{\mathrm{Im}}
\newcommand{\wrt}{\,\textnormal d}
\newcommand{\deriv}[2]{\frac{\wrt^{#2}}{\wrt{#1}^{#2}}}
\newcommand{\wt}{\widetilde}
\newcommand{\conj}{\overline}
\newcommand{\abs}[1]{\mleft|#1\mright|}
\newcommand{\rabs}[1]{\mleft|#1\mright|}
\newcommand{\magn}[1]{\left\|#1\right\|}
\newcommand{\pare}[1]{\mleft(#1\mright)}
\newcommand{\sqbrac}[1]{{\left[{#1}\right]}}
\newcommand{\set}[1]{{\left\{{#1}\right\}}}
\newcommand{\bmat}[1]{\begin{bmatrix}#1\end{bmatrix}}
\newcommand{\spliteq}[2]{\begin{equation}#1\begin{split}#2\end{split}\end{equation}}
\DeclareMathOperator*{\E}{\mathbb{E}}
\DeclareMathOperator*{\argmin}{arg\,min}
\DeclareMathOperator*{\poly}{poly}
\DeclareMathOperator*{\col}{Col}
\DeclareMathOperator*{\spn}{span}
\DeclareMathOperator*{\rank}{rank}
\DeclareMathOperator*{\area}{Area}
\DeclareMathOperator*{\conv}{conv}
\DeclareMathOperator*{\range}{range}
\DeclareMathOperator*{\ball}{Ball}
\DeclareMathOperator*{\inR}{inr}
\DeclareMathOperator*{\len}{len}
\newenvironment{rcases}
  {\left.\begin{aligned}}
  {\end{aligned}\right\rbrace}
\newenvironment{lcases}
  {\left\lbrace\begin{aligned}}
  {\end{aligned}\right.}
\newcommand{\smin}{\sigma_{\min}}
\newtheorem{theorem}{Theorem}[section]
\newtheorem{lemma}[theorem]{Lemma}
\newtheorem{proposition}[theorem]{Proposition}
\newtheorem{corollary}[theorem]{Corollary}
\newtheorem{fact}[theorem]{Fact}
\theoremstyle{definition}
\newtheorem{definition}{Definition}[section]
\newtheorem*{rep@theorem}{\rep@title}
\newcommand{\newreptheorem}[2]{%
\newenvironment{rep#1}[1]{%
 \def\rep@title{#2 \ref{##1}}%
 \begin{rep@theorem}}%
 {\end{rep@theorem}}}
\title{The Pseudospectrum of Random Compressions of Matrices}
\author{Rikhav Shah\footnote{Supported by NSF CCF-2420130}\\UC Berkeley}
\date{\today}
\begin{document}
\maketitle

\newcommand{\semiu}{\textnormal{\~U}}
\newcommand{\constepso}{c_{1,\ell,n}}
\newcommand{\constepst}{c_{2,\ell,n}}
\newcommand{\constlog}{c_{3,\ell,n}}

\begin{abstract}
    The compression of a matrix
    \(A\in\mathbb C^{n\times n}\)
    onto a subspace \(V\subset\mathbb C^n\)
    is the matrix $Q^*AQ$ where the columns of $Q$ form an orthonormal basis for $V$. This is an important object in both operator theory and numerical linear algebra. Of particular interest are the eigenvalues of the compression and their stability under perturbations.
    This paper considers compressions onto subspaces sampled from the Haar measure on the complex Grassmannian.
    We show the expected area of the \(\varepsilon\)-pseudospectrum of such compressions is bounded by \(\textnormal{poly}(n)\log^2(1/\varepsilon)\cdot\eps^\beta\), where \(\beta=6/5,4/3\), or \(2\) depending on some mild assumptions on $A$.
    Along the way, we obtain (a) tail bounds for the least singular value of compressions and (b) non-asymptotic small-ball estimates for random non-Hermitian quadratic forms surpassing bounds achieved by existing methods.
\end{abstract}

\section{Introduction}\label{a0}
The pseudospectrum of a matrix is defined as the set of all eigenvalues of all nearby matrices,
\spliteq{\label{a1}}{\Lambda_\eps(M)=\set{\lambda\in\C:\lambda\text{ is an eigenvalue of }M+E\text{ for some }\magn E\le\eps}.}
The area of this set can be interpreted as a measure of the stability of the eigenvalues of $M$. When $M$ is a normal matrix, the eigenvalues are as stable as one could hope for: they are $1$-Lipschitz functions of the matrix with respect to the operator norm.
Thus, the set $\Lambda_\eps(M)$ is simply the union of disks of radius $\eps$ around each eigenvalue. At the other extreme, when $M$ is a single Jordan block, the eigenvalues are as instable as possible, and the set $\Lambda_\eps(M)$ is a disk of radius $\eps^{1/n}$ where $n$ is the matrix dimension \cite{b0}.

Given a complex matrix $A\in\C^{n\times n}$, this paper estimates the expected area of the $\eps$-pseudospectrum of a random compression of $A$, i.e.
\[\E\area\Lambda_\eps(Q^*AQ)\]
where $Q\in\C^{n\times\ell}$ is an orthonormal basis for a random subspace sampled from the Haar distribution on the complex Grassmannian $\text{Gr}_\ell(\C^n)$.
There are deterministic lower and upper bounds on $\area\Lambda_\eps(Q^*AQ)$ coming from the normal matrix case and Jordan block case respectively,
\[\pi\eps^2\le\area\Lambda_\eps(Q^*AQ)\le\pi\eps^{2/\ell}.\]
This work shows that for small $\eps$, under mild assumptions on $A$, the expected value of $\area\Lambda_\eps(Q^*AQ)$ is much closer to the deterministic lower bound than to the deterministic upper bound.
The key assumptions on $A$ are the following, where $W(A)$ is the numerical range of $A$. 
\begin{enumerate}[(a)]
    \item $W(A)$ is contained in a disk of radius $\poly(n)$,
    \item $W(A)$ contains a disk of radius $1/\poly(n)$,
    \item $\inf_{z\in\C}\sigma_{\ell+8}(z-A)\ge1/\poly(n)$.
\end{enumerate}
Assumption (c) is motivated by the observation that if $\sigma_\ell(z-A)=0$, then $z\in\Lambda_\eps(Q^*AQ)$ with probability 1 since $\rank(z-Q^*AQ)\le\rank(z-A)<\ell$. In other words, it is difficult bound the probability a given point $z$ is in the pseudospectrum when $z-A$ is low-rank.

Our main theorem, Theorem \ref{a2}, provides several non-asymptotic bounds on $\E\area\Lambda_\eps(Q^*AQ)$ in terms of the quantities of $A$ associated with assumptions (a), (b), and (c). Using those assumptions results in the following corollary.%
\begin{corollary}\label{a3}
Let $Q\in\C^{n\times\ell}$ be an orthonormal basis for an $\ell\le n/2-8$ dimensional subspace sampled from the Haar distribution on $\textnormal{Gr}_\ell(\C^n)$. Set
\[
\beta=\begin{cases}
    6/5 & \text{if assuming (a)}\\
    4/3 & \text{if assuming (a) and (b)}\\
    2 & \text{if assuming (a) and (c)}
\end{cases}.
\]
Then
\spliteq{\label{a4}}{
\E\area(\Lambda_\eps(Q^*AQ))\le\poly(n)\log^2(1/\eps)\cdot\eps^\beta.}
\end{corollary}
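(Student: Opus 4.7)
The plan is to reduce the area bound to a pointwise tail estimate for the least singular value $\sigma_\ell(z - Q^*AQ)$ and then apply Theorem \ref{a2} as a black box. The starting identity is
\[
\area(\Lambda_\eps(M)) = \int_\C \mathbf{1}\brac{\sigma_{\min}(z - M) \le \eps}\, dA(z),
\]
so Fubini's theorem yields
\[
\E\area(\Lambda_\eps(Q^*AQ)) = \int_\C \Pr\sqbrac{\sigma_\ell(z - Q^*AQ) \le \eps}\, dA(z).
\]

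The first step is to restrict the region of integration. Eigenvalues of $Q^*AQ$ lie in $W(Q^*AQ) \subseteq W(A)$, and more generally $\sigma_{\min}(z - M) \ge \dist(z, W(M))$ for any matrix $M$. Hence the integrand vanishes whenever $\dist(z, W(A)) > \eps$. Assumption (a) places $W(A)$ inside a disk of radius $\poly(n)$, so the relevant region $\brac{z : \dist(z, W(A)) \le \eps}$ has area at most $\poly(n)$ uniformly in any reasonable range of $\eps$.

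Next, I would feed Theorem \ref{a2} in to obtain a pointwise tail bound of the shape
\[
\Pr\sqbrac{\sigma_\ell(z - Q^*AQ) \le t} \le \poly(n) \cdot \log^2(1/t) \cdot t^\beta,
\]
uniformly over the restricted region, with the exponent $\beta \in \brac{6/5, 4/3, 2}$ determined by which of (a), (b), (c) is in force. The polynomial constants will absorb the dependence on $\magn{A}$, $\inR(W(A))$, or $\inf_z \sigma_{\ell+8}(z-A)$ demanded by the relevant assumption. Multiplying the uniform pointwise bound, taken at $t = \eps$, by the $\poly(n)$-area region then yields the claim
\[
\E\area(\Lambda_\eps(Q^*AQ)) \le \poly(n) \cdot \log^2(1/\eps) \cdot \eps^\beta.
\]

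The main subtlety I expect is verifying that the tail bound from Theorem \ref{a2} really holds with polynomial constants \emph{uniformly} across the relevant $z$ --- in particular that nothing degenerates as $z$ approaches the boundary of $W(A)$, where the geometric lower bound $\sigma_{\min}(z-M) \ge \dist(z, W(M))$ ceases to help. I do not expect genuine new work here beyond carefully matching the hypotheses of Theorem \ref{a2}; the $\log^2(1/\eps)$ factor should be inherited directly from that theorem, likely reflecting a split into small-$t$ and large-$t$ regimes made during its proof or a net argument over an auxiliary random variable.
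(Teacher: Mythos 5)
Your proposal contains a genuine gap, and it centers on a misreading of Theorem \ref{a2}. Theorem \ref{a2} does \emph{not} provide a pointwise tail bound on $\Pr[\sigma_{\min}(z-Q^*AQ)\le t]$; its five items are already upper bounds on $\E\area\Lambda_\eps(Q^*AQ)$ itself, stated in terms of $R$, $r$, and $s_{\ell+8}$. The paper's proof of the corollary is therefore a one-line substitution: assumption (a) gives $R\le\poly(n)+\eps$ and you read off the fifth bound; adding (b) gives $r\ge 1/\poly(n)$ and you read off the fourth bound; adding (c) instead gives $s_{\ell+8}\ge1/\poly(n)$ and you read off the first bound. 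There is no further integration to do at the corollary level.

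The deeper problem is that the uniform pointwise tail bound you posit,
\[
\Pr\bigl[\sigma_{\min}(z-Q^*AQ)\le t\bigr]\le\poly(n)\log^2(1/t)\cdot t^{\beta},
\]
is \emph{false} for $\beta=6/5$ and $\beta=4/3$ under assumptions (a) and (a)+(b) respectively. Near $z=z_{\ell+8}$ (where $\sigma_{\ell+8}(z-A)$ can degenerate), Proposition \ref{a15} gives no control and the only available tail estimate is the linear-in-$\eps$ bound of Proposition \ref{a25}. The fractional exponents $6/5$ and $4/3$ do not come from a uniform per-$z$ estimate: they arise in the proof of Theorem \ref{a2} from integrating two \emph{different} tail bounds over different annular regions $\gamma_{k,x}$, optimizing a split point $t$, and (for item 5) taking a $\tfrac25$--$\tfrac35$ weighted geometric mean of item 4 with the first-order bound of Lemma \ref{a54}. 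You correctly flag the uniformity issue as "the main subtlety," but it is not a detail to be checked --- it is a hole that cannot be filled without essentially redoing the two-regime integration that constitutes the proof of Theorem \ref{a2}. The Fubini step (Fact \ref{a56}) and the restriction of the integral via $\sigma_{\min}(z-M)\ge\dist(z,W(M))$ are both fine, but they live inside the proof of Theorem \ref{a2}, not the corollary.
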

Importantly, \eqref{a4} is close to achieving the optimal dependence on $\eps$ under only assumptions (a) and (c).

\subsection{Motivation}
The stability of eigenvalues of a random matrix under perturbations to its entries has been of recent interest \cite{b1,b2,b3,b4,b5,b6}.
Specifically, one seeks to understand which random matrices typically have eigenvalues that are stable.
Several existing works are motivated by the desire to rigorously analyze algorithms for eigenvalue problems, particularly in finite precision.
This work is similarly motivated by studying a popular class of eigenvalue algorithms, specifically Rayleigh-Ritz methods \cite{b7,b8}. These methods construct an orthonormal basis $Q\in\C^{n\times\ell}$ for some (specially chosen) subspace, and compute the eigenvalues of the compression $Q^*AQ\in\C^{\ell\times\ell}$. In many settings, the eigenvalues of the compression approximate some of the eigenvalues of the original matrix $A$.

The chosen subspace that $Q$ is a basis of depends on the particular Rayleigh-Ritz method used. Typically, one uses a randomized approximation of an invariant subspace of $A$, which can ultimately be a complicated distribution on $\text{Gr}_\ell(\C^n)$.
This paper considers the much simpler Haar distribution. This does not make for an {accurate} Rayleigh-Ritz method, but the distribution is natural and the results of this paper could serve as a starting point for understanding the {stability} of Rayleigh-Ritz methods more broadly.

\subsection{Related work}

\paragraph{Stucture-preserving compressions.}
In the Hermitian case, one readily observes that the compression of a Hermitian matrix remains Hermitian, so control over the size of the pseudospectrum is automatic. However, the compression of a normal matrix is typically not also normal. For example, a compression of a cyclic permutation matrix can be a single Jordan block. For $\dim V=n-1$, there's an explicit characterization of the cases when both a matrix and its compression are normal: it only occurs when $V$ is an invariant subspace or the spectrum of $A$ lies on a line (i.e. $A=z_1I+z_2H$ for some Hermitian $H$ and scalars $z_1,z_2\in\C$)
\cite{b9}. In particular, we can not expect this from random subspaces in general. Furthermore, we seek control of the pseudospectrum of the compression even when the original matrix $A$ is highly non-normal.

\paragraph{Smallest singular values.}
The standard way to control the expected area of the pseudospectrum is to pass to a lower tail bound on the smallest singular value of shifts
\cite{b1,b3,b4}. This comes from an equivalent definition of the pseudospectrum  as \eqref{a1},
\[\Lambda_\eps(M)
=\set{z\in\C:\smin(z-M)\le\eps}
.\]
Note that the entries of $z-Q^*AQ$ are highly dependent random variables.
There has been much work on producing lower tail bounds on $\smin(M)$ for random $M$, but the bulk of it strongly relies on working in the setting where $M-\E(M)$ has i.i.d. entries, e.g. \cite{b10,b11,b12,b13,b6}.

By the Schur formula for the inverse, one has the relation
\[Q(Q^*AQ)^{-1}Q^*=\lim_{t\to\infty}(A+t(I-QQ^*))^{-1}\]
Note that $I-QQ^*$ is a low-rank matrix. The work \cite{b14} studied the norm of $(A+UV^*)$ where $U$ and $V$ are independent Gaussian matrices. The problem in this paper can be roughly be seen as setting $U=V$ and taking the limit of their variance to infinity. This dependence between $U$ and $V$ in this setting makes it non-trivial to adapt the techniques of \cite{b14}.

\paragraph{Small ball probabilities for quadratic forms.}
An important quantity we bound is the small-ball probability for random quadratic forms \(\sup_{z\in\C}\Pr(|x^*Ax-z|\le\eps)\) where $x$ is sampled uniformly from the complex unit sphere.
The distribution of $x^*Ax$ is called the \textit{numerical measure} of $A$, and appears to have been first considered in detail by \cite{b15}. They provide an exact characterization of the distribution when $A$ is Hermitian, and give an integral expression involving polynomial splines for the density in general (\cite[Proposition 3.1]{b15}). Our work is built on top of this result. Their work also computes several interesting properties of these measures, such as the location of possible singularities and regions where most of the mass accumulates. However, they do not provide small-ball estimates.

A related distribution is that of $x^*Ax$ where $x$ has i.i.d. (complex) Gaussian entries. This has has been studied by many works for self-adjoint $A$. \cite{b16} considers the limit as $n\to\infty$ and provides a history of other such works.
\cite{b17}, followed by \cite{b18,b19,b20}, applies a decoupling argument to bound the small-ball probability of $x^*Ax$. Unfortunately, these arguments give insufficient control in this setting.

\subsection{Technical overview}\label{a5}
We consider a complex $n\times n$ matrix $A$ and it's compression $Q^*AQ$ onto an $\ell$ dimensional subspace. Throughout the paper, $Q\in\C^{n\times\ell}$ will be a complex $n\times\ell$ matrix with orthonormal columns and $q\in\C^n$ will be a complex unit vector.
Our primary result, stated in Theorem \ref{a2}, is several non-asymptotic bounds on $\E\area\Lambda_\eps(Q^*AQ)$. Simplifying the bounds under some of the assumptions (a), (b), and (c) results in Corollary \ref{a3}.

Section \ref{a6} combines standard ideas (nets and polarization) to provide a reduction from lower tail bounds on $\smin(z-Q^*AQ)$ to anti-concentration of a particular measure on $\C$. The resulting expression is of the form
\[\Pr\pare{\smin(z-Q^*AQ)\le\eps}\le\poly(\ell)\cdot\E\Pr\pare{\abs{q^*Sq}\le\poly(\ell)\cdot\eps\,|\,S}\]
where $S$ is a random Schur complement of $z-A$, and $q$ is a Haar-distributed unit vector (see Lemma \ref{a7}).
This section crucially uses that $Q$ is \textit{complex}; the analogous reduction over $\R$ is simply not true (take, for instance, $A$ to be skew-Hermitian and $z=0$).

Section \ref{a8} uses this reduction to produce a crude tail bound on $\smin(z-Q^*AQ)$ of the form
\spliteq{\label{a9}}{\Pr(\smin(z-Q^*AQ)\le\eps)\le C_{z,A}\cdot\eps}
where $C_{z,A}$ is a finite constant when $\sigma_{\ell+1}(z-A)$ is positive. This bound of $O(\eps)$ (as opposed to $O(\eps^2)$) on the right hand side is not strong enough to give control on $\area\Lambda_\eps(Q^*AQ)$ on the order of $\eps^\beta$ for $\beta>1$. It is, however, strong enough to give a baseline level of control that allows us then to obtain stronger estimates.

Section \ref{a10} analyzes anti-concentration of $q^*Mq$ for general fixed $M$. It applies standard inequalities about B-splines (appearing for instance in \cite{b21}) to an integral formula of \cite{b15} to reduce a rather complicated expression for the density to a much simpler one. From there, new arguments further bound the anti-concentration in terms of just two key quantities. The first is the area of the support of $q^*Mq$, which is simply the numerical range
\[W(M)=\set{q^*Mq:\magn q=1}.\]
The second is $\sigma_{9}(M)$, the ninth largest singular value of $M$. The resulting bound is of the form
\spliteq{\label{a11}}{
\Pr(\abs{q^*Mq}\le\eps)\le C_M\log^2(1/\eps)\cdot\eps^2
}
where $C_M$ is a finite constant when $\area W(M)>0$ and $\sigma_{9}(M)>0$ (see Proposition \ref{a12}).

Section \ref{a13} applies \eqref{a11} to the case where $M=S$ is a random Schur complement of $z-A$ in order to apply it to the reduction from Section \ref{a6}. 
For this bound to be effective, one must establish lower bounds on $\area W(S)$ and $\sigma_{9}(S)$. A couple different applications of the crude estimate \eqref{a9} establishes this. The resulting bound is of the form
\spliteq{\label{a14}}{
\Pr(\smin(z-Q^*AQ)\le\eps)\le C_{z,A}'\log^2(1/\eps)\cdot\eps^2
}
where $C_{z,A}'$ is finite when $\area W(A)>0$ and $\sigma_{\ell+8}(A)>0$ (see Proposition \ref{a15}).

Section \ref{a16} shows that the values of $C_{z,A}$ and $C'_{z,A}$ are small enough so that the tail bounds \eqref{a9} and \eqref{a14} imply effective bounds on $\E\area\Lambda_\eps(Q^*AQ)$.

\begin{center}
    \begin{tabular}{|l|l|}
        \hline
        \textbf{Notation} & \textbf{Meaning} \\
        \hline
        $\inR(\cdot)$ & Inner radius of a convex set\\
        $\semiu(m,k)$               & Set of $m\times k$ matrices with orthonormal columns.\\
        $X\sim\semiu(m,k)$          & $X$ is sampled from the Haar distribution on $\semiu(m,k)$.\\
        $(A/Q)$ & Generalized Schur complement $A-AQ(Q^*AQ)^{-1}Q^*A$ \\
        $\lambda_n(H)\le\cdots\le\lambda_1(H)$         & Eigenvalues of Hermitian $H$.\\
        $\sigma_n(M)\le\cdots\le\sigma_1(M)$               & Singular values of $M$.\\
        $\sigma_{\min}(M)$              & Least singular value $\sigma_{\min(n,m)}(M)$.\\
        $\Lambda_\eps(\cdot)$     & $\eps$-pseudospectrum.\\
        $W(\cdot)$     & Numerical range.\\
        \hline
    \end{tabular}
\end{center}
\section{Reduction to numerical measure}\label{a6}
The \textit{numerical measure} of a matrix $M$ is the measure $\mu_M$ on $\C$ such that
\[\mu_M(\Omega)=\Pr\pare{q^*Mq\in\Omega}\]
where $q\sim\semiu(n,1)$.
Given a matrix $A$ our goal is to find some (random) matrix $M$ such that that lower tail bounds on $\smin(Q^*AQ)$ can be derived from from expected anti-concentration of $\mu_M$.
It turns out this desired property is satisfied if $M$ is a random Schur complement of $A$.
We depart from standard notation slightly and define a Schur complement of $A$ as
\[(A/Q)=A-AQ(Q^*AQ)^{-1}Q^*A.\]
When $Q$ is the first few columns of the identity matrix, one recovers the usual definition of the Schur complement as the lower right block of $(A/Q)$.
A \textit{random} Schur complement is $(A/Q)$ where $Q\sim\semiu(n,\cdot)$.
The main idea to control $\smin(Q^*AQ)$ is to consider a polynomial sized net applied to $B=(Q^*AQ)^{-1}$.
We first design the net.
\begin{lemma}[Net construction]
\label{a17}
Let $\B=\set{e_j:j\in[\ell]}$ be an orthonormal basis of $\C^\ell$ and set\[\net=\B\cup\set{e_j+\omega^ae_k:j,k\in[\ell],j\neq k,a\in\set{0,1,2}}\]
for $\omega=e^{2\pi i/3}$. Then for all $B\in\C^{\ell\times\ell},$
\[\magn B\le \ell\cdot\max_{v\in\net}\abs{v^*Bv}.\]
\end{lemma}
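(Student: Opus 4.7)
The plan is to show that every entry of $B$ is controlled by $M:=\max_{v\in\net}\abs{v^*Bv}$, and then pass from the entrywise bound to the operator norm via the Frobenius norm. The diagonal entries are immediate, since $B_{jj}=e_j^*Be_j$ and $e_j\in\net$, so $\abs{B_{jj}}\le M$. The substantive step is recovering the off-diagonal entries, which I would do by a polarization identity tailored to cube roots of unity rather than the usual fourth roots.

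Fix $j\ne k$ and for each $\alpha\in\{1,\omega,\omega^2\}$ set $g_\alpha=(e_j+\alpha e_k)^*B(e_j+\alpha e_k)$. Expanding yields
\[
g_\alpha = B_{jj}+B_{kk}+\alpha^{-1}B_{kj}+\alpha B_{jk}.
\]
Because $\omega=e^{2\pi i/3}$ is a primitive cube root of unity, both $\sum_\alpha\alpha^{-1}$ and $\sum_\alpha\alpha^{-2}$ vanish, so multiplying by $\alpha^{-1}$ and summing annihilates the diagonal contribution and the $B_{kj}$ term:
\[
\sum_\alpha\alpha^{-1}g_\alpha = 3 B_{jk}.
\]
Therefore $\abs{B_{jk}}\le \tfrac13\sum_\alpha\abs{g_\alpha}\le \max_\alpha\abs{g_\alpha}\le M$, since each $e_j+\alpha e_k$ lies in $\net$.

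With $\max_{j,k}\abs{B_{jk}}\le M$ in hand, the lemma follows from the elementary chain
\[
\magn B\le \magn B_F = \pare{\sum_{j,k}\abs{B_{jk}}^2}^{1/2}\le \ell\cdot\max_{j,k}\abs{B_{jk}}\le \ell M.
\]

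I do not expect any real obstacle here. The only minor observation is that three phases suffice in the polarization identity; this is what makes $\abs\net=O(\ell^2)$ (instead of the $O(\ell^2)$ with a larger implicit constant one would get from fourth roots of unity) and produces a clean bound with no extra constant factor. The choice $\omega=e^{2\pi i/3}$ is essentially forced by the need to simultaneously kill both $\sum\alpha^{-1}$ and $\sum\alpha^{-2}$ with as few phases as possible.
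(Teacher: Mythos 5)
Your proof is correct and follows essentially the same route as the paper: you establish $\abs{B_{jj}}\le M$ from $e_j\in\net$, recover the off-diagonal entries via the three-point polarization with cube roots of unity (you re-derive the identity by summing $\alpha^{-1}g_\alpha$, while the paper simply writes the identity down), and then pass to the operator norm via $\magn B\le\ell\max_{j,k}\abs{B_{jk}}$ (you go through the Frobenius norm explicitly; the paper asserts the entrywise-to-operator bound directly). The content is the same, so there is nothing to flag.
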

\begin{proof}
Note $\magn B\le \ell\cdot\max_{j,k\in[\ell]}\abs{e_j^*Be_k}$. When the maximum is achieved for $j=k$, then note $e_j\in\net$ so certainly $\abs{e_j^*Be_j}\le\max_{v\in S}\abs{v^*Bv}$. When the maximum is achieved for $j\neq k$, apply the polarization identity:
\[y^*Bx=\frac{(x+y)^*B(x+y)+\omega(x+\omega y)^*B(x+\omega y)+\omega^2(x+\omega^2y)^*B(x+\omega^2y)}3.\]
Then by the triangle inequality,
\spliteq{}{
\abs{e_j^*Be_k}&\le\max\pare{
\abs{(e_j+e_k)^*B(e_j+e_k)},
\abs{(e_j+\omega e_k)^*B(e_j+\omega e_k)},
\rabs{(e_j+\omega^2 e_k)^*B(e_j+\omega^2 e_k)}
}\\&\le\max_{v\in\net}\abs{v^*Bv}
}
as required.
\end{proof}
Applying a union bound over this net gives the desired reduction
\begin{lemma}[Reduction to numerical measure]
\label{a7}
Fix any $A\in\C^{n\times n}$. Let $$Q\sim\semiu(n,\ell),\quad Q'\sim\semiu(n,\ell-1),\quad q\sim\semiu(n,1)$$ be independent and Haar distributed.
Then
\[\Pr\pare{\smin(Q^*AQ)\le\eps}\le3\ell^2\E\Pr\pare{\abs{q^*(A/Q')q}\le2\ell\eps\,|\, Q'}.\]
\end{lemma}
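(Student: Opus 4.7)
The plan is to combine Lemma~\ref{a17} with a union bound, then rewrite each quantity $v^*(Q^*AQ)^{-1}v$ as a quadratic form against a Schur complement of $A$. On the almost-sure event that $Q^*AQ$ is invertible, $\smin(Q^*AQ)\le\eps$ forces $\magn{(Q^*AQ)^{-1}}\ge 1/\eps$, so Lemma~\ref{a17} supplies some $v\in\net$ with $\abs{v^*(Q^*AQ)^{-1}v}\ge 1/(\ell\eps)$. Since $\abs{\net}=\ell+3\ell(\ell-1)\le 3\ell^2$, a union bound reduces matters to a single $v\in\net$.

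For each such $v$, I would normalize $\hat v=v/\magn v$, extend $\hat v$ to a unitary $U=[\hat v\mid V']\in\C^{\ell\times\ell}$, and apply the block inversion formula to $U^*(Q^*AQ)U=(QU)^*A(QU)$ to obtain
\[
\hat v^*(Q^*AQ)^{-1}\hat v=\frac{1}{(Q\hat v)^*(A/QV')(Q\hat v)}.
\]
Because $\magn{v}^2\le 2$ for every $v\in\net$, the event in question forces $\abs{(Q\hat v)^*(A/QV')(Q\hat v)}\le 2\ell\eps$. Right-invariance of the Haar measure under the deterministic unitary $U$ then shows that $(Q\hat v,QV')$ has the same joint distribution as the first column together with the remaining columns of a fresh $\semiu(n,\ell)$ matrix.

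The last step is to trade this correlated pair for the \emph{independent} pair $(q,Q')$ in the statement. The decisive observation is that $(A/Q')Q'=0$ and $Q'^*(A/Q')=0$, so $(A/Q')$ annihilates $\col(Q')$ on both sides. Setting $q_\perp=(I-Q'Q'^*)q/\rmagn{(I-Q'Q'^*)q}$, this yields the identity
\[
q^*(A/Q')q=\rmagn{(I-Q'Q'^*)q}^2\cdot q_\perp^*(A/Q')q_\perp,
\]
and $\rmagn{(I-Q'Q'^*)q}\le 1$ gives the pointwise bound $\abs{q^*(A/Q')q}\le\abs{q_\perp^*(A/Q')q_\perp}$. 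When $q\sim\semiu(n,1)$ is independent of $Q'\sim\semiu(n,\ell-1)$, the conditional law of $q_\perp$ given $Q'$ is Haar on the unit sphere of $\col(Q')^\perp$, which matches the conditional law of the first column of an $\semiu(n,\ell)$ matrix given its remaining columns. Hence for each $v\in\net$,
\[
\Pr\pare{\abs{(Q\hat v)^*(A/QV')(Q\hat v)}\le 2\ell\eps}\le\E\Pr\pare{\abs{q^*(A/Q')q}\le 2\ell\eps\mid Q'},
\]
and summing at most $3\ell^2$ such bounds yields the claim. The main obstacle is precisely this last exchange: the vanishing of $(A/Q')$ on $\col(Q')$ is what makes the shrinkage factor $\rmagn{(I-Q'Q'^*)q}^2\le 1$ work in the favorable direction, allowing $q$ to be decoupled from $Q'$ at no cost.
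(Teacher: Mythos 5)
Your proposal is correct and follows essentially the same route as the paper's proof: net plus union bound, the Schur-complement block inversion identity applied to $(QU)^*A(QU)$, right-invariance of Haar to identify $(Q\hat v, QV')$ with the columns of a fresh orthonormal frame, and the key coupling $q_\perp=(I-Q'Q'^*)q/\rmagn{(I-Q'Q'^*)q}$ together with $\rmagn{(I-Q'Q'^*)q}\le 1$ and $\rmagn v^2\le 2$ to pass from the correlated pair to the independent pair $(q,Q')$.
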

\begin{proof}
Let $\net$ be as defined in Lemma \ref{a17}. Then
that lemma immediately implies by union bound,
\spliteq{}{
\Pr\pare{\magn{(Q^*MQ)^{-1}}\ge\frac1\eps}
  \le
\Pr\pare{\max_{v\in S}\abs{v^*(Q^*MQ)^{-1}v}\ge\frac1{\eps\ell}}
  \le
\sum_{v\in S}\Pr\pare{\abs{v^*(Q^*MQ)^{-1}v}\ge\frac1{\eps\ell}}.
}
For each fixed $v\in\net$, denote $\hat v=\frac v{\magn v}$ and let $U_v$ be a unitary matrix carrying $e_1$ to $\hat v$. Then consider the block decomposition
$QU_v=\bmat{Q\hat v & Q'}$.
Then we may express
\spliteq{\label{a18}}{
\hat v^*(Q^*MQ)^{-1}\hat v
=\hat v^*U_v(U_v^*Q^*MQU_v)^{-1}U_v^*\hat v
=e_1^*\pare{\bmat{Q\hat v & Q'}^*M\bmat{Q\hat v & Q'}}^{-1}e_1.}
Then by the Schur formula for the inverse of a matrix,
\spliteq{\label{a19}}{
\eqref{a18}=\pare{
\hat v^*Q^*MQ\hat v
-
\hat v^*Q^*MQ'
(Q'^*MQ')^{-1}Q'MQ\hat v
}^{-1}=\pare{\hat v^*Q^*(M/Q')Q\hat v}^{-1}.
}
Since $U_v$ is fixed and $Q$ is Haar, $Q'\sim\semiu(n,\ell-1)$ is itself is Haar distributed. Conditioned on $Q'$, the vector $Q\hat v$ is Haar distributed on $\col(Q')^\perp$ and therefore can be expressed as
\[Q\hat v=\frac{(I-Q'Q'^*)q}{\magn{(I-Q'Q'^*)q}}\]
where $q$ is Haar distributed on the entire sphere and is independent of $Q'$. We combine this with \eqref{a19} by noting that $(M/Q')Q'=0$ and $Q'^*(M/Q')=0$. So we obtain simply
\[
\eqref{a18}=\pare{\frac{q^*(M/Q')q}{\magn{(I-Q'Q'^*)q}^2}}^{-1}.
\]
The last observation is that the denominator is at most 1 and $\magn v\le2$.
Altogether this gives
\spliteq{}{
\Pr\pare{\abs{v^*(Q^*MQ)^{-1}v}\ge\frac1{\eps\ell}}
=\Pr\pare{\frac{\magn{(I-(Q')(Q')^*)q}^2}{\abs{q^*(M/Q')q}}\ge\frac1{\magn v^2\eps\ell}}
\le\Pr\pare{q^*(M/Q')q\le2\eps\ell}.
}
Taking the union bound over $\abs\net\le3\ell^2$ terms and applying iterated expectation gives the result.
\end{proof}
\section{First order tail bound}
\label{a8}
\newcommand{\eitc}{e^{-i\theta}}
\newcommand{\eit}{e^{i\theta}}
The goal of this section is to derive a lower tail bound on $Q^*AQ$ with minimal assumptions on $A$. When $\rank(A)<\ell$, one deterministically has $\smin(Q^*AQ)=0$ for any $Q\in\semiu(n,\ell)$. This suggests a natural and necessary assumption that $\rank(A)\ge\ell$, and indeed this is the only assumption we require for our first tail bound.
Let
$$Q\sim\semiu(n,\ell),\quad Q'\sim\semiu(n,\ell-1),\quad q\sim\semiu(n,1).$$
Then, Lemma \ref{a7} shows the following implication.
\[
\text{Anti-concentration of }q^*(A/Q')q\quad\implies\quad\text{Lower tail bound on }\smin(Q^*AQ).\]
Note that there are two independent sources of randomness in $q^*(A/Q')q$, i.e. both $q$ and $Q'$. The bounds in this section allow an \textit{adversary} to pick $Q'$, so that we may focus on the anti-concentration of $q^*Mq$ for \textit{fixed} $M=(A/Q')$ and $q\in\semiu(n,1)$.
The main idea is to pass to the \textit{Hermitian part} of a multiple of $M$,
\[H(\eitc M):=\frac{\eitc M + \eit M^*}{2},\]
by making the key observation that for each $\theta$ and $x\in\C^n$,
\spliteq{\label{a20}}{
\abs{x^*Mx} 
\ge\abs{\Re\pare{\eitc x^*Mx}}
=\abs{\frac{\eitc x^*Mx+\eit x^*M^*x}2}
=\abs{x^*\frac{\eitc M+\eit M}2x}
=\abs{x^*H\pare{\eitc M}x}.
}
For a fixed Hermitian matrix $H$, we actually know the exact density of $q^*Hq$. Specifically,
$q^*Hq$ can be viewed as a random convex combination of the eigenvalues of $H$, where the weights are sampled from the Lebesgue measure on the simplex. This is exactly the measure defined by polynomial $B$-splines \cite{b22}.
The precise definition of $B$-splines is the following.
\begin{definition}\label{a21}
A $B$-spline with knots $t_1<\cdots<t_n$ is denoted $B[t_1,\ldots,t_n]$ and satisfies the recursive formula
\[B[t_j,t_{j+1}](t)=\mathbf1_{t\in(t_j,t_{j+1})},\]
\[B[t_j,\ldots,t_{j+\ell}](t)=\frac{t-t_{j}}{t_{j+\ell-1}-t_{j}}B[t_j,\ldots,t_{j+\ell-1}](t) +   \frac{t_{j+\ell}-t}{t_{j+\ell}-t_{j+1}} B[t_{j+1},\ldots,t_{n+\ell}](t).\]
\end{definition}
In particular, $B$ is a degree $n-2$ polynomial on each interval $[t_j,t_{j+1}]$, and is identically $0$ outside $(t_1,t_n)$.
Since $B$ is continuous in the knots $t_j$, the definition extends to weakly increasing knots $t_1\le\cdots\le t_n$ by taking the limit as adjacent knots approach each other. Several properties of $B$-splines follow directly from this definition; this section makes use of the following two.
\begin{fact}[\cite{b21}]\label{a22}
For any $B$-spline,
\[0\le B[t_1,\ldots,t_n](t)\le 1\quad\forall\,t\in\R,\]
\[\int_{-\infty}^\infty B[t_1,\ldots,t_n](t)=\frac{t_n-t_1}{n-1}.\]
\end{fact}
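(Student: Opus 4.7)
Both properties are proved by induction on $n$ from the recursion in Definition~\ref{a21}. The base case $n=2$ is immediate: $B[t_1, t_2] = \mathbf{1}_{(t_1, t_2)}$ satisfies $0 \le B \le 1$ and $\int B\,dt = t_2 - t_1 = (t_2 - t_1)/(2-1)$.

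Non-negativity propagates easily through the induction: on the support $[t_1, t_{n-1}]$ of $B[t_1, \ldots, t_{n-1}]$ the coefficient $(t - t_1)/(t_{n-1} - t_1)$ is non-negative, and symmetrically for the second term, so the recursion writes $B[t_1, \ldots, t_n]$ as a non-negative combination of non-negative B-splines by the induction hypothesis.

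For the integral formula, I would integrate both sides of the recursion. This reduces the problem to evaluating $\int (t-t_1) B[t_1,\ldots,t_{n-1}](t)\,dt$ and a symmetric partner, which requires not just the zeroth but also the first moment of a B-spline. The first moment I would establish in parallel by the same induction, or via the center-of-mass interpretation, obtaining $\int t \cdot B[t_1,\ldots,t_n](t)\,dt = \frac{t_n-t_1}{n-1} \cdot \frac{t_1 + \cdots + t_n}{n}$. Substituting this back into the integrated recursion, the two contributions collapse telescopically: $\sum_{i=2}^{n-1}[(t_i-t_1)+(t_n-t_i)]=(n-2)(t_n-t_1)$, leaving exactly $(t_n-t_1)/(n-1)$.

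The main obstacle is the upper bound $B \le 1$, which does not follow from a naive induction: on the interior interval $[t_2, t_{n-1}]$ both terms of the recursion are active, and applying the induction hypothesis to each gives only $B \le 2$. To overcome this I would use the Hermite--Genocchi representation, under which $B[t_1,\ldots,t_n](t) \cdot (n-1)/(t_n-t_1)$ is the density at $t$ of $Y = \sum_i \lambda_i t_i$, where $\lambda$ is uniform on the standard $(n-1)$-simplex $\Delta^{n-1}$. Bounding this density reduces to bounding the $(n-2)$-dimensional volume of the simplex cross-section $\{\lambda \in \Delta^{n-1} : \sum_i \lambda_i t_i = t\}$ divided by $\|\nabla_\lambda \sum_i \lambda_i t_i\|$. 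A direct geometric comparison shows this ratio is maximized in the degenerate configuration where all intermediate knots coincide with one endpoint, at which point the density equals exactly $(n-1)/(t_n-t_1)$, yielding $B \le 1$ in general.
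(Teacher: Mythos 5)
The paper does not prove Fact~\ref{a22}; it is cited verbatim from~\cite{b21}, so there is no in-paper argument to compare against. Your non-negativity step is fine. The other two claims, however, each have a real gap. For the integral: the induction you sketch does not close. Integrating the recursion for $B[t_1,\ldots,t_n]$ produces $\int (t-t_1)B[t_1,\ldots,t_{n-1}]\,dt$, a first moment of an order-$(n-1)$ spline; running the same recursion on $\int t\,B[t_1,\ldots,t_{n-1}]\,dt$ produces second moments at order $n-2$, and so on, giving an infinite regress rather than a two-term joint induction. Falling back on the ``center-of-mass interpretation'' is circular here: identifying $(n-1)B/(t_n-t_1)$ with the density of $Y=\sum_i\lambda_i t_i$ ($\lambda$ uniform on the simplex) is exactly the Hermite--Genocchi representation, and one must first show that this representation satisfies the recursion of Definition~\ref{a21} (or otherwise agrees with it) before it can be used to supply moments. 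If you do establish that representation, then $\int B=(t_n-t_1)/(n-1)$ is immediate and the moment bookkeeping is unnecessary.

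For the upper bound $B\le 1$, the load-bearing sentence of your argument is ``a direct geometric comparison shows this ratio is maximized in the degenerate configuration where all intermediate knots coincide with one endpoint.'' That is precisely the nontrivial extremal claim, and you offer no proof of it; checking the value in the degenerate case does not show it is the maximizer. The standard and much shorter route is the partition-of-unity identity $\sum_j B[t_j,\ldots,t_{j+n-1}](t)=1$: substitute the recursion into the sum, reindex the second term by $j\mapsto j-1$, and the two coefficients multiplying $B[t_j,\ldots,t_{j+n-2}]$ sum to $\frac{t-t_j}{t_{j+n-2}-t_j}+\frac{t_{j+n-2}-t}{t_{j+n-2}-t_j}=1$, so the identity follows by induction on the order. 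Together with non-negativity of every summand, this gives $B\le 1$ at once, and is presumably the argument intended by~\cite{b21}.
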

In particular, this makes
\spliteq{\label{a23}}{
\wt B[t_1,\ldots,t_n]:=\frac{n-1}{t_n-t_1}\cdot B[t_1,\ldots,t_n]
}
a probability density function bounded by $\frac{n-1}{t_n-t_1}$.
As we alluded to before, we concretely have the following lemma relating $B$-splines to the numerical measure of Hermitian matrices.
\begin{lemma}[Proposition 3.1 from \cite{b15}]\label{a24}
  If $H$ is Hermitian with eigenvalues $\lambda_n\le\cdots\le\lambda_1$, the probability density function of $q^*Hq$ is $\widetilde B[\lambda_n,\ldots,\lambda_1]$.
\end{lemma}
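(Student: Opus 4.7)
The plan is to reduce the statement to a classical identification of the density of a linear pushforward of the uniform measure on a simplex with a B-spline. The proof has three steps: unitary reduction to the diagonal case, recognition of the squared-modulus vector as uniform on a simplex, and the simplex-to-B-spline identification.

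First I would diagonalize $H=U\Lambda U^*$ with $\Lambda=\textnormal{diag}(\lambda_1,\ldots,\lambda_n)$. Unitary invariance of Haar measure on $\semiu(n,1)$ gives that $U^*q$ is distributed as $q$, so $q^*Hq$ equals $\sum_j\lambda_j|q_j|^2$ in distribution. Next, viewing $\C^n$ as $\R^{2n}$, the squared Cartesian coordinates of a uniform point on the real sphere $S^{2n-1}$ form a Dirichlet$(\tfrac12,\ldots,\tfrac12)$ random vector; summing the conjugate pairs $(\Re q_j)^2+(\Im q_j)^2$ then yields Dirichlet$(1,\ldots,1)$ by aggregation of Dirichlet parameters, which is uniformly distributed on the simplex $\Delta_{n-1}=\{p\in\R_{\ge0}^n:\sum_j p_j=1\}$. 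It remains to show: if $p$ is uniform on $\Delta_{n-1}$, then $X=\sum_j\lambda_j p_j$ has density $\widetilde B[\lambda_n,\ldots,\lambda_1]$.

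For this last step, I would compute the Laplace transform $\E[e^{-zX}]=(n-1)!\int_{\Delta_{n-1}}e^{-z\sum_j\lambda_j p_j}\,dp$. Using partial fractions on $1/\prod_j(w-\lambda_j)$, this integral becomes (for distinct $\lambda_j$) a divided difference of exponentials of the form $\sum_j e^{-z\lambda_j}/\prod_{k\ne j}(\lambda_j-\lambda_k)$ times a power of $z$, which matches the Laplace transform of $\widetilde B[\lambda_n,\ldots,\lambda_1]$ up to the normalization factor $(n-1)/(\lambda_1-\lambda_n)$; the case of coincident eigenvalues follows by continuity in the knots. The main obstacle is matching the divided-difference expression against the Laplace transform of the B-spline with the correct constants, which is straightforward but requires care. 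An alternative route is geometric: the density at $t$ equals the $(n-2)$-dimensional volume of the cross-section $\{p\in\Delta_{n-1}:\sum_j\lambda_j p_j=t\}$ divided by the volume of $\Delta_{n-1}$, and the Curry-Schoenberg identification of B-splines as simplex cross-sections yields the result directly.
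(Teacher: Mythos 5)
The paper does not prove this lemma; it cites it verbatim as Proposition~3.1 of \cite{b15}, so there is no in-paper argument to compare against. Your reconstruction is a correct and standard route to the result. Step one (unitary invariance reducing to the diagonal case) and step two (the vector $(|q_1|^2,\ldots,|q_n|^2)$ is $\mathrm{Dir}(1/2,\ldots,1/2)$ on $2n$ real coordinates, aggregating in conjugate pairs to $\mathrm{Dir}(1,\ldots,1)$, i.e.\ uniform on $\Delta_{n-1}$) are exactly right, and the crucial appearance of \emph{complex} Haar measure is what makes the aggregated parameters equal to $1$ rather than $1/2$ --- this is why the clean B-spline formula holds over $\C$ but not over $\R$. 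Step three is the Curry--Schoenberg / Hermite--Genocchi identification, and either route you propose works. One small caveat on your geometric alternative: the density of the pushforward at $t$ is not literally the cross-sectional $(n-2)$-volume divided by $\vol(\Delta_{n-1})$; by the coarea formula there is an additional (constant) Jacobian factor $\|\nabla_{T\Delta}\,\pi\|^{-1}$ for the linear map $\pi(p)=\sum_j\lambda_j p_j$ restricted to the tangent plane of the simplex, and the Curry--Schoenberg theorem is precisely what packages this constant together with the normalization so that the result is $\widetilde B[\lambda_n,\ldots,\lambda_1]$. The Laplace-transform/divided-difference route avoids having to track that geometric factor explicitly, which is why you flag it as the primary argument; either way the claim is established.
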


By applying the argument \eqref{a20} and Lemma \ref{a7}, the density bound on $q^*Hq$ from Lemma \ref{a24} is enough to derive the following tail bound on $\smin(Q^*AQ)$.
\begin{proposition}\label{a25}
  Fix $A\in\C^{n\times n}$ and $\eps>0$. Then for $Q\sim\semiu(n,\ell)$,
\[\Pr\pare{\smin(Q^*AQ)\le\eps}\le\constepso\frac{\eps}{\sigma_\ell(A)},\]
where $\constepso=24\ell^3(n-1)$.
\end{proposition}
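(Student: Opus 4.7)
The plan is to combine the reduction in Lemma~\ref{a7} with the Hermitian density bound in Lemma~\ref{a24}, doing most of the work inside the conditional over $Q'$.

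First, Lemma~\ref{a7} reduces the target to showing
\[\Pr\!\pare{\abs{q^*(A/Q')q}\le 2\ell\eps\mid Q'}\le\frac{8\ell(n-1)\eps}{\sigma_\ell(A)}\]
almost surely in $Q'$, so that the outer factor $3\ell^2$ recovers the claimed constant $\constepso=24\ell^3(n-1)$. Fix a realization of $Q'$ and set $M=(A/Q')$. Inequality \eqref{a20} gives $\abs{q^*Mq}\ge\abs{q^*H(e^{-i\theta}M)q}$ for every $\theta$, and Lemma~\ref{a24} together with \eqref{a23} bounds the density of the Hermitian quadratic form on the right pointwise by $(n-1)/(\lambda_1(H(e^{-i\theta}M))-\lambda_n(H(e^{-i\theta}M)))$. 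Integrating over the interval $[-2\ell\eps,2\ell\eps]$ and minimizing over $\theta$ reduces the task to lower bounding
\[\sup_\theta\pare{\lambda_1(H(e^{-i\theta}M))-\lambda_n(H(e^{-i\theta}M))}\ge\sigma_\ell(A)/2.\]

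The main obstacle is this spread bound, i.e.\ lower bounding $\operatorname{diam} W(M)$ by $\sigma_\ell(A)/2$. I would split this into two ingredients. The first is $\magn M\ge\sigma_\ell(A)$, which follows from the identity $M=A(I-P)$ with $P=Q'(Q'^*AQ')^{-1}Q'^*A$ the oblique projection onto $\col(Q')$ along $\col(A^*Q')^\perp$: on the subspace $\col(A^*Q')^\perp$, of dimension at least $n-\ell+1$, one has $(I-P)y=y$ and so $\magn{My}=\magn{Ay}$, whence the Courant--Fischer min-max characterization of singular values forces $\magn M\ge\sigma_\ell(A)$. The second ingredient is $0\in W(M)$: for any unit $q'\in\col(Q')$ (nonempty when $\ell\ge 2$) we have $Mq'=0$, hence $(q')^*Mq'=0$. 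Because $W(M)$ is convex and contains the origin, each Hermitian part $H(e^{-i\theta}M)$ satisfies $\lambda_n\le 0\le\lambda_1$, so $\lambda_1-\lambda_n\ge\magn{H(e^{-i\theta}M)}$; maximizing over $\theta$ gives $\operatorname{diam} W(M)\ge w(M)\ge\magn M/2$ by the standard bound $w(\cdot)\ge\magn{\cdot}/2$ on the numerical radius.

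Combining these, plugging $t=2\ell\eps$ through yields $8\ell(n-1)\eps/\sigma_\ell(A)$, and the outer $3\ell^2$ from Lemma~\ref{a7} recovers $\constepso=24\ell^3(n-1)$. The boundary case $\ell=1$ sidesteps Lemma~\ref{a7} (which becomes vacuous, and where $\col(Q')$ is empty) and follows by applying the Hermitian density argument directly to $A$; the bound is trivially satisfied whenever $W(A)$ is concentrated far from the origin, which is the only regime where the two-step spread argument above would fail.
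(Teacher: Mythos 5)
Your argument follows the same route as the paper's: Lemma \ref{a7} reduces the task to anti-concentration of $q^*(A/Q')q$ for a fixed realization of $Q'$, one passes to a Hermitian part via \eqref{a20}, and then the B-spline density bound from Lemma \ref{a24} and Fact \ref{a22} closes the argument. Two places where the bookkeeping differs are worth flagging. For the lower bound on the spread $\lambda_1-\lambda_n$ of $H(e^{-i\theta}M)$, you observe $0\in W((A/Q'))$ (since $(A/Q')$ annihilates $\col(Q')$) and deduce $\lambda_1-\lambda_n\ge\magn{H(e^{-i\theta}M)}$ for every $\theta$; the paper instead fixes $\theta=\arg(u^*Mu)$ for $u$ realizing the numerical radius and runs a short sign-case analysis on $\lambda_1,\lambda_n$, which avoids assuming $0\in W(M)$ and so covers $\ell=1$ uniformly. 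For $\magn M\ge\sigma_\ell(A)$, your kernel-of-oblique-projection argument is valid, but the paper's one-liner that $M-A$ has rank $\ell-1$ (hence $\sigma_1(M)\ge\sigma_\ell(A)$ by Weyl's inequality for singular values) is more economical. Finally, your treatment of $\ell=1$ is a bit loose---``the bound is trivially satisfied whenever $W(A)$ is concentrated far from the origin'' does not address the regime where $W(A)$ partially overlaps $\ball(0,\eps)$ without containing $0$---but this is repaired by the same sign-case analysis the paper uses, so it is a minor patch rather than a structural gap.
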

\begin{proof}
By Lemma \ref{a7}, for $q\sim\semiu(n,1),Q'\sim\semiu(n,\ell-1).$
  \spliteq{}{
    \Pr\pare{\smin(Q^*AQ)\le\eps}
      &\le3\ell^2\E\Pr\pare{\abs{q^*(A/Q')q}\le2\ell\eps\,|\,Q' }
    \\&\le3\ell^2\sup_{U\in\semiu(n,\ell-1)}\Pr\pare{\abs{q^*(A/U)q}\le2\ell\eps}.
  }
Set $M=(A/U)$. There exists $u$ such that $\abs{u^*Mu}\ge\frac12\sigma_1(M)$. Let $\theta=\arg(u^*Mu)$. For any $x\in\C^n$,
\spliteq{\label{a26}}{
\abs{x^*Mx}
\ge\abs{\Re\pare{\eitc x^*Mx}}
=\abs{\frac{\eitc q^*Mq+\eit x^*M^*x}2}
=\abs{x^*\frac{\eitc M+\eit M}2x}
=\abs{x^*H\pare{\eitc M}x}}
with equality if $x=u$. Taking $x=u$ thus shows $\sigma_1\pare{H\pare{\eitc M}}\ge\frac12\sigma_1(M)$.
Since $M-A$ is rank $\ell-1$, we also have $\sigma_1(M)\ge\sigma_{\ell}(A)$. Combining these gives the bound
\spliteq{\label{a27}}{
  \Pr\pare{\smin(Q^*AQ)\le\eps}
    &\le3\ell^2\sup_{U\in\semiu(n,\ell-1)}\Pr\pare{\abs{q^*(A/U)q}\le2\ell\eps}
    \\&\le3\ell^2\sup_{U\in\semiu(n,\ell-1)}\Pr\pare{\abs{q^*H(\eitc(A/U))q}\le2\ell\eps}
    \\&\le3\ell^2\sup_{H,\,\sigma_1(H)\ge\frac12\sigma_\ell(A)}\Pr\pare{\abs{q^*Hq}\le2\ell\eps}
}
where the supremum is over all Hermitian $H$ satisfying $\sigma_1(H)\ge\frac12\sigma_\ell(A)$. From Fact \ref{a22} and Lemma \ref{a24}, the density of $q^*Hq$ is bounded by $\frac{n-1}{\lambda_1(H) - \lambda_n(H)}$. It's supported on $[\lambda_n(H),\lambda_1(H)]$, and we want to know the probability it lands in the interval $[-2\ell\eps,2\ell\eps]$. So we can integrate the density over the intersection of those intervals to obtain
\spliteq{\label{a28}}{
\Pr\pare{\abs{q^*Hq}\le2\ell\eps}
\le
\pare{\min(2\ell\eps, \lambda_1(H)) - \max(-2\ell\eps,\lambda_n(H))}
\cdot \frac{n-1}{\lambda_1(H) - \lambda_n(H)}.
}
For each fixed nonzero $H$, we must either have $\lambda_1(H)>0$ or $\lambda_n(H)<0$.
Suppose $\lambda_1(H)>0$. Thinking of $\lambda_1(H)$ as fixed, the right hand side of \eqref{a28} is maximized for $\lambda_n(H)=-2\ell\eps$. Thus one obtains
\[\Pr\pare{\abs{q^*Hq}\le2\ell\eps}
  \le4\ell\eps\cdot\frac{n-1}{\lambda_1(H)}.\]
Now suppose $\lambda_n(H)<0$. Then a symmetric calculation gives
\[\Pr\pare{\abs{q^*Hq}\le2\ell\eps}
  \le4\ell\eps\cdot\frac{n-1}{-\lambda_n(H)}.\]
When both $\lambda_n<0<\lambda_1$, both bounds apply. Note $\sigma_1(H)=\max(\lambda_1(H),-\lambda_n(H))$, so altogether this gives the unconditional bound
\[\Pr\pare{\abs{q^*Hq}\le2\ell\eps}
  =  4\ell\eps\cdot\frac{n-1}{ \sigma_1(H) }
  \le8\ell\eps\cdot\frac{n-1}{\sigma_\ell(A)}.\]
Plugging this into \eqref{a27} gives the desired result.
\end{proof}

\section{Numerical measure}\label{a10}
\newcommand{\wmax}{w_1^{\max}}
\newcommand{\wmin}{w_1^{\min}}
Section \ref{a8} obtained tail bounds on $\smin(Q^*AQ)$ by considering the numerical measure of some Hermitian matrices. The result was of the form
\[\Pr\pare{\smin(Q^*AQ)\le\eps}\le C_A\eps\]
under only the assumption that $\rank(A)\ge\ell$. We want to upgrade $\eps$ to $\eps^2$. Unfortunately, this is not always possible, e.g. when $A$ is a multiple of a Hermitian matrix. In the next section we obtain nearly the desired upgrade (i.e. up to a $\poly\log(1/\eps)$ factor) under the additional assumption that $W(A)$ has nonempty interior, and the strengthened rank assumption $\rank(A)\ge\ell+8$.

To obtain this upgrade, we need to more carefully consider the numerical measure of non-Hermitian matrices; this is the aim of this section.
Unfortunately, there are non-Hermitian matrices $M$ such that the density of $q^*Mq$ cannot be uniformly bounded as it may contain singularities. We derive sufficient conditions under which the singularities ``weak'' enough so that we still obtain strong small-ball estimates.

Subsection \ref{a29} provides a convenient bound on the density of $q^*Mq$. Subsection \ref{a30} converts this to a finite small-ball estimate.%

\subsection{Density}\label{a29}
Implicit in the observation \eqref{a20} is the following: the projection of the distribution of $q^*Mq$ onto the line $\eit\R$ rotated back to the real axis is exactly the distribution of $q^*H(\eitc M)q$. Computing this projection for every $\theta$ is called \textit{Radon transform} of the numerical measure. Remarkably, there's an inversion formula for the Randon transform involving the Hilbert transform $\H$,
\spliteq{\label{a31}}{
\H(f)(t)=\frac1\pi\textnormal{ p.v.}\int_{-\infty}^\infty\frac{f(\tau)}{t-\tau}\wrt\tau}
where $\text{p.v.}$ denotes taking the Cauchy principle value.
The result of its application is the following formula for the density appearing in \cite{b15}.
\begin{lemma}[Theorem 4.2 of \cite{b15}]\label{a32}
Fix $M\in\C^{n\times n}$ and let $q\sim\semiu(n,1)$.
Let $\rho_M(z)$ be the probability density function of $q^*Mq$
and $\rho_\theta(z)$ the probability density function of $q^*H(\eitc M)q$.
Then
\[\rho(z)=\frac1{4\pi}\textnormal{ p.v.}\int_{0}^{2\pi}\H(\rho'_\theta)(\Re(e^{-i\theta}z))\wrt\theta.\]
\end{lemma}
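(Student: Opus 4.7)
This formula is the two-dimensional Radon inversion (``filtered back-projection'') specialized to the numerical measure. My plan is to identify $\rho_\theta$ as the Radon transform of $\rho_M$, pass to Fourier space, and recognize the resulting spectral filter as $\H\circ\partial_t$. By the observation in \eqref{a20}, $q^*H(e^{-i\theta}M)q=\Re(e^{-i\theta}q^*Mq)$, so $\rho_\theta$ is the marginal of $\rho_M$ along the direction $e^{i\theta}$,
\[\rho_\theta(t)=\int_{-\infty}^\infty\rho_M\bigl(e^{i\theta}(t+is)\bigr)\,ds,\]
i.e.\ the Radon transform of $\rho_M$ at angle $\theta$.

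Next I would pass to Fourier space. Letting $\Phi(s,t)=\E[e^{-i(sX+tY)}]$ denote the Fourier transform of $\rho_M$ for $q^*Mq=X+iY$, the Fourier slice identity $\hat\rho_\theta(r)=\Phi(r\cos\theta,r\sin\theta)$ is immediate. Writing 2D Fourier inversion of $\rho_M$ in polar coordinates and symmetrizing over the involution $\theta\mapsto\theta+\pi$ (using $\hat\rho_{\theta+\pi}(r)=\hat\rho_\theta(-r)$ and $\Re(e^{-i(\theta+\pi)}z)=-\Re(e^{-i\theta}z)$), the Jacobian factor $r$ gets upgraded to $|r|$ and the radial integral extends to all of $\R$:
\[\rho_M(z)=\frac{1}{8\pi^2}\int_0^{2\pi}\int_{-\infty}^\infty|r|\,\hat\rho_\theta(r)\,e^{ir\Re(e^{-i\theta}z)}\,dr\,d\theta.\]
Since $\widehat{\H f}(r)=-i\sign(r)\hat f(r)$ and $\widehat{f'}(r)=ir\hat f(r)$, composition yields $\widehat{\H f'}(r)=|r|\,\hat f(r)$, so Fourier inversion of the inner integral produces $2\pi\,\H(\rho'_\theta)(\Re(e^{-i\theta}z))$. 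The prefactors collapse to $1/(4\pi)$, giving the claim.

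The main obstacle is regularity: $\rho_M$ can have genuine singularities, and by Lemma \ref{a24} each $\rho_\theta$ is only a B-spline of degree $n-2$, so $\rho'_\theta$ may exist only distributionally — which is why the principal-value interpretation in the statement is essential. To justify the manipulations cleanly, I would mollify $M\mapsto M+\eta G$ for an independent complex Gaussian $G$, which smooths $\rho_M$ enough that all integrals converge absolutely, prove the formula in that regime, and then pass $\eta\to 0$ using continuity in the appropriate (e.g.\ $L^1_{\mathrm{loc}}$ or distributional) sense on both sides.
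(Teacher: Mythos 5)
The paper does not prove this lemma at all — it is imported verbatim as Theorem~4.2 of \cite{b15}, and the surrounding text only gestures at the idea that $\rho_\theta$ is a Radon projection and that there is a Hilbert-transform inversion formula. So there is no in-paper proof to compare against; you have supplied one where the paper supplied a citation.

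Your derivation is correct and is the standard filtered-backprojection argument. The identification $\rho_\theta(t)=\int_\R\rho_M\bigl(e^{i\theta}(t+is)\bigr)\,ds$ is right because $q^*H(e^{-i\theta}M)q=\Re\bigl(e^{-i\theta}q^*Mq\bigr)$, the Fourier slice identity $\hat\rho_\theta(r)=\Phi(r\cos\theta,r\sin\theta)$ follows immediately, and the polar change of variables plus the $\theta\mapsto\theta+\pi$ symmetrization correctly converts the Jacobian $r$ on $[0,\infty)$ into $|r|$ on $\R$ with the extra factor $\tfrac12$, giving $\rho_M(z)=\frac{1}{8\pi^2}\int_0^{2\pi}\int_\R|r|\hat\rho_\theta(r)e^{ir\Re(e^{-i\theta}z)}\,dr\,d\theta$. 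With the paper's normalization $\H(f)(t)=\frac1\pi\,\mathrm{p.v.}\!\int f(\tau)/(t-\tau)\,d\tau$ one has $\widehat{\H f}(r)=-i\,\sign(r)\hat f(r)$ and $\widehat{f'}(r)=ir\hat f(r)$, hence $\widehat{\H(f')}(r)=|r|\hat f(r)$; inverting the inner $r$-integral therefore produces $2\pi\,\H(\rho'_\theta)\bigl(\Re(e^{-i\theta}z)\bigr)$, and the constants collapse exactly to $1/(4\pi)$ as claimed. Your closing remark on regularity is the right thing to flag: $\rho_\theta$ is a B-spline with knots at the eigenvalues of $H(e^{-i\theta}M)$, so $\rho'_\theta$ is only piecewise smooth and the Hilbert transform must be taken in the principal-value sense, and $\rho_M$ itself can fail to be bounded. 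Mollifying (e.g.\ $M\mapsto M\oplus\eta N$ as the paper itself does later, or by convolving with a small Gaussian on $\C$) and passing to the limit in the distributional sense is a clean way to justify the Fourier manipulations. The one small caveat is that you should verify, or at least state, that the limit can be taken pointwise a.e.\ in $z$ rather than merely distributionally, since the lemma asserts a pointwise identity of densities; this follows here because for fixed $M$ the left side is a piecewise-algebraic function of $z$ away from a measure-zero set, but it is worth saying.
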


Our first main contribution is bounding $\H(\rho'_\theta)$ in terms of the following simpler quantities.
\begin{definition}\label{a33}
For Hermitian $H\in\C^{n\times n}$, let
\spliteq{}{
w_1(H)&=\lambda_1(H)-\lambda_n(H)
\\
w_2(H)&=\pare{\pare{\lambda_2(H)-\lambda_n(H)}^{-1}+\pare{\lambda_1(H)-\lambda_{n-1}(H)}^{-1}}^{-1},
\\
w_3(H)&=\lambda_2(H)-\lambda_{n-1}(H).}
\end{definition}
We've already encountered $w_1(H)$, specifically in the denominator of $\wt B$ \eqref{a23} in Lemma \ref{a24}. $w_2(H)$ and $w_3(H)$ are motivated by the recursive formula for the derivative of $B$-splines, stated below.
\begin{fact}[\cite{b21}]\label{a34}
  For any $B$-spline with at least 3 knots,
  \item\[\deriv t{}B[t_1,\ldots,t_n](t)=(n-2)\pare{ \frac{B[t_1,\ldots,t_{n-1}](t)}{t_{n-1}-t_1} - \frac{B[t_2,\ldots,t_n](t)}{t_n-t_2} }\]
\end{fact}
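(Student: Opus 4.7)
The plan is to prove Fact \ref{a34} by induction on the number of knots $n \geq 3$, relying only on the defining recursion of Definition \ref{a21}. For the base case $n=3$, the spline $B[t_1,t_2,t_3]$ is the hat function equal to $(t-t_1)/(t_2-t_1)$ on $(t_1,t_2)$, equal to $(t_3-t)/(t_3-t_2)$ on $(t_2,t_3)$, and zero elsewhere. Direct differentiation yields $(t_2-t_1)^{-1}\mathbf{1}_{(t_1,t_2)} - (t_3-t_2)^{-1}\mathbf{1}_{(t_2,t_3)}$, which matches the right-hand side when $n=3$ since $\mathbf{1}_{(t_i,t_{i+1})} = B[t_i,t_{i+1}]$ and the multiplier $(n-2)=1$.

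For the inductive step, assume the claim holds for B-splines on $n-1$ knots. Differentiate the recursion
\[B[t_1,\ldots,t_n] = \frac{t-t_1}{t_{n-1}-t_1} B[t_1,\ldots,t_{n-1}] + \frac{t_n-t}{t_n-t_2} B[t_2,\ldots,t_n]\]
via the product rule. The derivatives of the two linear prefactors contribute the clean quantity $\frac{B[t_1,\ldots,t_{n-1}]}{t_{n-1}-t_1} - \frac{B[t_2,\ldots,t_n]}{t_n-t_2}$, which is exactly the target expression divided by $(n-2)$. The remaining task is to show that the two pieces involving $B'[t_1,\ldots,t_{n-1}]$ and $B'[t_2,\ldots,t_n]$, once the inductive hypothesis is applied, contribute an additional $(n-3)$ copies of the same target expression.

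To carry this out, applying the inductive hypothesis rewrites the inner derivatives in terms of the $(n-2)$-knot B-splines $B[t_1,\ldots,t_{n-2}]$, $B[t_2,\ldots,t_{n-1}]$, and $B[t_3,\ldots,t_n]$. Then one uses the defining recursion \emph{in reverse} to recombine $B[t_1,\ldots,t_{n-2}]$ with the appropriate share of $B[t_2,\ldots,t_{n-1}]$ into $B[t_1,\ldots,t_{n-1}]$, and symmetrically to recombine the rest of $B[t_2,\ldots,t_{n-1}]$ with $B[t_3,\ldots,t_n]$ into $B[t_2,\ldots,t_n]$. The middle spline $B[t_2,\ldots,t_{n-1}]$ then cancels thanks to the opposite signs it carries from the two inductive substitutions, and the surviving coefficients simplify to $(n-3)$ times $\frac{B[t_1,\ldots,t_{n-1}]}{t_{n-1}-t_1} - \frac{B[t_2,\ldots,t_n]}{t_n-t_2}$. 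Combined with the contribution from the prefactor derivatives, this yields the total factor $1 + (n-3) = n-2$.

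The main obstacle is the algebraic bookkeeping: several rational expressions in the knots $t_i$ must be combined so that the coefficients of $B[t_1,\ldots,t_{n-2}]$ and $B[t_3,\ldots,t_n]$ assemble cleanly and the central $B[t_2,\ldots,t_{n-1}]$ terms cancel exactly. This cancellation is essentially forced by the structure of the two nested recursions and is standard. An alternative route that sidesteps most of the bookkeeping is to write $B[t_1,\ldots,t_n]$ as $(t_n-t_1)$ times the $n$th divided difference of the truncated power $(\cdot - t)_+^{n-2}$, then differentiate under the divided-difference operator and recognize the result as a Leibniz-type identity; either approach concludes the proof.
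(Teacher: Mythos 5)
The paper states this as a cited \textbf{Fact} from \cite{b21} and gives no proof, so there is no in-paper argument to compare against; you have supplied a self-contained one. Your inductive proof is correct, and the critical step — that the $B[t_2,\ldots,t_{n-1}]$ terms cancel when the two inductive substitutions are recombined via the defining recursion — does go through. Concretely, after differentiating the recursion and applying the inductive hypothesis, matching coefficients of $B[t_2,\ldots,t_{n-1}]$ on the two sides reduces (after clearing the common factor $(t_{n-1}-t_2)^{-1}$) to the identity
\[
-\frac{t-t_1}{t_{n-1}-t_1}+\frac{t_n-t}{t_n-t_2}
\;=\;
\frac{t_{n-1}-t}{t_{n-1}-t_1}-\frac{t-t_2}{t_n-t_2},
\]
which is immediate since $(t-t_1)+(t_{n-1}-t)=t_{n-1}-t_1$ and $(t_n-t)+(t-t_2)=t_n-t_2$; the coefficients of $B[t_1,\ldots,t_{n-2}]$ and $B[t_3,\ldots,t_n]$ match without further work. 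One small point worth making explicit: for $n=3$ the B-spline is only piecewise linear, so the identity is understood away from the knots (or in the distributional/one-sided sense); for $n\ge 4$ both sides are continuous and the equality holds everywhere. The divided-difference route you sketch as an alternative is also standard and valid.
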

The important implication of this toward bounding $\H(\rho'_\theta)$ is the following concavity estimate for the numerical measure of Hermitian matrices.
\begin{lemma}\label{a35}
Fix Hermitian $H\in\C^{n\times n},n\ge4$ and let $q\sim\semiu(n,1)$. Let $\rho:\R\to\R$ be the density function of $q^*Hq$. Then 
\[\deriv t2\rho(t)\ge
  \begin{cases}
    -\frac{(n-1)(n-2)(n-3)}{w_1(H)w_2(H)w_3(H)} & t\in[\lambda_2,\lambda_{n-1}]
    \\
    0& t\not\in[\lambda_2,\lambda_{n-1}]
  \end{cases}.
  \]
\end{lemma}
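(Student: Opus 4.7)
The plan is to differentiate the explicit B-spline density from Lemma \ref{a24} twice using the recursive formula of Fact \ref{a34}, identify the negative terms, and show they can be grouped into a single term whose magnitude is controlled by $w_1 w_2 w_3$.

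First I would write $\rho = \frac{n-1}{w_1(H)} B[\lambda_n,\ldots,\lambda_1]$ and apply Fact \ref{a34} once to get
\[
\rho'(t) = \frac{(n-1)(n-2)}{w_1(H)}\pare{\frac{B[\lambda_n,\ldots,\lambda_2](t)}{\lambda_2-\lambda_n} - \frac{B[\lambda_{n-1},\ldots,\lambda_1](t)}{\lambda_1-\lambda_{n-1}}}.
\]
Then I would apply Fact \ref{a34} again to each of the two B-splines appearing here. This expands $\rho''(t)$ into a combination of four B-splines: $B[\lambda_n,\ldots,\lambda_3]$ and $B[\lambda_{n-2},\ldots,\lambda_1]$ enter with positive coefficients, while $B[\lambda_{n-1},\ldots,\lambda_2]$ appears twice with negative sign (once from differentiating each of the intermediate B-splines).

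Next I would collect the two negative contributions. Both are multiples of $B[\lambda_{n-1},\ldots,\lambda_2]$, with combined coefficient
\[
-\frac{(n-1)(n-2)(n-3)}{w_1(H)}\cdot\frac{B[\lambda_{n-1},\ldots,\lambda_2](t)}{\lambda_2-\lambda_{n-1}}\pare{\frac1{\lambda_2-\lambda_n}+\frac1{\lambda_1-\lambda_{n-1}}}.
\]
The sum of the reciprocals in the parenthesis is exactly $1/w_2(H)$ by Definition \ref{a33}, and $\lambda_2-\lambda_{n-1}=w_3(H)$. Using the pointwise bound $B[\lambda_{n-1},\ldots,\lambda_2](t)\le 1$ from Fact \ref{a22}, the magnitude of this negative term is at most $\frac{(n-1)(n-2)(n-3)}{w_1(H)w_2(H)w_3(H)}$. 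Since $B[\lambda_{n-1},\ldots,\lambda_2]$ is supported on $[\lambda_{n-1},\lambda_2]$, this negative contribution vanishes outside that interval, leaving only the non-negative terms and giving $\rho''(t)\ge 0$ there.

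There is no real obstacle beyond careful bookkeeping of indices and signs when applying the derivative formula twice; the key conceptual observation is simply that both of the negative contributions arising from the second differentiation involve the \emph{same} inner B-spline $B[\lambda_{n-1},\ldots,\lambda_2]$, which lets the coefficients combine into the harmonic-mean expression that defines $w_2(H)$.
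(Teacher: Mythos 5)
Your proof is correct and follows the same approach as the paper: apply Fact~\ref{a34} twice, observe that both negative contributions are multiples of the same inner B-spline $B[\lambda_{n-1},\ldots,\lambda_2]$, combine their coefficients into the harmonic-sum quantity defining $w_2(H)$, and bound that spline by $1$ using Fact~\ref{a22} (which also localizes the negativity to the spline's support). The paper presents the identical computation more tersely, first stating the bound for a generic B-spline $B[t_1,\ldots,t_n]$ and then translating to $\rho$ via the $(n-1)/w_1(H)$ scaling from Lemma~\ref{a24}, but the substance is the same.
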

\begin{proof}
By applying Fact \ref{a34} twice and using $0\le B\le 1$ from Fact \ref{a22}, one obtains
  \begin{equation}
    \deriv t2 B[t_1,\cdots,t_n](t)\ge
    \begin{cases}
      -\frac{(n-2)(n-3)}{t_{n-1}-t_2}\pare{ \frac1{t_{n-1}-t_1} + \frac1{t_n-t_2} } & t\in[t_2,t_{n-1}]
      \\
      0& t\not\in[t_2,t_{n-1}]
    \end{cases}.
  \end{equation}
  By Lemma \ref{a24}, the left hand side is exactly $\deriv t2\rho(t)$ scaled by $w_1(H)/(n-1)$ for $t_j=\lambda_{n-j+1}(H)$. Then the right hand side can be expressed in terms of $w_j(H)$ just by using their definition, Definition \ref{a33}.
\end{proof}
Some casework allows us to bound $\H(\rho_\theta')$ using this concavity estimate.
\begin{lemma}\label{a36}
Fix Hermitian $H\in\C^{n\times n},n\ge 4$ and let $q\sim\semiu(n,1)$.
Let $\rho:\R\to\R$ be the density function of $q^*Hq$. Then for $t\in[\lambda_n,\lambda_1]$,
  \[\H(\rho')(t)\le\frac1{\pi}\cdot\frac{(n-1)(n-2)(n-3)}{w_1(H)w_2(H)} \log\pare{4e\cdot\frac{w_1(H)}{w_3(H)}}.\]
\end{lemma}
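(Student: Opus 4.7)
The plan is to convert the Hilbert transform on the left-hand side into a concrete integral involving $\rho''$, and then exploit the one-sided concavity estimate from Lemma~\ref{a35}.

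First, I would integrate by parts in the principal-value integral defining $\H(\rho')(t)$, using $\frac{d}{d\tau}\log|t-\tau|=-\frac{1}{t-\tau}$. Because $n\ge 4$ makes the $B$-spline density $\rho$ at least $C^1$ with compact support $[\lambda_n,\lambda_1]$, the boundary contributions at $\pm\infty$ vanish; the contributions from the symmetric pair $\tau=t\pm\epsilon$ combine to $[\rho'(t+\epsilon)-\rho'(t-\epsilon)]\log\epsilon$, which tends to zero because $\rho'$ is Lipschitz near $t$. This yields $\H(\rho')(t)=\frac1\pi\int_{\lambda_n}^{\lambda_1}\rho''(\tau)\log|t-\tau|\,d\tau$. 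Since $\int_{\lambda_n}^{\lambda_1}\rho''(\tau)\,d\tau=\rho'(\lambda_1)-\rho'(\lambda_n)=0$, I can subtract $\log w_1$ inside the log at no cost, and for $t,\tau\in[\lambda_n,\lambda_1]$ we have $|t-\tau|\le w_1$, so $\log|t-\tau|-\log w_1\le 0$ throughout the support.

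With the concavity bound $\rho''(\tau)\ge -C\mathbf{1}_{[\lambda_{n-1},\lambda_2]}(\tau)$ from Lemma~\ref{a35}, where $C=(n-1)(n-2)(n-3)/(w_1w_2w_3)$, I split $\rho''=(\rho''+C\mathbf{1}_{[\lambda_{n-1},\lambda_2]})-C\mathbf{1}_{[\lambda_{n-1},\lambda_2]}$. The first summand is pointwise non-negative, and its product with the non-positive bracket contributes at most zero. Discarding it gives
\[\H(\rho')(t)\le\frac{C}{\pi}\int_{\lambda_{n-1}}^{\lambda_2}\log\frac{w_1}{|t-\tau|}\,d\tau.\]
Evaluating via the antiderivative $\int\log|s|\,ds=s\log|s|-s$ produces $\int_{\lambda_{n-1}}^{\lambda_2}\log|t-\tau|\,d\tau=f(t-\lambda_{n-1})+f(\lambda_2-t)-w_3$ with $f(s)=s\log|s|$. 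For $t\in[\lambda_{n-1},\lambda_2]$, set $a=t-\lambda_{n-1}$, $b=\lambda_2-t$ (so $a+b=w_3$); convexity of $x\mapsto x\log x$ and Jensen's inequality give $a\log a+b\log b\ge w_3\log(w_3/2)$. For $t$ in $[\lambda_n,\lambda_1]\setminus[\lambda_{n-1},\lambda_2]$, monotonicity of $u\mapsto(u+w_3)\log(u+w_3)-u\log u$ yields a (tighter) estimate of the same shape. Either way, $\int_{\lambda_{n-1}}^{\lambda_2}\log(w_1/|t-\tau|)\,d\tau\le w_3\log(2ew_1/w_3)\le w_3\log(4ew_1/w_3)$, and using $Cw_3/\pi=(n-1)(n-2)(n-3)/(\pi w_1w_2)$ completes the proof.

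The main obstacle is the integration-by-parts step, where one must rigorously push the derivative across the logarithmic singularity of the Hilbert kernel at $\tau=t$. The key point is that $\rho'$ is Lipschitz near $t$ (a consequence of $n\ge 4$, since the $B$-spline is $C^{n-3}$), which is precisely what drives $[\rho'(t+\epsilon)-\rho'(t-\epsilon)]\log\epsilon\to 0$ and validates the symmetric cancellation in the principal value. Everything after that is elementary calculus plus one Jensen inequality.
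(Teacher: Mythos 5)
Your proposal is correct, and it takes a genuinely different route from the paper's proof. The paper works with the definition of the principal-value integral directly: it exploits that $\H(f)(0)=0$ for even $f$ to subtract the constant function $\rho'(0)\mathbf1_{|x|\le\|H\|}$, then uses the fundamental theorem of calculus to reintroduce $\rho''$, performs a three-way case analysis depending on whether $0$ lies in $[\lambda_n,\lambda_{n-1}]$, $(\lambda_{n-1},\lambda_2)$, or $[\lambda_2,\lambda_1]$, and finally handles general $t$ by shifting to $H-tI$. You instead integrate by parts once to pass to the representation $\H(\rho')(t)=\tfrac1\pi\int\rho''(\tau)\log|t-\tau|\,d\tau$, then use $\int\rho''=0$ to replace the kernel with the non-positive function $\log(|t-\tau|/w_1)$, and the whole sign argument reduces to ``throw away the non-negative part of $\rho''+C\mathbf1_{[\lambda_{n-1},\lambda_2]}$.'' This eliminates both the case analysis and the shift step, and the remaining integral $\int_{\lambda_{n-1}}^{\lambda_2}\log(w_1/|t-\tau|)\,d\tau$ is handled in one stroke by Jensen's inequality on $s\mapsto s\log s$. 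As a bonus your argument actually yields $\log(2e\,w_1/w_3)$ rather than the paper's $\log(4e\,w_1/w_3)$, a slight sharpening that propagates harmlessly through the later estimates.

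Two technical points you correctly flagged but are worth keeping in mind. First, the justification that the boundary term $[\rho'(t+\epsilon)-\rho'(t-\epsilon)]\log\epsilon\to0$ requires $\rho'$ to be Lipschitz near $t$; this holds for $n\ge4$ with distinct eigenvalues (the $B$-spline with $n$ simple knots is $C^{n-3}$, and $\rho''$ is essentially bounded), and the degenerate cases of coincident eigenvalues can be recovered by continuity of both sides in the knots, exactly as the paper does when it extends $B$-splines to weakly increasing knots. Second, the identity $\int_{-\infty}^\infty\rho''=\rho'(\lambda_1)-\rho'(\lambda_n)=0$ uses that $\rho'$ vanishes at the endpoints of the support, which again holds because $\rho\in C^1$ with compact support when $n\ge4$; this is the precise hypothesis that licenses your ``subtract $\log w_1$ inside the logarithm at no cost'' step.
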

\begin{proof}
We start with the case of $t=0$.
Let $\lambda_n\le\ldots\le\lambda_1$ be the eigenvalues of $H$.
Then by definition of the Hilbert transform \eqref{a31},
\[
\H(\rho')(0)
=\frac1\pi\text{ p.v.}\int_{-\infty}^\infty\frac{-\rho'(\tau)}{\tau}\wrt\tau.\]
Observe that $f\mapsto \H(f)(0)$ is a linear map and $\H(f)(0)=0$ whenever $f$ is an even function. Set
\[f(x)=\rho'(0)\cdot\mathbf1_{\abs x\le\magn H}.\]
Note also that $\rho'(\tau)=0$ for $\tau$ outside of $[\lambda_n,\lambda_1]\subset[-\magn H,\magn H]$. Using those facts along with the fundamental theorem of calculus,
\spliteq{}{
 \H(\rho')(0)
  &=\H(\rho'-f)(0)
\\&=\frac1\pi\text{ p.v.}\int_{-\infty}^\infty\frac{f(\tau)-\rho'(\tau)}{\tau}\wrt\tau
\\&=\frac1\pi\text{ p.v.}\int_{-\magn H}^{\magn H}\frac{\rho'(0)-\rho'(\tau)}{\tau}\wrt\tau
\\&=\frac1\pi\text{ p.v.}\int_{-\magn H}^{\magn H}\pare{\frac1\tau\int_0^{\tau}-\rho''(x)\wrt x}\wrt\tau
.}
We can now apply the concavity estimate from Lemma \ref{a35} on $\rho$. In particular, $-\rho''$ is nonpositive when $t$ is outside of $[\lambda_{n-1},\lambda_2]$ and otherwise we have the upper bound
\[\sup_{x\in\R}(-\rho''(x))\le\frac{(n-1)(n-2)(n-3)}{w_1(H)w_2(H)w_3(H)}=:S.\]
By assumption, $0\in[\lambda_n,\lambda_1]$. We consider three cases based which of the three intervals
\[[\lambda_n,\lambda_{n-1}]\sqcup(\lambda_{n-1},\lambda_2)\sqcup [\lambda_2,\lambda_1]\]contains $0$.
The first case we consider is $\lambda_{n-1}\le0\le\lambda_2$. We integrate to the left and right of 0 separately.
Then
\spliteq{\label{a37}}{
  \int_0^{\magn H}\pare{\frac1\tau\int_0^\tau-\rho''(x)\wrt x}\wrt \tau
&=
  \int_0^{\magn H}\pare{\frac1\tau\int_0^{\min(\tau,\lambda_2)}-\rho''(x)\wrt x}\wrt \tau
  \\&=
  S\cdot\int_0^{\magn H}\frac{\min(\tau,\lambda_2)}\tau\wrt \tau
  \\&=
  S\cdot\pare{\lambda_2 + \lambda_2\log\pare{\frac{\magn H}{\lambda_2}}}
  \\&=
  S\cdot\lambda_2\log\pare{\frac{e\magn H}{\lambda_2}}
}
and
\spliteq{\label{a38}}{
  \int_{-\magn H}^0\pare{\frac1\tau\int_0^\tau-\rho''(x)\wrt x}\wrt \tau
&=
  \int_{-\magn H}^0\pare{\frac1\tau\int_0^{\max(\tau,\lambda_{n-1})}-\rho''(x)\wrt x}\wrt \tau
  \\&=
  S\cdot\int_{-\magn H}^0\frac{\max(\tau,\lambda_{n-1})}\tau\wrt \tau
  \\&=
  S\cdot\pare{-\lambda_{n-1} - \lambda_{n-1}\log\pare{\frac{\magn H}{-\lambda_{n-1}}}}
  \\&=
  S\cdot(-\lambda_{n-1})\log\pare{\frac{e\magn H}{-\lambda_{n-1}}}.
}
Note $\lambda_2+\lambda_{n-1}=w_3(H)$. So we have
\[
  \eqref{a37}+\eqref{a38}\le\sup_{ \substack{a<0<b\\ a+b=w_3(H)}  }
  S\cdot b\log\pare{\frac{e\magn H}{b}}
  +
  S\cdot(-a)\log\pare{\frac{e\magn H}{-a}}.
\]
By taking derivatives, one can see that this is maximized for $-a=b=\frac12w_3(H)$.
Since $\lambda_n\le0\le\lambda_1$, we furthermore have the bound $\magn H\le w_1(H)$.
Thus we obtain in the case of $\lambda_{n-1}\le0\le\lambda_2$ that
\spliteq{\label{a39}}{
  \H(\rho')(0)
  \le\frac S{\pi}\cdot w_3(H)\log\pare{2e\cdot \frac{w_1(H)}{w_3(H)}}
  =\frac1\pi\cdot\frac{(n-1)(n-2)(n-3)}{w_1(H)w_2(H)}\log\pare{2e\cdot\frac{w_1(H)}{w_3(H)}}.}
Now we look at the case of $\lambda_n\le 0\le \lambda_{n-1}$. Then the integral becomes
\spliteq{}{
\int_{-\magn H}^{\magn H}\pare{\frac1\tau\int_0^\tau-\rho''(x)\wrt x}\wrt\tau
  &\le\int_{\lambda_{n-1}}^{\magn H}\pare{\frac1\tau\int_{\lambda_{n-1}}^{\min(\lambda_2,\tau)}-\rho''(x)\wrt x}\wrt\tau
  \\&\le S\cdot\int_{\lambda_{n-1}}^{\magn H}\pare{\frac{\min(\lambda_2,\tau)-\lambda_{n-1}}\tau}\wrt\tau
  \\&= S\cdot\pare{
    \int_{\lambda_{n-1}}^{\lambda_2}\pare{\frac{\min(\lambda_2,\tau)-\lambda_{n-1}}\tau}\wrt\tau
    +
    \int_{\lambda_2}^{\magn H}\pare{\frac{\min(\lambda_2,\tau)-\lambda_{n-1}}\tau}\wrt\tau
  }
  \\&= S\cdot\pare{
    \int_{\lambda_{n-1}}^{\lambda_2}\pare{\frac{\tau-\lambda_{n-1}}\tau}\wrt\tau
    +
    \int_{\lambda_2}^{\magn H}\pare{\frac{\lambda_2-\lambda_{n-1}}\tau}\wrt\tau
  }
  \\&\le S\cdot\pare{
    \int_{\lambda_{n-1}}^{\lambda_2}1\wrt\tau
    +
    w_3(H)\int_{\lambda_2}^{\magn H}{\frac1\tau}\wrt\tau
  }
  \\&=S\cdot w_3(H)\log\pare{\frac{e\magn H}{\lambda_2}}
  \\&\le S\cdot w_3(H)\log\pare{\frac{e\magn H}{w_3(H)}}.
}
Note this is upper bounded by the right hand side of \eqref{a39}.
A symmetrical calculation gives exactly the same bound for $\lambda_2\le 0\le\lambda_1$, so \eqref{a39} holds in all three cases.
This finishes the proof for $t=0$. For other $t$, apply the above argument to the matrix $\wt H = H-tI$. Note that since $w_j(\wt H)=w_j(H)$, the right hand side of \eqref{a39} does not change if $H$ is replaced by $\wt H$.
Let $\wt \rho(x)=\rho(x+t)$ be the density function of $q^*\wt Hq$.
Then
\[
\H(\rho')(t)
=\frac1\pi\text{ p.v.}\int_{-\infty}^\infty\frac{\rho'(\tau)}{t-\tau}
=\frac1\pi\text{ p.v.}\int_{-\infty}^\infty\frac{\rho'(\tau+t)}{-\tau}
=\frac1\pi\text{ p.v.}\int_{-\infty}^\infty\frac{\wt\rho'(\tau)}{-\tau}
=\H(\wt\rho')(0).\]
Then apply \eqref{a39} to $\H(\wt\rho')(0)$ to obtain the final result.
\end{proof}
If we denote for concision $w_j(\theta)=w_j(H(\eitc M)),$ this results in the following density bound.
\begin{proposition}\label{a40}
Fix any $M\in\C^{n\times n},n\ge4$ and let $q\sim\semiu(n,1).$ Let $\rho_M$ be the density function of $q^*Mq$. Then
\[
\sup_{z\in\C}\rho_M(z)\le\frac{(n-1)(n-2)(n-3)}{4\pi^2}\int_0^{2\pi}\frac1{w_1(\theta)w_2(\theta)}\log\pare{4e\cdot\frac{w_1(\theta)}{w_3(\theta)}}
\]
\end{proposition}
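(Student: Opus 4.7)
The plan is to combine the Radon/Hilbert inversion formula from Lemma \ref{a32} with the pointwise bound on $\H(\rho_\theta')$ established in Lemma \ref{a36}. By Lemma \ref{a32},
\[
\rho_M(z)=\frac1{4\pi}\textnormal{ p.v.}\int_0^{2\pi}\H(\rho_\theta')\bigl(\Re(e^{-i\theta}z)\bigr)\wrt\theta,
\]
so if I can argue that the integrand is controlled by the right-hand side of Lemma \ref{a36} for every $\theta$, integrating over $[0,2\pi]$ and absorbing the $\frac1{4\pi}$ prefactor will yield the desired estimate directly.

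The bound in Lemma \ref{a36} requires the input $t=\Re(e^{-i\theta}z)$ to lie in the spectral interval $[\lambda_n(H(e^{-i\theta}M)),\lambda_1(H(e^{-i\theta}M))]$. So the first step is to reduce to $z$ in the support of $\rho_M$, since the claim is trivial for $z\notin W(M)$. For such $z$, I would use the elementary fact that $W(M)$ is convex and $\Re(e^{-i\theta}W(M))=W(H(e^{-i\theta}M))=[\lambda_n(H(e^{-i\theta}M)),\lambda_1(H(e^{-i\theta}M))]$. This places $\Re(e^{-i\theta}z)$ in exactly the interval where Lemma \ref{a36} applies, for every $\theta$.

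With the hypothesis of Lemma \ref{a36} verified uniformly in $\theta$, the remainder is mechanical: substitute the bound
\[
\H(\rho_\theta')\bigl(\Re(e^{-i\theta}z)\bigr)\le\frac1\pi\cdot\frac{(n-1)(n-2)(n-3)}{w_1(\theta)w_2(\theta)}\log\pare{4e\cdot\frac{w_1(\theta)}{w_3(\theta)}}
\]
into the inversion formula, pull the $\theta$-independent factor $\frac{(n-1)(n-2)(n-3)}{4\pi^2}$ outside the integral, and take the supremum over $z\in\C$.

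There is no real obstacle here beyond the geometric verification that $\Re(e^{-i\theta}z)\in[\lambda_n,\lambda_1]$ of $H(e^{-i\theta}M)$, which is immediate from the identification $W(H(e^{-i\theta}M))=\Re(e^{-i\theta}W(M))$. The heavy lifting has been done in Lemma \ref{a36}; Proposition \ref{a40} is essentially the act of rotating that one-dimensional estimate through all angles and averaging, as dictated by the Radon inversion structure.
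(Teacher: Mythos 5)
Your proposal is correct and follows essentially the same route as the paper: apply the Radon/Hilbert inversion formula from Lemma \ref{a32}, observe that $z\in W(M)$ forces $\Re(e^{-i\theta}z)$ into the spectral interval of $H(e^{-i\theta}M)$ for every $\theta$ (the paper phrases this as the contrapositive, noting that $W(M)$ sits inside each strip, but the content is the same), then invoke Lemma \ref{a36} uniformly in $\theta$ and integrate. No gaps.
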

\begin{proof}
Set $H(\theta)=H(\eitc M)$ and let $\rho_\theta$ be the density of $q^*H(\theta)q$. We first apply Lemma \ref{a24},
\[\rho_M(z)=\frac1{4\pi}\int_0^{2\pi}\H(\rho_\theta')(\Re(\eitc z))\wrt\theta.\]
Note for each $\theta$ that $W(M)$ is contained in the strip
\[W(M)\subset\set{z\in\C:\lambda_n(H(\theta))\le\Re(\eitc z)\le\lambda_1(H(\theta))}.\]
If there is any $\theta$ for which
\[\Re(\eitc z)\not\in\sqbrac{\lambda_n(H(\theta)), \lambda_1(H(\theta))},\]
then $z\not\in W(M)$ and so $\rho_M(z)=0$.
We can therefore assume this does not occur, so the requirement of Lemma \ref{a36} applies to $H(\theta)$ for every $\theta$, immediately giving the required bound on $\H(\rho_\theta').$
\end{proof}

\subsection{Small-ball probability}\label{a30}
This section converts the density $\rho_M(z)$ bound on $q^*Mq$ into a small-ball probability bound on $q^*Mq$. One way to turn densities into small-ball estimates is to integrate a density bound over the relevant region on $\C$. Doing this with Proposition \ref{a40} isn't strong enough because there are matrices for which the bound is infinity.
We address this issue by making the following modification.
Let \[N=\bmat{0&2\\0&0}.\]
Throughout this section, we fix a particular $M\in\C^{n\times n}$ and set
\[M'=M\oplus \eps N\oplus \eps N\]
where $\oplus$ denotes the direct sum. We also let $q\sim\semiu(n,1)$ and $q'\sim\semiu(n+4,1)$.
Replacing $M$ with $M'$ provides enough regularization to produce a bound that's always finite. We will obtain a small-ball probability by passing to the density of $q'^*M'q'$ instead of $q^*Mq$. When we apply Proposition \ref{a40}, this density will be in terms of $w_j(\cdot)$, so for concision we denote
\[H(\theta)=H\pare{\eitc M'}\qand w_j(\theta)=w_j(H(\theta))\]
for the remainder of this section.
The first lemma of this subsection shows how to pass from $M$ to $M'$.
\begin{lemma}\label{a41}
Let $\rho_{M'}$ be the probability density function of $q'^*M'q'$. Then
\[\Pr\pare{\abs{q^*Mq}\le\eps}\le\pi\eps^2\sup_{z\in\C}\rho_{M'}(z).\]
\end{lemma}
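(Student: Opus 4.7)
The plan is a coupling argument: I parameterize $q'$ so that a copy of $q^*Mq$ (in distribution) appears as a sub-expression of $q'^*M'q'$, and show that the remaining contribution cannot push the modulus past $\eps$. Write $q' = (x, \tilde y) \in \C^n \times \C^4$ and set $r = \magn{x}$, $\hat x = x/r$, $\hat y = \tilde y/\sqrt{1-r^2}$. By the standard Gaussian-to-sphere construction, $\hat x \sim \semiu(n,1)$, $\hat y \sim \semiu(4,1)$, and $r$ are mutually independent. Block-diagonality of $M' = M \oplus \eps N \oplus \eps N$ yields
\[q'^*M'q' = r^2\,\hat x^* M \hat x + (1-r^2)\eps\,\hat y^*(N\oplus N)\hat y.\]

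Next I would verify that the numerical range of $N \oplus N$ is contained in the closed unit disk. Writing $\hat y = (u, v)$ with $u, v \in \C^2$, direct computation gives $\hat y^*(N\oplus N)\hat y = 2\overline{u_1}u_2 + 2\overline{v_1}v_2$, whose modulus is at most $(\abs{u_1}^2 + \abs{u_2}^2) + (\abs{v_1}^2 + \abs{v_2}^2) = \magn{\hat y}^2 = 1$ by AM-GM. Consequently, on the event $\set{\abs{\hat x^* M \hat x} \le \eps}$,
\[\abs{q'^*M'q'} \le r^2\eps + (1-r^2)\eps\cdot 1 = \eps.\]
Since $\hat x$ is equal in law to $q$, this event-containment passes to probabilities, giving
\[\Pr\pare{\abs{q^*Mq}\le\eps} \;=\; \Pr\pare{\abs{\hat x^*M\hat x}\le\eps} \;\le\; \Pr\pare{\abs{q'^*M'q'}\le\eps} \;=\; \int_{\abs z \le \eps}\rho_{M'}(z)\,dA(z) \;\le\; \pi\eps^2\sup_{z\in\C}\rho_{M'}(z).\]

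There is no real obstacle here; the argument is a coupling followed by a trivial area-times-density bound. The only subtlety is the choice of $M'$: this coupling step would work equally well with a single copy of $\eps N$, so the two copies must be there to satisfy the quite separate requirement (imposed by Proposition \ref{a40}) that $\sup_z \rho_{M'}(z)$ actually be finite, which needs both $w_2(\theta)$ and $w_3(\theta)$ of the Hermitian parts $H(e^{-i\theta}M')$ to be strictly positive for every $\theta$.
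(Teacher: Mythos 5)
Your proof is correct and follows the same coupling strategy as the paper's: decompose $q'$ to expose a copy of $q^*Mq$, show the remaining contribution lives in $\ball(0,\eps)$, and finish with the trivial area-times-density bound. The only cosmetic difference is that the paper splits the $\C^4$ block into two independent $\semiu(2,1)$ pieces and invokes convexity of $W(\eps N)$, while you keep $\hat y\in\C^4$ intact and bound $\hat y^*(N\oplus N)\hat y$ directly by AM-GM — both routes give $W(\eps(N\oplus N))\subset\ball(0,\eps)$ and hence the same conclusion.
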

\begin{proof}
We may couple $q,q'$ via
\[q'=\bmat{aq\\br\\cs}\]
where $r,s\sim\semiu(2,1)$, and $a,b,c$ are positive real random variables such that $a^2+b^2+c^2=1$. Then
\[q'^*M'q'=a^2q^*Mq + b^2r^*(\eps N)r + c^2s^*(\eps N)s\]
is a convex combination of $q^*Mq,r^*(\eps N)r,s^*(\eps N)s$.
The latter two terms are elements of $W(\eps N)=\ball(0,\eps)$, which is convex.
Thus if $q^*Mq\in\ball(0,\eps)$ then $q'^*M'q\in\ball(0,\eps)$ as well, i.e.
\[\Pr\pare{\abs{q^*Mq}\le\eps}\le\Pr\pare{\abs{q'^*M'q'}\le\eps}=\int_{\abs z\le\eps}\rho_{M'}(z)\wrt z\le\pi\eps^2\sup_{\abs z\le\eps}\rho_{M'}(z).\]
\end{proof}
When we apply Proposition \ref{a40}, we will apply Holder's inequality to the integral so that the factors we must upper bound are
\[
\text{``}L^\infty\text{ factor'' }=
\sup_{\theta\in[0,2\pi)}\log\pare{4e\cdot\frac{w_1(\theta)}{w_3(\theta)}}
\qand
\text{``}L^1\text{ factor'' }=
\int_0^{2\pi}\frac1{w_1(\theta)w_2(\theta)}\wrt\theta.\]
Controlling the $L^\infty$ factor can be done with a short argument.
\begin{lemma}\label{a42}
    \[\sup_{\theta\in[0,2\pi)}\frac{w_1(\theta)}{w_3(\theta)}\le\frac{\magn{M'}}\eps.\]
\end{lemma}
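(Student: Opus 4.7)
The plan is to exploit the structure $M' = M \oplus \varepsilon N \oplus \varepsilon N$ to force a pair of eigenvalues of $H(\theta)$ to lie above $\varepsilon$ and another pair to lie below $-\varepsilon$, which directly controls $w_3(\theta)$ from below, while the numerator $w_1(\theta)$ can only be as large as $2\|M'\|$.

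First I would observe that because the direct sum commutes with the Hermitian part and with scalar multiplication,
\[
H(\theta) \;=\; H(e^{-i\theta} M) \,\oplus\, H(e^{-i\theta}\varepsilon N) \,\oplus\, H(e^{-i\theta}\varepsilon N),
\]
so the spectrum of $H(\theta)$ is the multiset union of the spectra of the three summands. A direct computation with $N = \bigl[\begin{smallmatrix}0 & 2 \\ 0 & 0\end{smallmatrix}\bigr]$ gives
\[
H(e^{-i\theta}\varepsilon N) \;=\; \varepsilon\begin{bmatrix} 0 & e^{-i\theta} \\ e^{i\theta} & 0 \end{bmatrix},
\]
whose eigenvalues are $\pm\varepsilon$ regardless of $\theta$. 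Hence $+\varepsilon$ and $-\varepsilon$ each appear with multiplicity at least two in the spectrum of $H(\theta)$.

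From this multiplicity statement, writing $N' = n+4$ for the size of $M'$, we get $\lambda_1(\theta) \ge \lambda_2(\theta) \ge \varepsilon$ and $\lambda_{N'}(\theta) \le \lambda_{N'-1}(\theta) \le -\varepsilon$. Therefore
\[
w_3(\theta) \;=\; \lambda_2(\theta) - \lambda_{N'-1}(\theta) \;\ge\; \varepsilon - (-\varepsilon) \;=\; 2\varepsilon.
\]
On the other hand, since the Hermitian part satisfies $\|H(e^{-i\theta} M')\| \le \|M'\|$, every eigenvalue of $H(\theta)$ lies in $[-\|M'\|, \|M'\|]$, so
\[
w_1(\theta) \;=\; \lambda_1(\theta) - \lambda_{N'}(\theta) \;\le\; 2\|M'\|.
\]
Combining the two estimates gives $w_1(\theta)/w_3(\theta) \le \|M'\|/\varepsilon$ uniformly in $\theta$, which is the claim.

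There is no real obstacle here beyond bookkeeping: the proof is essentially a one-line consequence of the spectral effect of the two padding blocks $\varepsilon N$. The only subtlety worth double-checking is that the two copies are independent summands (not a single rank-one addition), which is what guarantees multiplicity at least two for each of $\pm\varepsilon$ and hence the simultaneous bounds on $\lambda_2(\theta)$ and $\lambda_{N'-1}(\theta)$.
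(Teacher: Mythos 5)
Your argument is correct and essentially identical to the paper's: both bound $w_1(\theta)\le 2\|H(\theta)\|\le 2\|M'\|$ and use the direct-sum structure to plant two copies each of $\pm\varepsilon$ in the spectrum of $H(\theta)$, giving $\lambda_2\ge\varepsilon$ and $\lambda_{n-1}\le-\varepsilon$ and hence $w_3(\theta)\ge 2\varepsilon$. Your explicit remark that two padding blocks (not one) are what guarantee the needed multiplicity is exactly the point the paper relies on implicitly.
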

\begin{proof}
Fix any $\theta$.
By definition of $w_1(\cdot)$, we have by the triangle inequality
\spliteq{}{
w_1(\theta)
=\lambda_1(H(\theta))-\lambda_n(H(\theta))
\le2\magn{H(\theta)}.}
The norm $\magn{H(\theta)}$ is bounded via
\[
\magn{H(\theta)}
\le\sup_{\magn x=1}x^*H(\eitc M')x
\le\sup_{\magn x=1}\Re(\eitc x^*M'x)
\le\sup_{\magn x=1}\abs{x^*M'x}
\le\magn{M'}\]
so $w_1(\theta)\le2\magn{M'}$. Next, the spectrum of
\[H(\theta)=H(\eitc M)\oplus H(\eitc \eps N)\oplus H(\eitc \eps N)\]
contains two copies of spectrum of $H(\eitc \eps N)$, which is exactly the set $\set{-\eps,\eps}$. Therefore,
\[
\lambda_{n-1}(H(\eitc M'))\le-\eps\qand\eps\le\lambda_2(H(\eitc M'))\quad\implies\quad
w_3(H(\eitc M'))\ge2\eps.\]
The result follows by taking the quotient of $2\magn{M'}$ and $2\eps$.
\end{proof}
For the $L^1$ factor, more work is required. We introduce the following functional which will serve as an intermediate.
\spliteq{\label{a43}}{
\phi(M')&=\sup_{U\in\semiu(n+4,2)}\Phi(U^*M'U),\\
\Phi\pare{\bmat{a&b\\c&d}}&=\abs{ad-bc}^2-\abs{ \Re(\overline ad) - \frac{\abs b^2}2 - \frac{\abs c^2}2 }^2.}
We bound the $L^1$ factor in terms of $\phi(M')$.
\begin{lemma}\label{a44}
\[
\int_0^{2\pi}\frac1{w_1(\theta)w_2(\theta)}\wrt\theta
\le
\frac{4\pi+16\log\pare{{12}^{1/4}\cdot\magn{M'}/\eps}}{\abs{\phi(M')}^{1/2}}.\]
\end{lemma}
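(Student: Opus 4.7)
The plan is to reduce the integral to one involving the eigenvalue gap of a well-chosen $2 \times 2$ compression of $M'$, and then evaluate that integral in closed form. Pick $U \in \semiu(n+4, 2)$ attaining (or nearly attaining) the supremum $\phi(M') = \Phi(U^*M'U)$, set $X = U^*M'U$, and let $\tilde\mu_1(\theta) \ge \tilde\mu_2(\theta)$ be the eigenvalues of the $2 \times 2$ Hermitian matrix $U^* H(\theta) U = H(e^{-i\theta} X)$. An elementary $\tr^2 - 4\det$ computation gives
\[
(\tilde\mu_1(\theta) - \tilde\mu_2(\theta))^2 = A + 2\Re(e^{-2i\theta} B),
\]
where $A = \tfrac12 \abs{a-d}^2 + \abs b^2 + \abs c^2$ and $B = bc + \tfrac14 (a-d)^2$ are determined by the entries of $X$. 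Direct expansion shows $A^2 - 4\abs B^2 = 4\abs{\Phi(X - \tfrac{\tr X}2 I)}$, and after bounding the shift discrepancy $\abs{\Phi(X) - \Phi(X - \tfrac{\tr X}2 I)}$ using entry-wise bounds (which are controlled by $\magn{M'}$), one obtains $A^2 - 4\abs B^2 \ge c \abs{\phi(M')}$ for an absolute constant $c$.

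The classical trigonometric identity
\[
\int_0^{2\pi} \frac{\wrt\theta}{A + 2\Re(e^{-2i\theta}B)} = \frac{2\pi}{\sqrt{A^2 - 4\abs B^2}},
\]
valid because nonnegativity of $(\tilde\mu_1 - \tilde\mu_2)^2$ forces $A \ge 2\abs B$, then yields the leading $4\pi / \abs{\phi(M')}^{1/2}$ contribution to the bound. To compare $1/(w_1 w_2)$ with $1/(\tilde\mu_1 - \tilde\mu_2)^2$, I will use the algebraic identity $w_1 w_2 = w_1(\lambda_2 - \lambda_n)(\lambda_1 - \lambda_{n-1}) / (w_1 + w_3)$ with $w_3 \le w_1$ to obtain $1/(w_1 w_2) \le 2 / ((\lambda_2 - \lambda_n)(\lambda_1 - \lambda_{n-1}))$, and then Cauchy interlacing applied to the $2$-dimensional compression ($\tilde\mu_1 \le \lambda_1, \tilde\mu_2 \ge \lambda_n, \tilde\mu_1 \ge \lambda_{n-1}, \tilde\mu_2 \le \lambda_2$) to dominate this pointwise by $C/(\tilde\mu_1 - \tilde\mu_2)^2$ on a generic set of $\theta$.

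On the complementary set of bad angles -- where $(\tilde\mu_1 - \tilde\mu_2)^2$ collapses to near its minimum $A - 2\abs B$ -- the regularization in $M' = M \oplus \eps N \oplus \eps N$ saves the bound: as in the proof of Lemma \ref{a42}, the spectrum of $H(e^{-i\theta}(\eps N))$ is $\{-\eps, \eps\}$, so $H(\theta)$ has $\pm\eps$ as eigenvalues with multiplicity two, giving $\lambda_2(\theta) \ge \eps$, $\lambda_{n-1}(\theta) \le -\eps$, and hence $w_2(\theta) \ge \eps$. Combined with $w_1(\theta) \le 2\magn{M'}$, this gives the crude bound $1/(w_1 w_2) \le 1/(\eps w_1)$ on the bad arc, and a logarithmic integration over that arc (whose angular length is controlled by $\abs{\phi(M')}^{1/2}/\magn{M'}$ from the geometry of the trigonometric denominator near its minimum) contributes the $16 \log({12}^{1/4} \magn{M'}/\eps) / \abs{\phi(M')}^{1/2}$ term. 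The main obstacle is making the identification $A^2 - 4\abs B^2 \ge c \abs{\phi(M')}$ sharp, since $\Phi$ is not shift-invariant while the eigenvalue gap is; this forces a careful handling of the trace correction using $\magn{M'}$-dependent bounds while keeping constants tight enough to match the stated $12^{1/4}$.
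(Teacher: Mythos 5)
Your overall strategy — compress to $2\times 2$, recognize a trigonometric form in $\theta$, and bound by a closed-form trig integral — is the same as the paper's, but you key the argument on the wrong scalar invariant of $U^*H(\theta)U$. The paper lower-bounds $w_1(\theta)w_2(\theta)$ by the \emph{determinant} $\tilde\mu_1\tilde\mu_2 = \det(U^*H(\theta)U)$, using the chain $2w_1w_2 \ge \sigma_1(H(\theta))\sigma_2(H(\theta)) \ge |\det(U^*H(\theta)U)|$ which follows pointwise for every $\theta$ from singular-value interlacing. You instead try to compare $1/(w_1w_2)$ to $1/(\tilde\mu_1-\tilde\mu_2)^2$. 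The algebraic rewriting $1/(w_1w_2)\le 2/((\lambda_2-\lambda_n)(\lambda_1-\lambda_{n-1}))$ is fine, but the next step ``dominate this pointwise by $C/(\tilde\mu_1-\tilde\mu_2)^2$'' is not supported by interlacing: interlacing pins $\tilde\mu_1\in[\lambda_{n-1},\lambda_1]$ and $\tilde\mu_2\in[\lambda_n,\lambda_2]$, and $\lambda_2-\lambda_n$ can vanish while $\tilde\mu_1-\tilde\mu_2$ stays of order $\lambda_1-\lambda_n$, so no absolute constant $C$ works, even ``generically.'' There is no analogue of the clean product bound $\sigma_1\sigma_2\ge|\tilde\mu_1\tilde\mu_2|$ for the gap.

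The second and more decisive gap is the claim $A^2-4|B|^2 \ge c\,|\phi(M')|$ for an absolute constant $c$. You correctly derive $A^2-4|B|^2 = 4|\Phi(X-\tfrac{\tr X}{2}I)|$, but $\Phi$ is emphatically not shift-invariant and the shift by $\tfrac{\tr X}{2}$ is \emph{determined by} $X$, so there is no uniform ``shift discrepancy'' controlled by $\|M'\|$ alone. Concretely, for $X = \bmat{i&1\\1&i}$ one has $\Phi(X) = |i\cdot i - 1|^2 - |\Re(\bar i\, i) - 1|^2 = 4 - 0 = 4$, while $X - \tfrac{\tr X}{2}I = \bmat{0&1\\1&0}$ gives $\Phi = 1 - 1 = 0$, so $A^2 - 4|B|^2 = 0$ even though $\Phi(X) = 4$. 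This kills the proposed route: the amplitude-offset combination of the \emph{gap} trig function is an intrinsically different functional than $\phi$. The paper's choice of $\det$ is precisely what makes the algebra come out right — writing $\det(H(e^{-i\theta}X)) \propto \alpha\cos(2\theta-\theta_0)+\beta$ with $\alpha = |ad-bc|$ and $\beta = \Re(\bar a d) - \tfrac{|b|^2+|c|^2}{2}$, the combination $\alpha^2-\beta^2$ is exactly $\Phi(X) = \phi(M')$ with no shift correction, so the trig integral (Lemma~\ref{a45}) produces $|\phi(M')|^{-1/2}$ directly. Your heuristic that this is ``the main obstacle'' is right; what you underestimate is that it is not a constant-chasing issue but a structural mismatch that rules out the gap-based route.
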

\begin{proof}
For concision, we drop the explicit dependence on $\theta$ and let $\lambda_j$ be the eigenvalues of $H(\theta)$ and $\sigma_1,\sigma_2$ be the largest two singular values of $H(\theta)$.
Then
\spliteq{}{
w_1(\theta)w_2(\theta)
&=
(\lambda_1-\lambda_n)
\cdot
\pare{
(\lambda_1-\lambda_{n-1})^{-1}
+
(\lambda_2-\lambda_n)^{-1}
}^{-1}
\\&\ge\frac12\cdot(\lambda_1-\lambda_n)
\cdot\min\pare{
\lambda_1-\lambda_{n-1},\,
\lambda_2-\lambda_n}
\\&\ge\frac12\cdot\max(\abs{\lambda_1},\abs{\lambda_n})
\cdot\min\pare{
\lambda_1-\lambda_{n-1},\,
\lambda_2-\lambda_n}
\\&=\frac12\cdot\sigma_1
\cdot\min\pare{
\lambda_1-\lambda_{n-1},\,
\lambda_2-\lambda_n}
.}
By construction of $M'$, we have $\lambda_{n-1}<0<\lambda_2$ so
\[
\min\pare{
\lambda_1-\lambda_{n-1},\,
\lambda_2-\lambda_n
}\ge\max\pare{
\abs{\lambda_2},
\abs{\lambda_{n-1}},
\min(\abs{\lambda_1},\abs{\lambda_n})}=\sigma_2.\]
For any $U\in\semiu(n+4,2)$, by interlacing, $\sigma_1,\sigma_2$ dominate the singular values of $U^*H(\theta)U$, the product of which is simply the absolute determinant. That is,
\[2w_1(\theta)w_2(\theta)\ge\sup_{U\in\semiu(n+4,2)}\abs{\det(U^*H(\theta)U)}=\sup_{U\in\semiu(n+4,2)}\abs{\det(H(\eitc U^*M'U))}.\]
By taking $U=\bmat{e_{n+1}&e_{n+2}}$, one has \(U^*M'U=\eps N\) so $2w_1(\theta)w_2(\theta)\ge\eps^2$. Now take $U$ so that $\Phi(U^*M'U)=\phi(M')$. Set
\[\bmat{a&b\\c&d}=U^*M'U.\]
Then there exists some angle $\theta_0$ such that
\begin{align*}
2w_1(\theta)w_2(\theta)\ge\abs{\det\pare{H\pare{e^{-i\theta} \bmat{a&b\\c&d} }}}
  &=\abs{
\Re\pare{e^{-2i\theta}(ad-bc)}+\Re\pare{\conj ad}
-\frac{\abs{b}^2}2
-\frac{\abs{c}^2}2}
\\&=\abs{
\abs{ad-bc}\cos(2\theta-\theta_0)+\Re\pare{\conj ad}-\frac{\abs{b}^2}2-\frac{\abs{c}^2}2}.
\end{align*}
Set $\alpha=\abs{ad-bc}$ and $\beta=\Re\pare{\conj ad}-\frac{\abs{b}^2}2-\frac{\abs{c}^2}2$ so that $\phi(M')=\alpha^2-\beta^2$. 
Then\[
\int_0^{2\pi}\frac1{w_1(\theta)w_2(\theta)}\wrt\theta
\le
\int_0^{2\pi}\frac1{\max\pare{\eps^2,\abs{\beta+\alpha\cos(2\theta-\theta_0)}}}\wrt\theta.\]
This integral calculation is performed in Lemma \ref{a45} in the appendix and its result is
\[
\int_0^{2\pi}\frac1{w_1(\theta)w_2(\theta)}\wrt\theta
\le
\frac{4\pi+16\log\pare{\sqrt2\cdot{\abs{\phi(M')}^{1/4}}/{\eps}}}{\abs{\phi(M')}^{1/2}}
.\]
Finally, $\phi(M')\le 3\magn{U^*M'U}^4\le3\magn{M'}^4$.
\end{proof}

Our next goal is to compute some $U\in\semiu(n+4,2)$ such that $\Phi(U^*MU)$ is lower bounded. To do so, we need the following technical lemma about the curvature of a cone.

\begin{lemma}\label{a46}
Let $f(x,y,z)=x^2+y^2-z^2$.
Say $(x,y),(x',y')\in\R^2$ are the vertices of a non-obtuse isosceles triangle with apex at the origin. Let $d$ be the length of it's base.
Then for any $z,z'\in\R$ there exists a point $u\in\R^3$ on the line segment connecting $(x,y,z),(x',y',z')$ with\[\abs{f(u)}\ge\frac{d^2}8.\]
\end{lemma}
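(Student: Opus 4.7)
The plan is to show that at least one of the two endpoints or the midpoint of the segment witnesses $|f(u)|\ge d^2/8$, by a short argument by contradiction.

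I would first introduce coordinates adapted to the isosceles triangle. Write $p=(x,y)$, $p'=(x',y')$ and set $m=(p+p')/2$, $v=p-p'$. The isosceles condition $\magn p=\magn{p'}$ is equivalent to $m\perp v$; writing $M=\magn m$ gives $\magn p^2=\magn{p'}^2=M^2+d^2/4$. The non-obtuse condition at the apex (the angle $\angle pOp'\le\pi/2$) becomes, by the law of cosines, $M\ge d/2$. Parametrizing the segment by $u(s)=(m-sv/2,\,z_0+sw/2)$ for $s\in[-1,1]$, where $z_0=(z+z')/2$ and $w=z'-z$, and expanding $f(u(s))$ using $m\perp v$, gives three key values
\[
f(u(-1))=M^2+d^2/4-z^2,\quad f(u(0))=M^2-z_0^2,\quad f(u(1))=M^2+d^2/4-z'^2.
\]

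The main step is a contradiction argument: suppose all three values have absolute value $<d^2/8$. The first and third bounds translate to $z^2,z'^2\in(M^2+d^2/8,\,M^2+3d^2/8)$, while the second gives $z_0^2\in(M^2-d^2/8,\,M^2+d^2/8)$. Next, I would apply the identity $zz'=2z_0^2-(z^2+z'^2)/2$ (from expanding $(z+z')^2=4z_0^2$) to conclude $zz'\in(M^2-5d^2/8,\,M^2+d^2/8)$. On the other hand, $(zz')^2=z^2z'^2$ is a product of two positive numbers each strictly greater than $M^2+d^2/8$, so $|zz'|>M^2+d^2/8$. Since $zz'<M^2+d^2/8$, this forces $zz'<-(M^2+d^2/8)$; combining with $zz'>M^2-5d^2/8$ yields $2M^2<d^2/2$, i.e., $M^2<d^2/4$, contradicting the non-obtuse condition $M\ge d/2$.

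The main obstacle is identifying the right three sample points together with the algebraic identity binding them. The midpoint $s=0$ is crucial because it lands at the projected point $m$, making $f(u(0))=M^2-z_0^2$ drop the $d^2/4$ offset present at the endpoints; this coordinated control over $z^2$, $z'^2$, and $z_0^2$ (together with the identity for $zz'$) is what converts the hypothetical bounds on $|f|$ into the clean inequality $M^2<d^2/4$. The rest is arithmetic bookkeeping on intervals.
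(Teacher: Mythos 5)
Your proposal is correct and works with the same three test points (the two endpoints and the midpoint of the segment) and the same proof-by-contradiction scaffold as the paper, but it closes the argument differently. The paper converts the three hypothesized bounds $\abs{f}<d^2/8$ into six linear constraints on the apex-to-midpoint distance squared, then runs a longer chain: it first shows $\abs{z_+-z_-}$ must be small, then shows $\abs{z_\pm}$ must be large, concludes $z_+$ and $z_-$ have the same sign, and finally contradicts $\min(z_+,z_-)^2\le\bigl(\tfrac{z_++z_-}{2}\bigr)^2$. You instead feed the three interval constraints through the identity $zz'=2z_0^2-\tfrac{z^2+z'^2}{2}$ to pin $zz'\in(M^2-5d^2/8,\,M^2+d^2/8)$, while the lower bounds $z^2,z'^2>M^2+d^2/8$ force $\abs{zz'}>M^2+d^2/8$; the only way to reconcile these is $zz'<-(M^2+d^2/8)$, which immediately gives $M^2<d^2/4$ and contradicts non-obtuseness. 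This bypasses the sign casework entirely and is a cleaner way to land the same punch. One minor quibble: the non-obtuse constraint $M\ge d/2$ falls out most directly from $0\le p\cdot p' = M^2-d^2/4$ rather than the law of cosines per se, but the conclusion is the same.
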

\begin{proof}
By rotating $(x,y,z),(x',y',z')$ about the $z$-axis, we may assume they have the form
\(\pare{x,\pm\frac d2,z_\pm}.\)
The midpoint between these endpoints is $\pare{x,0,\frac{z_++z_-}2}$. Suppose
\[\abs{f\pare{x,\pm\frac d2,z_\pm}}<\frac{d^2}8\qand \abs{f\pare{x,0,\frac{z_++z_-}2}}<\frac{d^2}8.\]
Then there are six linear constraints on $x^2$,
\[
\begin{rcases}
    z_+^2-\pare{\frac d2}^2-\frac{d^2}8
    \\
    z_-^2-\pare{\frac d2}^2-\frac{d^2}8
    \\
    \pare{\frac{z_++z_-}2}^2-\frac{d^2}8
\end{rcases}
<x^2<
\begin{lcases}
    z_+^2-\pare{\frac d2}^2+\frac{d^2}8
    \\
    z_-^2-\pare{\frac d2}^2+\frac{d^2}8
    \\
    \pare{\frac{z_++z_-}2}^2+\frac{d^2}8
\end{lcases}.\]
Combining each of the first two lower bounds with the first two upper bounds gives
\[
z_\pm^2-\frac{d^2}8<z_\pm^2+\frac{d^2}8
\implies
\abs{\frac{z_++z_-}2}<\frac{d^2/8}{\abs{z_+-z_-}}.\]
Combining this with the third upper bound gives
\[
x^2
<\pare{\frac{z_++z_-}2}^2+\frac{d^2}8
<\pare{\frac{d^2/8}{\abs{z_+-z_-}}}^2+\frac{d^2}8
\]
The isosceles condition ensures $\frac{d^2}4\le x_0^2$, so we have
\[
\frac{d^2}{8}<\pare{\frac{d^2/8}{\abs{z_+-z_-}}}^2
\implies 
\abs{z_+-z_-}<\frac d{\sqrt 8}.\]
Combining the isosceles condition with the first two upper bounds results in
\[
\frac{d^2}4<z_\pm^2-\frac{d^2}{8}
\implies
\frac{d}{\sqrt{8/3}}<\abs{z_\pm}.\]
In particular, we conclude that $z_+$ and $z_-$ must have the same sign. Without loss of generality, say they are both positive. That means
\[
\min(z_+,z_-)^2
\le
\pare{\frac{z_++z_-}2}^2.
\]
Combine the first two upper bounds on $x^2$ with the third lower bound on $x^2$. Then
\[
\pare{\frac{z_++z_-}2}^2-\frac{d^2}8
<\min(z_+,z_-)^2-\frac{d^2}{8}
\le\pare{\frac{z_++z_-}2}^2-\frac{d^2}{8}
\]
which is a contradiction.
\end{proof}

We're now ready for the main bound on $\phi(\cdot)$.
\begin{lemma}\label{a47}
\[\abs{\phi(M')}^{1/2}\ge\frac{\sigma_9(M')\inR(W(M'))}4.\]
where $\inR(\cdot)$ is the inner radius.
\end{lemma}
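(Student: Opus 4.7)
My plan relies on the observation that $\Phi(X)=|\det X|^{2}-\beta(X)^{2}$ equals the function $f(x,y,z)=x^{2}+y^{2}-z^{2}$ from Lemma~\ref{a46}, evaluated at the point $(\Re\det X,\Im\det X,\beta(X))\in\R^{3}$. The rewriting $\beta(X)=(|\tr X|^{2}-\|X\|_{F}^{2})/2$ makes this transparent: $|\tr X|^{2}-\|X\|_{F}^{2}=2\Re(\bar a d)-|b|^{2}-|c|^{2}$ recovers the formula for $\beta$ in the definition of $\Phi$ from \eqref{a43}. Writing $r=\inR(W(M'))$ and $s=\sigma_{9}(M')$, it therefore suffices to exhibit a single $U^{\star}\in\semiu(n+4,2)$ with $|\Phi(U^{\star *}M'U^{\star})|\ge s^{2}r^{2}/16$ and pass to the square root.

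First I would construct two orthonormal $2$-frames $U_{0},U_{1}$ with orthogonal column spaces, so that the interpolation $U(t)=\cos t\,U_{0}+\sin t\,U_{1}$ remains in $\semiu(n+4,2)$. Picking unit vectors $q_{0},q_{1}$ realizing two nearly antipodal points on the disk of radius $r$ inscribed in $W(M')$ controls the diagonal contribution to $\det X_{k}$, while taking the second column of each $U_{k}$ from the $9$-dimensional top-singular-value subspace of $M'$ forces $|\det X_{k}|\gtrsim s^{2}$. The count $9$ is just enough to accommodate the orthogonality constraints across $q_{0},q_{1}$ and a third auxiliary vector that will be used to realize the midpoint. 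A global phase rotation of $M'$ (which preserves $\beta$ but rotates $\det$) then aligns the $(x,y)$-projections of the images $P_{0},P_{1}\in\R^{3}$ into an isosceles non-obtuse triangle with the origin of base length on the order of $sr/\sqrt{2}$, matching the hypothesis of Lemma~\ref{a46}.

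Second I apply Lemma~\ref{a46} to the segment from $P_{0}$ to $P_{1}$. Its proof only evaluates $f$ at the two endpoints and the Euclidean midpoint; the endpoints are realized by construction, and the main task is to realize the midpoint as $X_{1/2}=U_{1/2}^{*}M'U_{1/2}$ for some explicit third frame. Since $\det$ and $\beta$ are quadratic in the entries of $X$ and therefore quartic in the entries of $U(t)$, the naive choice $t=\pi/4$ does not give the required $\R^{3}$-coordinates; I expect $U_{1/2}$ to be built by a careful averaging within the $9$-dimensional good subspace, using the extra ambient dimensions granted by the $\sigma_{9}$ cushion to match all three $\R^{3}$-coordinates of the midpoint simultaneously. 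This midpoint realization is the principal obstacle and is exactly what dictates the ``$+8$'' cushion in the $\sigma_{9}$ assumption. Once $U_{1/2}$ is constructed, one of the three frames achieves $|f|\ge d^{2}/8\ge s^{2}r^{2}/16$, hence $|\phi(M')|^{1/2}\ge sr/4$. A subsidiary step, probably produced by exhibiting a nearly-normal compression from the inner-radius hypothesis, rules out the degenerate scenario in which $\Phi$ is uniformly negative and confirms the supremum in $\phi(M')$ is nonnegative, so that the lower bound on $|\Phi|$ at one frame transfers to a lower bound on $|\phi(M')|$.
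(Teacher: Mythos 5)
Your observation that $\Phi(X)=f\bigl(\Re\det X,\Im\det X,\beta(X)\bigr)$ with $\beta(X)=(\abs{\tr X}^2-\rmagn{X}_F^2)/2$ is a clean way to express the algebra, and Lemma~\ref{a46} is indeed the right tool. But your proposal has a genuine gap, one you yourself flag as the ``principal obstacle'': you cannot realize the midpoint of the $\R^3$-segment by any explicit interpolation of frames. The map from $U$ to $(\Re\det X,\Im\det X,\beta(X))$ is quartic in the entries of $U$, so $U(t)=\cos t\,U_0+\sin t\,U_1$ traces out some curve in $\R^3$, not the straight segment between the two endpoints, and nothing in your ``careful averaging within the $9$-dimensional good subspace'' heuristic pins down a frame whose image is the Euclidean midpoint. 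Without that, Lemma~\ref{a46} does not apply, and the proof stalls exactly where you say it might.

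The paper avoids this entirely. It fixes the first column of the frame once and for all (as $V_2y$ with $y$ chosen to make $\abs{y^*V_2^*M'V_2y}$ large, yielding the $\sigma_9$ factor via a dimension count on $\col(V_2)\cap\spn\set{e_1,\dots,e_n}$), and lets only the second column $V_1x$ vary over a $3$-dimensional subspace. Because $V_2$ is built orthogonal to $M'v,M'v'$, the compressed $2\times 2$ matrix is upper triangular, so $\Phi$ reduces to $f(S(T(x)))$ where $T(x)=\bigl(x^*H(\bar sB)x,\,x^*H(-i\bar sB)x,\,x^*\tfrac{uu^*}{2}x\bigr)$ is a \emph{joint numerical range} of three Hermitian matrices on $\C^3$. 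The crucial fact, Proposition~3 of \cite{b23}, is that such a joint numerical range is convex; hence $\range(T)$ (and its linear image under $S$) automatically contains the \emph{entire} $\R^3$-segment between the two realized endpoints, in particular the midpoint, with no explicit construction needed. That convexity result is precisely the missing ingredient in your argument, and it also shows your intuition about the role of the ``$+8$'' is off: the cushion comes from the dimension count forcing $\magn{V_2^*M'V_2}\ge\sigma_9(M')$, not from needing extra degrees of freedom to match midpoint coordinates. A second, smaller issue: your final paragraph asserts a ``nearly-normal compression'' will settle the sign of $\phi(M')$, but this is left entirely unsupported; the paper's route, by contrast, directly produces a large value of $\abs{\Phi}$ at a concrete frame.
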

\begin{proof}
Suppose $z_0$ is such that $W(M')\supset\ball(z_0,\inR(W(M')))$. Then
\[z_\pm=z_0+\frac{z_0}{\abs{z_0}}\cdot\frac{1\pm i}{\sqrt 2}\cdot\inR(W(M'))\]
are the vertices of a non-obtuse isosceles triangle with apex at $0$ and base length $\sqrt2\inR(W(M'))$.
Let $v,v'$ be unit vectors such that $v^*M'v=z_+$ and $v'^*M'v'=z_-$.
Then set $V_1\in\semiu(n+1,3)$ and $V_2\in\semiu(n+1,m)$ such that
\[
\col(V_1)=\spn\set{v,v',e_{n+1}}
\qand
\col(V_2)=\spn\set{v,v',e_{n+1},M'v,M'v'}^\perp.\]
Let $y\in\C^m$ be such that $\abs{y^*V_2^*MV_2y}\ge\frac12\magn{V_2^*MV_2}$. Set
$$s=y^*V_2^*M'V_2y\qand u^*=y^*V_2^*M'V_1\qand B=V_1^*M'V_1.$$
Then consider the following, thought of as a function of $x\in\C^2,\magn x=1$,
\[\bmat{V_2y & V_1x}^*M\bmat{V_2y & V_1x}=\bmat{s & u^*x \\ & x^*Bx}.\]
In particular, by setting $U_x=\bmat{V_2y&V_1x}$, see that
\[\phi(M)\ge\Phi(U_x^*MU_x)=\abs{\conj s x^*Bx}^2-\abs{\Re\pare{\conj sx^*Bx}^2-\frac{\abs{u^*x}^2}2}^2.\]
We can express this as the composition of three functions.
\begin{align}
T&:\set{x\in\C^3:\magn x=1}\to\R^3,\quad T(q)
=
\bmat{
\Re(\conj sx^*Bx)\\
\Im(\conj sx^*Bx)\\
{\abs{u^*x}^2}/2
}
=
\bmat{
x^*H(\conj sB)x\\
x^*H(-i\conj sB)x\\
x^*\frac{uu^*}2x
}
\\
S&:\R^3\to\R^3,\quad S(x,y,z)=(x,y,x-z)
\\
f&:\R^3\to\R,\quad f(x,y,z)=x^2+y^2-z^2.
\end{align}
That is,
\[\Phi(U_x^*M'U_x)=f(S(T(x))).\]
Note the coordinates of $T$ are elements of numerical ranges of Hermitian matrices, so it's range is a generalized numerical range and is convex (see Proposition 3 of \cite{b23}). The shadow of $\range(T)$ on the $xy$-plane treated as $\C$ is simply the numerical range $W(\conj s B)$. Then see that $S$ is a linear map which preserves this shadow. By construction, $W(\conj s B)=\conj sW(V_1^*M'V_1)$ contains $\conj sz_\pm$, i.e. the vertices of a non-obtuse isosceles triangle with apex at 0 and base length at least $$\abs{s}\sqrt 2\inR(W(M'))\ge\frac1{\sqrt2}\magn{V_2^*M'V_2}\inR(W(M')).$$
Thus by Lemma \ref{a46}, we are guaranteed a point $p\in S(\range(T))$ with
\[\abs{f(p)}\ge\frac1{16}\magn{V_2^*M'V_2}^2\inR(W(M'))^2\]
Finally, note the dimension of $\col(V_2)\cap\spn\set{e_1,\cdots,e_n}$ is at least $n-4$, so $\magn{V_2^*M'V_2}\ge\sigma_9(M')$.
\end{proof}

Everything culminates in the following small-ball probability bound.
\begin{proposition}[Small-ball probability bound for numerical measures]\label{a12}For any nonzero $M\in\C^{n\times n}$ and $\eps\in(0,\magn M)$,
\[\Pr(\abs{q^*Mq}\le\eps)
\le\eps^2\log^2\pare{4e\magn{M}/\eps}
\cdot\frac{5.1\,(n+3)^3}{\sigma_9(M)\inR(W(M))}.\]
\end{proposition}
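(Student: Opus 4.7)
The proof is essentially a concatenation of the four lemmas built up in this section, with careful bookkeeping of constants. I would proceed as follows.

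First, apply Lemma \ref{a41} to pass from $M$ to the regularized matrix $M'=M\oplus\eps N\oplus\eps N$, turning the small-ball probability into a density estimate:
\[\Pr(\abs{q^*Mq}\le\eps)\le\pi\eps^2\sup_{z\in\C}\rho_{M'}(z).\]
Now apply Proposition \ref{a40} to $M'$, which has dimension $n+4$. This gives a bound on $\sup_z\rho_{M'}(z)$ as an integral over $\theta$ of $\log(4e\,w_1(\theta)/w_3(\theta))/(w_1(\theta)w_2(\theta))$ with a prefactor $(n+3)(n+2)(n+1)/(4\pi^2)$.

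Next, I would split this integral by pulling the logarithm out as an $L^\infty$ factor (H\"older), yielding the product of
\[
\sup_\theta\log\!\pare{4e\cdot w_1(\theta)/w_3(\theta)}
\quad\text{and}\quad
\int_0^{2\pi}\frac1{w_1(\theta)w_2(\theta)}\wrt\theta.
\]
The first factor is controlled by Lemma \ref{a42}, giving $\log(4e\magn{M'}/\eps)$. The second factor is controlled by Lemma \ref{a44}, giving $(4\pi+16\log(12^{1/4}\magn{M'}/\eps))/\abs{\phi(M')}^{1/2}$. Finally, Lemma \ref{a47} converts $\abs{\phi(M')}^{1/2}$ to $\sigma_9(M')\inR(W(M'))/4$ in the denominator.

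At this point everything is expressed in terms of quantities of $M'$. The last step is to replace them with the analogous quantities of $M$. Since $M'=M\oplus\eps N\oplus\eps N$ and $W(\eps N)=\ball(0,\eps)$ while $\magn{\eps N}=2\eps$: the numerical range of $M'$ is $\conv(W(M)\cup\ball(0,\eps))\supseteq W(M)$, so $\inR(W(M'))\ge\inR(W(M))$; the singular values of $M'$ are the union (with multiplicity) of those of $M,\,\eps N,\,\eps N$, and insertion of nonnegative values into a sorted list can only increase the 9th largest, giving $\sigma_9(M')\ge\sigma_9(M)$; and $\magn{M'}=\max(\magn M,2\eps)\le 2\magn M$ under the hypothesis $\eps<\magn M$. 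Plugging these in and absorbing $\log 2$ and similar additive constants into $\log(4e\magn M/\eps)$ (which exceeds $\log(4e)>2$ by the $\eps<\magn M$ hypothesis), together with bounding $(n+3)(n+2)(n+1)\le(n+3)^3$, yields a bound of the form $\eps^2\log^2(4e\magn M/\eps)\cdot C(n+3)^3/(\sigma_9(M)\inR(W(M)))$ for an explicit constant $C$.

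The only nontrivial part is the arithmetic to show that the resulting $C$ can be taken as $5.1$. This amounts to verifying that $(L+\log 2)(4\pi+16(L-\log(4e/(2\cdot 12^{1/4}))))/(\pi L^2)\le 5.1$ for $L=\log(4e\magn M/\eps)\ge\log(4e)$, which is straightforward calculus in one variable; I expect this to be the only real obstacle and would handle it by checking the worst case at $L=\log(4e)$ and at $L\to\infty$.
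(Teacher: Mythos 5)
Your proposal follows exactly the same chain as the paper's own proof: Lemma~\ref{a41} to regularize via $M'$, Proposition~\ref{a40} for the density, a H\"older split into $L^\infty$ and $L^1$ factors handled by Lemmas~\ref{a42} and~\ref{a44}, Lemma~\ref{a47} to lower-bound $|\phi(M')|$, and then the monotonicity facts $\sigma_9(M')\ge\sigma_9(M)$ and $\inR(W(M'))\ge\inR(W(M))$.

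The one place you deviate is in explicitly treating $\rmagn{M'}=\max(\rmagn{M},2\eps)$, which the paper silently replaces by $\rmagn M$. Here your bookkeeping does not actually close: the quantity you propose to verify,
\[
\frac{(L+\log 2)\bigl(4\pi+16\bigl(L-\log\bigl(4e/(2\cdot 12^{1/4})\bigr)\bigr)\bigr)}{\pi L^2},
\]
evaluates to roughly $5.8$ at the boundary $L=\log(4e)$, so the "worst case" check you defer to would in fact fail and the constant $5.1$ is not reachable by absorbing a full $\log 2$. When $\eps\le\rmagn M/2$ one has $\rmagn{M'}=\rmagn M$ exactly and the argument closes cleanly with constant $16/\pi<5.1$; for $\eps\in(\rmagn M/2,\rmagn M)$ either the constant must be enlarged or the log should keep $\rmagn{M'}$. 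This is a minor gap shared with (indeed inherited from) the paper's own proof rather than something you introduced; aside from the incorrect assertion that the arithmetic goes through as stated, your route is the paper's route.
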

\begin{proof}
    Start by applying Lemma \ref{a41},\[(\star)=\Pr\pare{\abs{q^*Mq}\le\eps}\le\pi\eps^2\sup_{z\in\C}\rho_{M'}(z).\]
    Then apply Proposition \ref{a40} to $\rho_{M'}$,
    \[
    (\star)\le\pi\eps^2\sup_{z\in\C}\rho_{M'}(z)\le\pi\eps^2\cdot\frac{(n+3)(n+2)(n+1)}{4\pi^2}\int_0^{2\pi}\frac1{w_1(\theta)w_2(\theta)}\log\pare{4e\cdot\frac{w_1(\theta)}{w_3(\theta)}}\wrt\theta.
    \]
    Now apply Lemma \ref{a42} to bound the $\log$, which then can be removed from the integral.
    \[
    (\star)\le\eps^2\cdot\frac{(n+3)(n+2)(n+1)}{4\pi}\log\pare{\frac{4e\magn{M'}}\eps}\int_0^{2\pi}\frac1{w_1(\theta)w_2(\theta)}\wrt\theta.
    \]
    Then Lemma \ref{a44} bounds the integral resulting in
    \spliteq{}{
    (\star)
      &\le\eps^2\cdot\frac{(n+3)(n+2)(n+1)}{4\pi}\log\pare{\frac{4e\magn{M'}}\eps}\cdot\frac{4\pi+16\log\pare{{12}^{1/4}\cdot\magn{M'}/\eps}}{\abs{\phi(M')}^{1/2}}
    \\&\le\eps^2\log^2\pare{4e\magn{M'}/\eps}\cdot\frac4\pi\cdot{(n+3)(n+2)(n+1)}\frac1{\abs{\phi(M')}^{1/2}}
    }
    Then Lemma \ref{a47} provides a lower bound on $\phi(M')$, resulting in
\[(\star)\le\eps^2\log^2\pare{4e\magn{M'}/\eps}\cdot\frac4\pi\cdot{(n+3)(n+2)(n+1)}\frac4{\sigma_9(M')\inR(W(M'))}.\]
Finally, $\sigma_9(M')\ge\sigma_9(M)$ and $\inR(W(M'))\ge\inR(W(M))$ are both deterministic facts and reduce the bound to be in terms of $M$.
\end{proof}

\section{Least singular value}\label{a13}
\newcommand{\xlogx}[1]{#1\log(e/#1)}
The goal of this section is to use the improved small-ball estimates for the numerical measure from Section \ref{a10} along with the reduction from Section \ref{a6} to obtain a stronger tail bound on $\smin(Q^*AQ)$ than what's provided in Section \ref{a8}.
Recall our reduction in Lemma \ref{a7} derives tail bounds on $\smin(Q^*AQ)$ from the anti-concentration of the numerical measure of a random Schur complement $M=(A/Q'),Q'\sim\semiu(n,\ell-1)$. Proposition \ref{a12} shows that for each fixed $M$, this anti-concentration can be expressed in terms of three quantities of $M$:
\spliteq{\label{a48}}{
\magn M\qand\sigma_9(M)\qand\inR(W(M)).}
These quantities must be estimated when $M$ is a random Schur complement of $A$ in order for Proposition \ref{a12} to contribute to a tail bound on $\sigma(Q^*AQ)$.
Note that
\[\magn M=\magn{A-AQ'(Q'^*AQ')^{-1}Q'^*A}\]
can be large when $\magn{(Q'^*AQ)^{-1}}^{-1}=\smin(Q'^*AQ')$ is small. But this is exactly the tail bound we're seeking control over!
Fortunately, we already derived a tail bound for this: Proposition \ref{a25}. This is a weaker tail bound, but because $\magn M$ appears under a log, it turns out to be good enough. Control over $\sigma_9(M)$ will follow from Eckart-Young since $M-A$ will be low rank. The difficult term to control is $\inR(W(M))$.

To control $\inR(W(M))$, we first establish control over the area of the numerical range of random compressions. For concision, we introduce the following notation: for set $\Omega\subset\C$ denote
\[\Omega\bowtie 0=\text{convex hull of }\Omega\text{ and the origin}.\]

\begin{lemma}[Random compression area]
\label{a49}
Fix any $B\in\C^{m\times m}$ and let $T\sim\semiu(m,2k)$.
Then for every $\theta\in(0,1)$,
\[
\Pr\pare{
\area(W\pare{T^*BT}\bowtie0)\le\frac{\area(W(B)\bowtie0)}{4\pi m^2}\cdot\theta}
\le\xlogx{\theta^k}.
\]
\end{lemma}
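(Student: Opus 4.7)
The plan is to exploit the fact that $T$ contains $k$ disjoint pairs of orthonormal columns $T_j := [Te_{2j-1}, Te_{2j}] \in \C^{m \times 2}$. Each pair defines a $2 \times 2$ compression $T_j^* B T_j$ whose numerical range lies inside $W(T^*BT)$, because any unit vector in $\C^{2k}$ supported on the basis pair $\{e_{2j-1}, e_{2j}\}$ yields a unit vector in $\col(T_j)$. Consequently
\[ \area(W(T^*BT) \bowtie 0) \ge \max_{1 \le j \le k} \area(W(T_j^*BT_j) \bowtie 0), \]
so it suffices to bound the probability that every single pair yields a small-area compression.

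Next I would condition sequentially on $T_1, \ldots, T_{j-1}$: given these, $T_j$ is Haar distributed on the Stiefel manifold of the orthogonal complement $V_j$ of the previously revealed columns. In particular $T_j^*BT_j$ has the same law as a Haar $2$-compression of the restricted matrix $B|_{V_j}$. This reduces the problem to a single-pair bound: for any matrix $B' \in \C^{m' \times m'}$ and $U \sim \semiu(m',2)$,
\[ \Pr\!\bigl[\area(W(U^*B'U) \bowtie 0) \le \beta\bigr] \le f(\beta, B', m'). \]
The area of the numerical range of a $2 \times 2$ matrix has an explicit expression via the Elliptic Range Theorem; combining this with anti-concentration estimates in the spirit of Proposition \ref{a25} should yield such a bound with $f(\beta, B', m')$ of order $\beta (m')^2 / \area(W(B')\bowtie 0)$, perhaps up to a logarithmic correction.

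The main obstacle will be combining these per-pair bounds to achieve the precise $\xlogx{\theta^k}$ tail. A naive product of per-pair bounds of size $\theta\log(e/\theta)$ gives $(\theta\log(e/\theta))^k$, which is strictly weaker than $\xlogx{\theta^k}$ for $k \ge 2$. To achieve the claimed sharpness, I would exploit the identity $\xlogx{t} = \Pr[UV \le t]$ for independent uniforms $U, V \in [0,1]$, which suggests that the normalized quantity $4\pi m^2 \cdot \area(W(T^*BT)\bowtie 0)/\area(W(B)\bowtie 0)$ ought to stochastically dominate $(UV)^{1/k}$. Achieving this likely requires a linear (rather than log-linear) per-pair small-ball bound together with a refined joint analysis exhibiting an explicit two-factor product structure across the $k$ pairs; identifying these two factors and verifying the requisite small-ball behavior of each appears to be the delicate part of the argument.
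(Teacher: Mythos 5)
You correctly identify the crux: a naive product of $k$ per-pair small-ball bounds of size $\theta\log(e/\theta)$ gives $\theta^k\log^k(e/\theta)$, which is strictly weaker than the claimed $\theta^k\log(e/\theta^k)=\theta^k(1+k\log(1/\theta))$ once $k\ge 2$. But your proposed route --- disjoint orthonormal column pairs, sequential conditioning on a shrinking orthogonal complement, and a per-pair bound via the elliptic range theorem --- does not produce the two-factor product structure that your $\Pr[UV\le t]$ heuristic calls for, and you explicitly leave that closing step unresolved. The structural obstacle is real: $k$ disjoint $2\times 2$ events, even if each admits a linear small-ball bound, have no evident refactorization into a pair of $k$-fold events, so this decomposition appears to be the wrong one.

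The paper's proof avoids $2\times 2$ compressions entirely. It observes that $\col(T)$ is almost surely the span of $2k$ i.i.d.\ Haar unit vectors (even though the orthonormal columns of $T$ are dependent, the column space is), splits these into two groups $x_1,\ldots,x_k$ and $y_1,\ldots,y_k$, passes to the scalar samples $x_j^*B'x_j,\ y_{j'}^*B'y_{j'}$ from the numerical measure of $B'=B\oplus[0]$, and lower-bounds $\area(W(T^*BT)\bowtie 0)$ by $\max_{j,j'}\area\conv\{x_j^*B'x_j,\,y_{j'}^*B'y_{j'},\,0\}$ --- all $k^2$ triangles with apex at the origin, not $k$ disjoint blocks. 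The two $k$-th-power factors then appear naturally: conditioning on the $x_j$-sample of largest modulus, the $k$ values $y_{j'}^*B'y_{j'}$ are i.i.d., so the probability they all land in the thin angular strip $\{z:\area\conv\{z,x,0\}\le\eps\}$ is a $k$-th power of a one-dimensional small-ball bound obtained from the Hermitian B-spline density (Lemma~\ref{a24}); and the probability that the max-modulus sample itself is small is again a $k$-th power of the same kind of bound. Integrating one $k$-th power against the other via the layer-cake formula yields precisely $a+a\log(1/a)$ with $a$ of order $\theta^k$. Your $UV$ target is exactly right, but the two-group/triangle decomposition, rather than $k$ disjoint pairs, is the missing structural idea.
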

\begin{proof}
Note that $\col(T)$ is generated by $2k$ \textit{independent} samples from the Haar distribution on $\semiu(m,1)$. Thus, if $x_1,\ldots,x_k,y_1,\ldots y_k$ are independent samples from the numerical measure of $B$, then
\spliteq{}{
&\conv(0,W\pare{U^*BU})\supset\conv\set{x_1,\ldots,x_k,y_1,\ldots y_k,0}\supset\bigcup_{j,j'\in[k]}\conv\set{x_j,y_{j'},0}
\\
\implies
&\area\conv(0,W\pare{U^*BU})\ge\max_{j,j'\in[k]}\area\conv\set{x_j,y_{j'},0}.}
Note that the areas appearning on the right hand side if we replace $x_j,y_{j'}$ with samples from the numerical measure of $B'=B\oplus\bmat0$ instead. Note $W(B')=W(B)\bowtie0$.
Let $r_-,r_+$ be the inner and outer radii of $W(B')$ respectively. Recall that $W(H(\eitc B'))$ is the projection of $W(B')$ onto the line $\eit\R$ rotated back to the real axis, which is an interval of length $w_1(H(\eitc B'))$. Thus, there exists $\theta_+$ with $w_1(H(e^{-i\theta_+} B'))\ge r_+$ and $w_1(H(\eitc B'))\ge2r_-$ for all $\theta$.

We follow a similar strategy as \eqref{a26} and use \(\abs{v^*B'v}\ge\abs{v^*H(e^{-i\theta_+} B')v}\) along with
with Fact \ref{a22} and Lemma \ref{a24} to obtain for $q\sim\semiu(n,1),$
\spliteq{\label{a50}}{
\Pr\pare{ \abs{v^*B'v}\le s }
\le\Pr\pare{ \abs{v^*H(e^{-i\theta_+} B')v}\le s }
\le\frac{m}{w_1\pare{H(e^{-i\theta_+} B')}}\cdot2s
\le\frac{m}{r_+}\cdot2s.}
Let $x$ denote the $x_j$ with largest absolute value and set $\theta=\arg(x)$. 
Define the set
\[
\Omega_{x,\eps}
=\set{z\in W(B'):\area \conv\set{z,x,0}\le\eps}
=\set{z\in W(B'):\Re(\eitc z)\le\frac\eps{\abs x}}.\]
Given a fixed value of $x$, the probability any $y_j$ lands in $\Omega_{x,\eps}$ is simply
\spliteq{\label{a51}}{
\Pr\pare{v^*B'v\in\Omega_{x,\eps}\,|\,x}
=\Pr\pare{ \abs{v^*H(\eitc B')v}\le\frac{\eps}{\abs x}\,\bigg|\,x}
\le\frac{m}{w_1(H(\eitc B'))}\cdot\frac{2\eps}{\abs x}
\le\frac{m}{r_-}\cdot\frac{\eps}{\abs x}.
}
The resulting bound follows by applying iterated expectation. First we control the inner probability with \eqref{a51}.
\spliteq{}{
(\star)=\Pr\pare{\area\conv\set{x_{j'},y_{j},0}\le\eps,\,\forall j,j'\in[k]}
  &\le\E\Pr\pare{\area\conv\set{x,y_{j},0}\le\eps,\,\forall j\in[k]\,|\,x}
\\&=  \E\Pr\pare{\area\conv\set{x,y_{j},0}\le\eps\,|\,x}^k
\\&\le\E\min\pare{1,\pare{\frac{m}{r_-}\cdot\frac{\eps}{\abs x}}^k}
}
Now use the variational formula for expectation along with the definition of $x$ as the largest absolute $x_j$.
\spliteq{}{
(\star)
\le\int_0^1\Pr\pare{ \pare{\frac{m}{r_-}\cdot\frac{\eps}{\abs x}}^k\ge t }\wrt t
=  \int_0^1\Pr\pare{ \frac{\eps m}{t^{1/k}r_-}\ge \abs x }\wrt t
=  \int_0^1\Pr\pare{ \frac{\eps m}{t^{1/k}r_-}\ge \abs{x_j} }^k\wrt t.}
This is controlled by \eqref{a50},
\[
(\star)\le\int_0^1\min\pare{ 1,\pare{\frac{2\eps m^2}{r_-r_+}}^k \cdot\frac1t }\wrt t.
\]
Set $a=\pare{\frac{2\eps m^2}{r_+r_-}}^k$. When $a\le 1$ the integral is
\[
\int_0^1\min(1,a/t)\wrt t
=\int_0^a1\wrt t+\int_a^1a/t\wrt t
=a+a\log(1/a)=\xlogx a.\]
Finally, use $r_-r_+\ge\frac1{2\pi}\area W(B')$ to bound $a$.
\end{proof}
We now turn this into a lower tail bound on \(\inR(W(M)).\)
\begin{lemma}[Control on inner radius]\label{a52}
Fix $A\in\C^{n\times n}$ and set $M=(A/Q')$ for $Q'\sim\semiu(n,\ell-1)$.
Then for every $\ell'>\ell$,
\[
\Pr\pare{
\inR(W((A/Q')))
\ge
\frac\theta{(70\ell'n)^{2.5}}
\cdot\frac{\sigma_{\ell'}(A)^2}{\sigma_1(A)^2}\cdot\inR(W(A)\bowtie0)}
\ge1-\xlogx{\theta^{\frac{\ell'-\ell}2}}.
\]
\end{lemma}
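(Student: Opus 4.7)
The plan is to apply Lemma \ref{a49} inside the orthogonal complement of $\col(Q')$ and convert the resulting area bound into an inner-radius bound via a standard convex-geometry inequality.

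Working in an orthonormal basis of $\col(Q')^\perp$, identify $(A/Q')$ with the $(n-\ell+1)\times(n-\ell+1)$ matrix $S$ (the usual Schur complement of $A$), so that $W((A/Q'))=W(S)\bowtie0$. I extend $Q'$ to a Haar basis $[Q',V]\sim\semiu(n,\ell'-1)$ by adjoining a random $V\in\semiu(\col(Q')^\perp,\ell'-\ell)$; conditionally on $Q'$, this $V$ is Haar on the Stiefel of $\col(Q')^\perp$. Applying Lemma \ref{a49} conditionally with $B=S$, $T=V$, $m=n-\ell+1$ and $2k=\ell'-\ell$ gives, with conditional failure probability at most $\xlogx{\theta^{(\ell'-\ell)/2}}$,
\[
\area(W(V^*SV)\bowtie0)\;\ge\;\frac{\theta}{4\pi(n-\ell+1)^2}\,\area(W(S)\bowtie0).
\]

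Next I would use the planar convex-geometry inequality $\inR(K)\gtrsim\area(K)/\mathrm{diam}(K)$ valid for any convex $K\subset\C$ containing $0$. Since $W(V^*SV)\bowtie0\subset W((A/Q'))$ and $\mathrm{diam}(W((A/Q')))\le2\|S\|$, this yields
\[
\inR(W((A/Q')))\;\gtrsim\;\frac{\area(W(V^*SV)\bowtie0)}{\|S\|}.
\]
The norm $\|S\|\le\|A\|+\|A\|^2/\smin(Q'^*AQ')$ is bounded by invoking Proposition \ref{a25} to lower-bound $\smin(Q'^*AQ')$ on an event of controlled probability. Tracking constants, this step produces one copy of $\sigma_\ell(A)$ (eventually absorbed into the $\sigma_{\ell'}(A)^2/\sigma_1(A)^2$ prefactor together with a further estimate below).

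The main obstacle is to lower-bound $\area(W(S)\bowtie0)=\area(W((A/Q')))$ itself in terms of $\inR(W(A)\bowtie0)$, since no naive comparison between $W(S)$ and $W(A)$ exists and substituting $\area\ge\pi\inR^2$ would be circular. I would exploit the algebraic identity $w^*(A/Q')w=w^*Aw$ valid whenever $w\perp\spn(Q',A^*Q')$, which gives $W(S)\supset W(A|_{V_0})$, where $V_0=\spn(Q',A^*Q')^\perp$ is a random subspace (determined by $Q'$) of codimension at most $2(\ell-1)$. The area of $W(A|_{V_0})\bowtie0$ can then be compared to $\area(W(A)\bowtie0)$, and hence to $\pi\inR(W(A)\bowtie0)^2$, by a second application of Lemma \ref{a49} inside $V_0$ after comparing the distribution of $V_0$ to a genuinely Haar subspace; the mismatch is controlled via a further application of Proposition \ref{a25} at level $\ell'$, and is the source of the remaining factor $\sigma_{\ell'}(A)/\sigma_1(A)$ in the final estimate. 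A union bound over the (small number of) failure events, plus bookkeeping of the polynomial constants, produces the claimed bound.
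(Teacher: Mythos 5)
Your sketch correctly identifies the central difficulty — the relevant subspace $V_0=\spn(Q',A^*Q')^\perp$ (or even just $\col(A^*Q')^\perp$) is \emph{not} Haar-distributed, since it depends on $A$ — but the proposed resolution does not work, and there is also a structural inefficiency earlier on.

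First, the initial compression step is superfluous and costly. You compress $S=(A/Q')$ by a Haar $V$, use $W(V^*SV)\bowtie0\subset W(S)\bowtie0$ together with $\inR\gtrsim\area/\mathrm{diam}$, and obtain
\[
\inR(W(S)\bowtie0)\;\gtrsim\;\frac{\area(W(V^*SV)\bowtie0)}{\|S\|}\;\gtrsim\;\frac{\theta}{\poly}\cdot\frac{\area(W(S)\bowtie0)}{\|S\|}.
\]
But $\inR(W(S)\bowtie0)\gtrsim\area(W(S)\bowtie0)/\|S\|$ holds deterministically without the compression; the extra step only spends your entire allowed failure probability $\theta^{(\ell'-\ell)/2}\log(e/\theta^{(\ell'-\ell)/2})$ while making the bound strictly weaker. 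You are then left needing to lower-bound $\area(W(S)\bowtie0)$ and $\smin(Q'^*AQ')$ on further events, for which no probability budget remains.

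Second, and more fundamentally, the plan to ``compare the distribution of $V_0$ to a genuinely Haar subspace'' using Proposition \ref{a25} is not a viable step. Proposition \ref{a25} gives a tail bound on $\smin(Q'^*AQ')$; it says nothing about the law of $\col(A^*Q')^\perp$, which can be highly skewed by $A$ (e.g.\ if $A$ has a few dominant singular directions). There is no evident coupling or change-of-measure argument that converts a $\smin$ tail bound into a comparison with Haar for this subspace, and you give none. This is exactly the gap the paper's proof is engineered to avoid: it performs the change of variables $y=Ax$, rewriting
\[
\set{\tfrac{x^*Ax}{\|x\|^2}:x\in\col(A^*Q')^\perp\cap\col(U)}
=\set{\tfrac{y^*A^{-*}y}{\|A^{-1}y\|^2}:y\in\col(Q')^\perp\cap\col(AU)},
\]
which replaces the $A$-dependent subspace $\col(A^*Q')^\perp$ by the genuinely Haar $\col(Q')^\perp$. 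After fixing $V$ with $\col(V)=\col(AU)$, the intersection $\col(Q')^\perp\cap\col(V)$ is $\col(VT)$ for Haar $T\sim\semiu(\ell',\ell'-\ell)$, and Lemma \ref{a49} applies cleanly. The $\smin(AU)^2$ and $\sigma_1(AU)^{-2}$ factors produced by pulling $\|A^{-1}y\|^2$ and $\|Ax\|^2$ out of the normalizations are then absorbed by choosing a fixed good $U$ via the probabilistic method (using Lemma \ref{a49} once more and Lemma \ref{a53}), so that only the single Haar-$T$ event carries failure probability. Without this inversion trick your outline has a genuine gap and cannot be completed as stated.
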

\begin{proof}
We prove the result for nonsingular $A$, which extends to all $A$ by continuity.
The first step is to relate the numerical range of $(A/Q')$ to the numerical range of a compression of the inverse.
Fix a $U\in\semiu(n,\ell')$ (we'll say how to pick $U$ later) and set $V\in\semiu(n,\ell')$ so that $\col(V)=\col(AU)$.

Note that \((A/Q')x=Ax\) for all $x\in\col(A^*Q')^\perp$. So
\spliteq{}{
W((A/Q'))
   \supset\set{\frac{x^*Ax}{\magn x^2}:x\in\col(A^*Q')^\perp}
  &\supset\set{\frac{x^*Ax}{\magn x^2}:x\in\col(A^*Q')^\perp\cap\col(U)}
\\&=      \set{\frac{y^*A^{-*}y}{\magn{A^{-1}y}^{2}}:y\in\col(Q')^\perp\cap\col(AU)}.
}
Consider the subspace $\col(Q')^\perp\cap\col(AU)=\col(Q')^\perp\cap\col(V)$. Since $Q'$ is Haar distributed, $\col(Q')^\perp\cap\col(V)$ is a Haar distributed subspace of $\col(V)$. Thus, it can be expressed as $\col(Q')^\perp\cap\col(V)=\col(VT)$ where $T\sim\semiu(\ell',\ell'-\ell)$. Since $W((A/Q'))$ is convex and contains the origin, we subsequently have
\spliteq{}{
W((A/Q'))
\supset\set{\frac{y^*A^{-*}y}{\magn{A^{-1}y}^{2}}:y\in\col(VT)}\bowtie0
&\supset\frac1{\sup_{\substack{y\in\col(V)\\\magn y=1}}\magn{A^{-1}y}^2}\set{\frac{y^*A^{-*}y}{\magn y^2}:y\in\col(VT)}\bowtie0
\\&=\smin(AU)^2\cdot W(T^*V^*A^{-*}VT)\bowtie0.}
$T^*V^*A^{-*}VT$ is a Haar random compression of $V^*A^{-1}V$, so the area of the right hand side is controlled by Lemma \ref{a49}. Specifically, we have
\[\area(W(T^*V^*A^{-*}VT)\bowtie0)\ge\frac{\theta}{4\pi(\ell')^2}\cdot\area(W(V^*A^{-*}V)\bowtie0)\]
with probability at least $1-\xlogx{\theta^{\frac{\ell'-\ell}2}}$. Condition on this event.
Now examine the numerical range on the right hand side,
\[W(V^*A^{-*}V)=\set{\frac{y^*A^{-*}y}{\magn y^2}:y\in\col(V)}=\set{\frac{x^*Ax}{\magn{Ax}^2}:x\in\col(U)}.\]
Similar to before, note $W(V^*A^{-1*}V)\bowtie0$ is convex and contains the origin so we can pull out the $\magn{Ax}$ factor,
\spliteq{}{
W(V^*A^{-*}V)\bowtie0
  =\set{\frac{x^*Ax}{\magn{Ax}^2}:x\in\col(U)}\bowtie0
  &\supset\frac1{\sup_{\substack{x\in\col(U)\\\magn x=1}}\magn{Ax}}\cdot W(U^*AU)\bowtie0
\\&=\sigma_1(AU)^{-2}\cdot W(U^*AU)\bowtie0.
}
The consequence is
\[W((A/Q'))\supset\frac{\theta}{4\pi(\ell')^2}\cdot\frac{\smin(AU)^2}{\sigma_1(AU)^2}\cdot W(U^*AU)\bowtie0.\]
Consider for the moment sampling $U\sim\semiu(n,\ell-1)$. Then by Lemma \ref{a49},
\[
\area(W(U^*AU)\bowtie0)\ge\frac{0.1}{4\pi n^2}\cdot\area W(A)\bowtie0
\]
with probability at least $2/3$. By rotational invariance, the quantity $\smin(AU)$ has the same distribution if we replace $A$ by the diagonal matrix of it's singular values via the SVD. Then applying Lemma \ref{a53} (Proposition C.3 in \cite{b2}) gives
\[
\smin(AU)\ge\sigma_{\ell'}(A)\sigma_{\ell'}(U)
\ge\frac{\sigma_{\ell'}(A)}{2\sqrt{\ell'(n-\ell')}}\]
with probability at least 3/4. By the union bound, there is a positive probability both these events occur. Fix $U$ such that this happens.
Note $W(U^*AU)\bowtie0\subset\ball(0,\magn A)$ so the in-radius of $W(U^*AU)\bowtie0$ is at least
\[
\inR(W(U^*AU)\bowtie0)
\ge\frac{\area(W(U^*AU)\bowtie0)}{2\pi\magn A}
\ge\frac{0.1}{4\pi n^2}\,\frac{\area(W(A)\bowtie0)}{2\pi\magn A}
\ge\frac{\inR(W(A)\bowtie0)}{160\pi^2n^2}
.\]
Consequently, the in-radius of $W((A/Q'))$ is at least
\spliteq{}{
\inR(W((A/Q')))
  &\ge
\frac{\theta}{4\pi(\ell')^2}\cdot\frac{\smin(AU)^2}{\sigma_1(AU)^2}
\cdot
\inR(W(U^*AU)\bowtie0)
\\&\ge
\frac{\theta}{4\pi(\ell')^2}\cdot\frac1{2(\ell')^{1/2}(n-\ell')^{1/2}}\frac{\sigma_{\ell'}(A)^2}{\sigma_1(A)^2}
\cdot
\frac{\inR(W(A)\bowtie0)}{160\pi^2n^2}.
\\&\ge
\frac\theta{40000}\cdot\frac1{(\ell')^{2.5}n^{2.5}}
\cdot\frac{\sigma_{\ell'}(A)^2}{\sigma_1(A)^2}\cdot\inR(W(A)\bowtie0)
}
with probability at least $1-\xlogx{\theta^{\frac{\ell'-\ell}2}}$.
\end{proof}

We now have control over all three of the required quantities \eqref{a48}. Therefore, Proposition \ref{a12} implies a tail bound on $\smin(Q^*AQ)$.
Our proof of this fact uses the notion of stochastic domination. We say that a random variable $X$ dominates a random variable $Y$ if $\Pr(X\le\theta_0)\le\Pr(Y\le\theta_0)$ for all $\theta_0$. This implies a coupling between $X$ and $Y$ such that $Y\le X$ almost surely.

\begin{proposition}\label{a15}
    Fix any $A\in\C^{n\times n},\eps\in(0,\magn M/2)$. Pick any $\ell\le n-8$ and sample $Q\in\semiu(n,\ell)$. Then
\[\Pr\pare{\smin(Q^*AQ)\le\eps}\le\eps^2\log^2\pare{\frac{c'_{n,\ell}}\eps\cdot\frac{2\magn A^2}{\sigma_{\ell-1}(z-A)}}\cdot\frac{\magn A^2}{\sigma_{\ell+8}(A)\sigma_{\ell+3}(A)^2}\cdot\frac{c_{n,\ell}}{\inR(A)}.\]
where $\constepst=O(n^{5.5}\ell^{2.5})$ and $\constlog=O(\ell^3n)$ are absolute polynomial factors.
\end{proposition}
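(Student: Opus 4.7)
The plan is to combine the reduction of Lemma~\ref{a7} with the small-ball bound of Proposition~\ref{a12} applied to $M=(A/Q')$, and then to control the three matrix quantities $\magn M$, $\sigma_9(M)$, and $\inR(W(M))$ that appear on the right-hand side of that proposition. Starting from
\[\Pr\pare{\smin(Q^*AQ)\le\eps}\le3\ell^2\cdot\E_{Q'}\Pr\pare{\abs{q^*Mq}\le2\ell\eps\mid Q'},\]
I would condition on $Q'\sim\semiu(n,\ell-1)$, apply Proposition~\ref{a12} inside the expectation, and bound each of the three resulting factors in turn.

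The term $\sigma_9(M)$ is handled deterministically: since $M-A=-AQ'(Q'^*AQ')^{-1}Q'^*A$ has rank at most $\ell-1$, the triangle inequality for singular values (or Weyl) yields $\sigma_9(M)\ge\sigma_{\ell+8}(A)$, matching the $\sigma_{\ell+8}(A)$ appearing in the statement. For $\inR(W(M))$, I would invoke Lemma~\ref{a52} with the choice $\ell'=\ell+3$, producing a lower bound proportional to $\sigma_{\ell+3}(A)^2/\magn A^2\cdot\inR(W(A)\bowtie0)$ outside a bad event of probability $\xlogx{\theta^{3/2}}$ for a free parameter $\theta\in(0,1)$. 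For the norm $\magn M$, the Schur-complement identity gives $\magn M\le\magn A+\magn A^2\magn{(Q'^*AQ')^{-1}}$, and Proposition~\ref{a25} applied to the $(\ell-1)$-dimensional compression $Q'^*AQ'$ provides a polynomial-in-$\eps^{-1}$ tail bound on $\magn{(Q'^*AQ')^{-1}}$. The crucial observation is that $\magn M$ only enters Proposition~\ref{a12} through $\log\magn M$, so a high-probability polynomial-in-$\eps^{-1}$ bound on $\magn M$ merely contributes the $\log^2(\cdots/\eps)$ factor in the final estimate.

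To assemble these estimates, I would partition the expectation over $Q'$ into a good event on which both $\inR(W(M))$ and $\magn M$ are simultaneously well-behaved, and its complement. On the good event, Proposition~\ref{a12} directly delivers the $\eps^2\log^2(1/\eps)$ dependence with the correct prefactor in terms of $\sigma_{\ell+8}(A)$, $\sigma_{\ell+3}(A)$, $\inR(W(A)\bowtie0)$, and $\magn A$. On the bad event, I would fall back on the crude $O(\eps)$ bound of Proposition~\ref{a25} multiplied by the small bad-event probability. Stochastic domination (mentioned in the text) is convenient here, since it lets us couple $\inR(W(M))$ to a reference variable whose tail can be integrated directly against powers of $\eps$.

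The main obstacle is calibrating the free parameter $\theta$ in Lemma~\ref{a52} so that the bad-event contribution matches the good-event $\eps$-dependence while keeping the polynomial-in-$n,\ell$ prefactors under control. Concretely, I expect to need $\theta\sim\eps^{2/3}$: the failure probability then behaves like $\xlogx{\eps}\sim\eps\log(1/\eps)$, which combined with the $O(\eps)$ crude bound yields a bad-event contribution of order $\eps^2\log(1/\eps)$, dominated by the $\eps^2\log^2(1/\eps)$ from the good event. Any other choice either wastes the $\eps^2$ scaling or inflates the prefactor, so the balance of powers is the delicate step.
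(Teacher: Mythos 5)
Your overall architecture matches the paper's exactly: reduce via Lemma~\ref{a7}, apply Proposition~\ref{a12} to $M=(A/Q')$, bound $\sigma_9(M)\ge\sigma_{\ell+8}(A)$ deterministically, control $\inR(W(M))$ by Lemma~\ref{a52} with $\ell'=\ell+3$, and control $\magn M$ inside the $\log$ via Proposition~\ref{a25}. The gap is in the last assembly step.

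You propose a good/bad split at a single threshold $\theta\sim\eps^{2/3}$, claiming the good event contributes $\eps^2\log^2(1/\eps)$ and the bad event contributes $\eps^2\log(1/\eps)$. But on your good event, Lemma~\ref{a52} only guarantees $\inR(W(M))\gtrsim\theta\cdot(\text{deterministic factor})$ with $\theta=\eps^{2/3}$, so the $1/\inR(W(M))$ prefactor in Proposition~\ref{a12} scales like $\eps^{-2/3}$. The good-event bound then becomes $\eps^2\log^2(1/\eps)\cdot\eps^{-2/3}=\eps^{4/3}\log^2(1/\eps)$, which dominates and degrades the final answer to $\eps^{4/3}$ rather than $\eps^2$. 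No single $\eps$-dependent threshold can fix this: a larger $\theta$ inflates the bad-event probability, a smaller one inflates the good-event prefactor. The paper avoids this by not splitting at all: it passes to the almost-sure coupling $\inR(W(M))\ge\theta\cdot c$ where $\Pr(\theta\le\theta_0)=\theta_0^{(\ell'-\ell)/2}\log(e/\theta_0^{(\ell'-\ell)/2})$, and then takes the full expectation, using that $\E[\theta^{-1}]<\infty$ precisely when the exponent $(\ell'-\ell)/2$ exceeds $1$. This is the actual reason the paper sets $\ell'=\ell+3$ (giving exponent $3/2$), and it produces a constant, $\eps$-independent multiplier on $\eps^2\log^2$ rather than an $\eps^{-2/3}$ one. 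Your own aside about coupling to ``a reference variable whose tail can be integrated directly'' is in fact the right idea; commit to that integration-over-the-tail (or equivalently, a direct expectation of $1/\theta$) instead of the one-shot threshold, and the proof closes.
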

\begin{proof}
Let $M=(A/Q')$ for $Q'\sim\semiu(n,\ell)$.
The reduction in Lemma \ref{a7} gives
\[
\Pr\pare{\smin(Q^*AQ)\le\eps}
\le3\ell^2\E\Pr\pare{\abs{q^*Mq}\le2\ell\eps\,|\,M}.
\]
Then apply Proposition \ref{a12} to obtain
\[
(\star):=\Pr\pare{\abs{q^*Mq}\le2\ell\eps\,|\,M}
\le
\eps^2\log^2(4e\magn M/\eps)\cdot
\frac
{5.1\,(n+3)^3}
{\sigma_9(M)\inR(W(M))}.\]
We may apply the deterministic bound $\sigma_{9}(M)\ge\sigma_{\ell+8}(A)$, but the remaining factors are random variables with possibly unbounded support.
A rephrasing of Lemma \ref{a52} is that the ratio
\[
\inR(W(M))\bigg/\inR(W(A))\cdot\frac{\sigma_{\ell'}(z-A)^2}{\sigma_1(A)^2}\cdot\frac1{(70\ell'n)^{2.5}}
\]
stochastically dominates the random variable $\theta$ that satisfies $\Pr(\theta\le\theta_0)=\xlogx{\theta_0^{\frac{\ell'-\ell}2}}$.
We similarly state a rephrasing of Proposition \ref{a25}. The ratio
\[
\smin(Q'^*AQ')
\bigg/
\frac{\sigma_{\ell-1}(A)}{24(\ell-1)^3(n-1)}
\]
stochastically dominates the random variable $\theta'$ that satisfies $\Pr(\theta'\le\theta_0)=\theta_0$.
Note
\spliteq{}{
\magn{M}
=\magn{A-AQ(Q'^*AQ')^{-1}Q^*(A)}
\le\magn{A}+\frac{\magn{A}^2}{\smin(Q^*AQ)}
\le\frac{2\magn{A}^2}{\smin(Q'^*AQ')}.}
Given the appropriate coupling of $Q'$ and $\theta,\theta'$, we have the upper bound with probability 1,
\spliteq{}{
(\star)
&\le
\eps^2\log^2\pare{\frac{4e}\eps\cdot\frac{2\magn{A}^2}{\smin(Q'^*AQ)}}\cdot
\frac
{5.1\,(n+3)^3}
{\sigma_{8+\ell}(A)\inR(M)}
\\&\le
\eps^2\log^2\pare{\frac{4e}\eps\cdot\frac{2\magn{A}^2\cdot24(\ell-1)^3(n-1)}{\sigma_{\ell-1}(A)}\cdot\frac1{\theta'}}\cdot
\frac
{5.1\,(n+3)^3}
{\sigma_{8+\ell}(A)\inR(A)}\cdot\frac{\sigma_1(A)^2}{\sigma_{\ell'}(A)^2}\cdot\frac{(70\ell'n)^{2.5}}{\theta}
}
Pick $\ell'=\ell+3$. Then taking expectations gives
\spliteq{}{
\E(\star)
&\le10
\eps^2\log^2\pare{\frac{4e}\eps\cdot\frac{2\magn{A}^2\cdot24(\ell-1)^3(n-1)}{\sigma_{\ell-1}(A)}}\cdot
\frac
{5.1\,(n+3)^3}
{\sigma_{8+\ell}(A)\inR(A)}\cdot\frac{\sigma_1(A)^2}{\sigma_{\ell'}(A)^2}\cdot{(70\ell'n)^{2.5}}.
}
\end{proof}

\section{Pseudospectral area}\label{a16}
The goal of this section is to convert the tail bounds on $\smin(\cdot)$ from Lemma \ref{a54} and Proposition \ref{a15} into bounds on the expected pseudospectral area.
In this section, fix any $A\in\C^{n\times n}$ and $\eps\in(0,\magn A)$.
The results are stated in terms of $s_k,r,$ and $R$, where $s_k$ is smallest $k$th singular value among shifts:
\[z_k=\argmin_{z\in\C}\sigma_k(z-A)\qand s_k=\sigma_k(z_k-A),\]
and $R$ and $r$ are the diameter and inner radius of
\[\Omega=W(A)+\ball(0,\eps)\]
respectively. The proofs will partition $\Omega$ into the contours
\[\gamma_{k,x}=\set{z\in\Omega:\abs{z-z_k}=x}\]
which by construction have length
\spliteq{\label{a55}}{\len(\gamma_{k,x})\le2\pi\min\pare{x,r}}
The main idea is to use the following fact to convert tail bounds on $\smin(\cdot)$ into the expected area of $\Lambda_\eps(\cdot)$.
\begin{fact}\label{a56}
\[\E\area\Lambda_\eps(Q^*AQ)=\int_{z\in\Omega}\Pr\pare{\smin(Q^*(z-A)Q)\le\eps}.\]
\end{fact}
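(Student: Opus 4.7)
The plan is to apply Tonelli's theorem after rewriting the area as an integral of an indicator. First, since $Q$ has orthonormal columns, $Q^*(zI)Q=zI_\ell$, so $zI-Q^*AQ=Q^*(z-A)Q$ and therefore $\smin(zI-Q^*AQ)=\smin(Q^*(z-A)Q)$. Using the singular-value characterization of the pseudospectrum recalled after \eqref{a1}, this lets me write
\[\area\Lambda_\eps(Q^*AQ)=\int_\C\mathbf1\sqbrac{\smin(Q^*(z-A)Q)\le\eps}\wrt z.\]

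Next, I would justify replacing the domain $\C$ by $\Omega=W(A)+\ball(0,\eps)$. The standard inclusion $\Lambda_\eps(M)\subset W(M)+\ball(0,\eps)$ follows from the observation that if $\magn{(z-M)u}\le\eps$ for some unit vector $u$, then Cauchy--Schwarz gives $\abs{z-u^*Mu}\le\eps$, placing $z$ within $\eps$ of a point of $W(M)$. For $M=Q^*AQ$, one also has $W(Q^*AQ)\subset W(A)$ since $y^*(Q^*AQ)y=(Qy)^*A(Qy)$ with $\magn{Qy}=\magn y$. Combining these gives $\Lambda_\eps(Q^*AQ)\subset\Omega$, so the indicator vanishes whenever $z\notin\Omega$.

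Finally, Tonelli's theorem applied to the nonnegative indicator exchanges the expectation and the integral, yielding
\[\E\area\Lambda_\eps(Q^*AQ)=\int_{z\in\Omega}\Pr\pare{\smin(Q^*(z-A)Q)\le\eps}\wrt z.\]
This is a routine Fubini/Tonelli manipulation combined with a well-known pseudospectral containment, so I do not anticipate any obstacle; the substantive work lies in the later sections that bound the integrand.
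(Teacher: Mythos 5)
Your proposal is correct. The paper states this as a bare \emph{Fact} with no proof, treating it as routine, and your argument --- Tonelli applied to the indicator $\mathbf1[\smin(Q^*(z-A)Q)\le\eps]$ together with the containment $\Lambda_\eps(Q^*AQ)\subset W(Q^*AQ)+\ball(0,\eps)\subset W(A)+\ball(0,\eps)=\Omega$ (via Cauchy--Schwarz and the isometry of $Q$, respectively) --- is exactly the standard justification the authors evidently had in mind.
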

\begin{lemma}\label{a57}
For $k\le(n+1)/2$,\[\sigma_k(z-A)\ge\max\pare{s_k,\frac{\abs{z-z_k}}2}.\]
\end{lemma}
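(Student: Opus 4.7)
The plan is to prove the two lower bounds separately. The bound $\sigma_k(z-A) \ge s_k$ is immediate from the defining property of $z_k$ as $\argmin_{z\in\C}\sigma_k(z-A)$. The work is in the second bound $\sigma_k(z-A) \ge |z-z_k|/2$.

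For the nontrivial bound, I would use the Courant-Fischer min-max characterization
\[\sigma_k(M) = \min_{\dim V = n-k+1}\;\max_{\substack{v\in V\\\|v\|=1}}\|Mv\|.\]
Set $s=\sigma_k(z-A)$. By Courant-Fischer there is a subspace $U\subset\C^n$ of dimension $n-k+1$ on which $\|(z-A)v\|\le s\|v\|$ for every $v\in U$. Applying the same to the matrix $z_k-A$, there is a subspace $U_k\subset\C^n$, also of dimension $n-k+1$, on which $\|(z_k-A)v\|\le s_k\|v\|$ for every $v\in U_k$.

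Here is where the hypothesis $k\le(n+1)/2$ enters. It gives $2(n-k+1)\ge n+1$, so a dimension count forces
\[\dim(U\cap U_k)\;\ge\;2(n-k+1)-n\;=\;n-2k+2\;\ge\;1.\]
Pick any unit vector $v\in U\cap U_k$. By the triangle inequality,
\[|z-z_k| = \|(z-z_k)v\| = \|(z-A)v-(z_k-A)v\|\le\|(z-A)v\|+\|(z_k-A)v\|\le s+s_k.\]
Since $s_k\le s$ by the first (easy) bound, we conclude $|z-z_k|\le 2s$, i.e.\ $\sigma_k(z-A)\ge|z-z_k|/2$, as required.

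The only potentially subtle step is the dimension count, but that is exactly what the hypothesis $k\le(n+1)/2$ is engineered to make work. No further inputs are needed beyond Courant-Fischer and the triangle inequality.
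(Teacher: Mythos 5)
Your proof is correct and is essentially the same argument the paper gives, just presented more from first principles: the paper invokes the Weyl-type singular-value inequality $\sigma_n(X-Y)\le\sigma_k(X)+\sigma_{n-k+1}(Y)$ and then uses $k\le n-k+1$ to replace $\sigma_{n-k+1}(z_k-A)$ by $\sigma_k(z_k-A)=s_k$, whereas you re-derive that inequality in this special case via Courant--Fischer and a dimension count. The hypothesis $k\le(n+1)/2$ plays the identical role in both (guaranteeing the two codimension-$(k-1)$ subspaces intersect), and the triangle-inequality step is the same.
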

\begin{proof}
$s_k\le\sigma_k(z-A)$ follows by definition of $s_k$.
\[\abs{z-z_k}
=\sigma_n((zI-A) - (z_kI-A))
\le\sigma_{n-k+1}(z_k-A)+\sigma_k(z-A)
\]
If $k\le n-k+1$, then
\[\sigma_{n-k+1}(z_k-A)\le\sigma_k(z_k-A)\le\sigma_k(z-A).\]
\end{proof}

Our first bound uses the first order tail bound from Proposition \ref{a25} to derive a pseudo-spectral area bound on the order of $r\eps$.
\begin{lemma}\label{a54}
\[\E\area\Lambda_\eps(Q^*AQ)=2\pi\constepso\log\pare{\frac{eR}{\max\pare{\constepso\eps,r,s_\ell}}}\cdot r\eps.\]
\end{lemma}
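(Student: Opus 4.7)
The plan is to combine Fact \ref{a56} with Proposition \ref{a25} applied pointwise at each shift $z - A$, and then evaluate the resulting two-dimensional integral by foliating $\Omega$ along the contours $\gamma_{\ell,x}$.

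First I would apply Proposition \ref{a25} to $z - A$ in place of $A$, using the trivial cap $\Pr \le 1$ to obtain
\[\Pr\pare{\smin(Q^*(z-A)Q) \le \eps} \le \min\pare{1,\; \constepso\eps / \sigma_\ell(z - A)}.\]
Then Lemma \ref{a57} with $k = \ell$ replaces $\sigma_\ell(z - A)$ by the pointwise lower bound $\max(s_\ell, |z - z_\ell|/2)$, so the integrand depends on $z$ only through the radial quantity $|z - z_\ell|$. Using the identity $\min(1, a/b) = a/\max(a,b)$, this can be rewritten cleanly as
\[\Pr\pare{\smin(Q^*(z-A)Q) \le \eps} \le \frac{\constepso\eps}{\max\pare{\constepso\eps,\; s_\ell,\; |z - z_\ell|/2}}.\]

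Next I would foliate $\Omega$ by the contours $\gamma_{\ell,x}$ for $x \in [0, R]$. Since the right-hand side above is constant along each $\gamma_{\ell,x}$, the integral in Fact \ref{a56} collapses to
\[\E\area\Lambda_\eps(Q^*AQ) \le \int_0^R \len(\gamma_{\ell,x}) \cdot \frac{\constepso\eps}{\max\pare{\constepso\eps,\; s_\ell,\; x/2}}\,dx,\]
and inserting the length bound $\len(\gamma_{\ell,x}) \le 2\pi\min(x, r)$ from \eqref{a55} reduces everything to a purely elementary one-variable integral.

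To finish I would partition $[0, R]$ at the threshold $x^{*} = \max(2\constepso\eps,\, 2 s_\ell,\, r)$. On $[0, x^{*}]$ the factor $\min(x,r)$ is bounded by $r$ (or $x$, whichever is smaller) and the denominator is essentially pinned at its constant floor, so a direct computation shows the contribution is of order $r\eps$. On $[x^{*}, R]$ one has $\min(x, r) = r$ and $\max(\constepso\eps, s_\ell, x/2) = x/2$, so the integrand is $4\pi \constepso r\eps / x$, which integrates to a term proportional to $r\eps \cdot \log(R / x^{*})$. Folding the lower-range $O(r\eps)$ term into the logarithm via $1 + \log(R/x^{*}) = \log(eR/x^{*})$ yields the stated form. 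The main obstacle is purely bookkeeping: the three quantities $2\constepso\eps$, $2 s_\ell$, and $r$ can occur in any relative order, so one must verify case-by-case that the partitioned integral is uniformly dominated by $2\pi \constepso \log(eR/\max(\constepso\eps, r, s_\ell)) \cdot r\eps$; no new probabilistic inputs beyond Proposition \ref{a25} are needed.
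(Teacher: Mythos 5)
Your proposal is correct and follows essentially the same route as the paper's proof: apply Fact \ref{a56}, bound the integrand pointwise by Proposition \ref{a25} together with Lemma \ref{a57}, foliate $\Omega$ along the contours $\gamma_{\ell,x}$ using \eqref{a55}, and evaluate the resulting one-dimensional integral by splitting at a threshold. The paper organizes the final integral slightly differently, writing the integrand as $\min(a_{-1}/x,\,a_0,\,a_1 x)$ and casing on $a_0$ versus $(a_1 a_{-1})^{1/2}$, whereas you split at a single $x^{*}=\max(2\constepso\eps,\,2 s_\ell,\,r)$ — but this is a cosmetic difference in bookkeeping, not a different method (note in the case $x^{*}=r$ the denominator is not yet pinned on $[0,x^{*}]$ and needs one further sub-split at $2\max(\constepso\eps,s_\ell)$, which your ``case-by-case'' caveat covers).
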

\begin{proof}
Apply Proposition \ref{a25} to the right hand side of the inequality from Fact \ref{a56}.
\spliteq{}{
\E\area\Lambda_\eps(Q^*AQ)
\le\int_{z\in\Omega}\Pr\pare{\smin(Q^*(z-A)Q)\le\eps}
\le\int_{z\in\Omega}\min\pare{1,\frac{\constepso\eps}{\sigma_\ell(z-A)}}
}
Using the lower bound on $\sigma_\ell(z-A)$ from Lemma \ref{a57} gives
\[
\E\area\Lambda_\eps(Q^*AQ)
\le\int_{z\in\Omega}\min\pare{1,\frac{\constepso\eps}{\abs{z-z_{\ell}}},\frac{\constepso\eps}{s_\ell}}.\]
We can partition the region of integration into the union of $\gamma_{\ell, x}$ and apply Lemma \ref{a55},
\spliteq{}{
\E\area\Lambda_\eps(Q^*AQ)
&\le\int_0^R\len(\gamma_{\ell,x})\min\pare{1,\frac{\constepso\eps}{x},\frac{\constepso\eps}{s_\ell}}\wrt x
\\&\le2\pi\int_0^R\min(x,r)\min\pare{1,\frac{\constepso\eps}{x},\frac{\constepso\eps}{s_\ell}}\wrt x
\\&=2\pi\int_0^R\min\pare{\frac{a_{-1}}x,a_0,a_1x}\wrt x
.}
where
\spliteq{}{
a_{-1}=\constepso r\cdot\eps
\qand
a_0=\min\pare{r,\frac{a_{-1}}r,\frac{a_{-1}}{s_\ell}}
\qand
a_1=\min\pare{1,\frac{a_{-1}}{r\cdot s_\ell}}.}
First assuming that $a_0\le\pare{a_1a_{-1}}^{1/2}$, the integral becomes
\spliteq{}{
 \int_0^{a_0/a_1}a_1x\wrt x+\int_{a_0/a_1}^{a_{-1}/a_0}a_0\wrt x+\int_{a_{-1}/a_0}^R\frac{a_{-1}}x\wrt x
  &=\frac{a_0^2}{2a_1}+\pare{a_{-1}-\frac{a_0^2}{a_1}}+a_{-1}\log(Ra_0/a_{-1})
\\&\le a_{-1}\log(eRa_0/a_{-1}).}
If $a_0\ge\pare{a_1a_{-1}}^{1/2}$, the integral becomes
\spliteq{}{
 \int_0^{\pare{a_{-1}/a_1}^{1/2}}a_1x\wrt x+\int_{\pare{a_{-1}/a_1}^{1/2}}^R\frac{a_{-1}}x\wrt x
  &=\frac{a_{-1}}2+a_{-1}\log(R(a_1a_{-1})^{1/2}/a_{-1})
\\&\le a_{-1}\log(e^{1/2}R(a_1a_{-1})^{1/2}/a_{-1})
.}
This makes the overall bound
\spliteq{}{
\E\area\Lambda_\eps(Q^*AQ)
&\le2\pi a_{-1}\log\pare{\frac{eR\min\pare{(a_1a_{-1})^{1/2}, a_0}}{a_{-1}}}.
\\&=2\pi\constepso r\eps\log\pare{\frac{eR}{\max\pare{\constepso\eps,r,s_\ell}}}.
}
\end{proof}

\begin{theorem}\label{a2}
The following four quantities are upper bounds on $\E\area\Lambda_\eps(Q^*AQ)$, where $Q\sim\semiu(n,\ell)$ for $\ell\le n/2-7.5$
\begin{enumerate}
    \item \[{4\pi\constepst}\log^2\pare{\frac{\constlog 2eR^2}{\eps s_{\ell+8}}}\cdot\frac{R^2}{s_{\ell+8}^2}\cdot\eps^2\]
    \item \[{4\pi\constepst}\log^2\pare{\frac{\constlog 2eR^2}{\eps s_{\ell+8}}}\cdot\frac{R^2}{s_{\ell+8}r}\cdot\eps^2\]
    \item \[4\pi\constepst^{1/3}\log^2\pare{\frac{\constlog 2eR^{4/3}r^{1/3}}{\constepst^{1/3}\eps^{5/3}}}\cdot(Rr)^{2/3}\cdot\eps^{2/3}\]
    \item \[4\pi\constepst^{2/3}\log^2\pare{\frac{\constlog 2eR^{4/3}r^{1/3}}{\constepst^{1/3}\eps^{5/3} }}\cdot\frac{R^{4/3}}{r^{2/3}}\cdot\eps^{4/3}\]
    \item \[25(\constepst\constepso)^{2/5}\log(nR/\eps)\cdot R^{4/5}\cdot\eps^{6/5}.\]
\end{enumerate}
\end{theorem}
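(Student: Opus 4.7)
The plan is to start from Fact \ref{a56}, which rewrites $\E\area\Lambda_\eps(Q^*AQ)$ as $\int_\Omega \Pr(\smin(Q^*(z-A)Q)\le\eps)\,\wrt z$, and substitute the tail bounds from Propositions \ref{a25} and \ref{a15} pointwise in $z$. In both tail bounds, Lemma \ref{a57} replaces each $\sigma_k(z-A)$ by the shift-invariant lower bound $\max(s_k,|z-z_k|/2)$, so the integrand depends only on the distances $|z-z_k|$. This reduces the area integral to a one-dimensional integral in a radial variable, via the contour decomposition $\Omega=\bigcup_x\gamma_{k,x}$ with $\len(\gamma_{k,x})\le 2\pi\min(x,r)$ from \eqref{a55}, following the template of the proof of Lemma \ref{a54}.

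Bounds (1)--(4) arise by applying Proposition \ref{a15} and choosing, for each of the two denominator factors $\sigma_{\ell+8}(z-A)$ and $\sigma_{\ell+3}(z-A)^2$, whether to use the uniform lower bound $s_k$ or the position-dependent bound $|z-z_k|/2$. Keeping $\sigma_{\ell+3}^2$ uniform and $\sigma_{\ell+8}$ position-dependent produces bound (1): the one-dimensional integral $\int_0^R\min(x,r)/\max(s_{\ell+8},x/2)\,\wrt x\sim r\log(R/r)$, combined with the $\constepst/\inR(W(A))\sim1/r$ prefactor, gives $R^2/s_{\ell+8}^2\cdot\eps^2$. Swapping roles yields bound (2): the integral $\int_0^R\min(x,r)/\max(s_{\ell+3},x/2)^2\,\wrt x\sim\log(r/s_{\ell+3})$ produces $R^2/(s_{\ell+8}r)\cdot\eps^2$. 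Making both position-dependent yields bounds (3) and (4); the 1D integral takes the form $\int_0^R\min(x,r)\min(1,C\eps^2/x^3)\,\wrt x$, and splitting $\min(x,r)$ at $x=r$ produces two regimes whose balance yields either $(Rr)^{2/3}\eps^{2/3}$ (bound 3, dominated by $x\le r$) or $R^{4/3}/r^{2/3}\cdot\eps^{4/3}$ (bound 4, dominated by $x>r$).

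Bound (5) is obtained by Hölder interpolation between Lemma \ref{a54} and bound (4). Pointwise the tail probability is bounded by the minimum of the first-order bound $f_1$ from Proposition \ref{a25} and the second-order bound $f_2$ underlying (4); the inequality $\min(a,b)\le a^{2/5}b^{3/5}$ together with Hölder with conjugate exponents $5/2$ and $5/3$ gives
\[\int\min(f_1,f_2)\,\wrt z\le\pare{\int f_1\,\wrt z}^{2/5}\pare{\int f_2\,\wrt z}^{3/5}.\]
Substituting $\int f_1\,\wrt z\lesssim\constepso r\eps\log(\cdot)$ from Lemma \ref{a54} and $\int f_2\,\wrt z\lesssim\constepst^{2/3}R^{4/3}\eps^{4/3}/r^{2/3}\cdot\log^2(\cdot)$ from (4) delivers the $R^{4/5}\eps^{6/5}$ scaling with the $r$ factors cancelling exactly: $r^{2/5}\cdot(R^{4/3}/r^{2/3})^{3/5}=R^{4/5}$. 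The constants combine to $\constepso^{2/5}\constepst^{2/5}$, matching the theorem statement.

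The main obstacle is the bookkeeping for bounds (3) and (4): the 1D integral $\int_0^R\min(x,r)\min(1,C\eps^2/x^3)\,\wrt x$ has three regimes depending on where the $\min$'s saturate relative to $x=r$, and the correct dominant term depends on the ordering of $\eps$, $r$, and $R$. Log factors accumulate from both the $\log^2$ prefactor in Proposition \ref{a15} and from the integration itself, and must be simplified to match the explicit $\log^2(\cdot)$ in the theorem statement; the exact log arguments then follow by routine estimation.
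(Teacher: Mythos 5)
Your proposal hits all the paper's key ingredients---Fact~\ref{a56}, the pointwise substitution via Lemma~\ref{a57}, the contour decomposition $\Omega=\bigcup_x\gamma_{k,x}$ with $\len(\gamma_{k,x})\le2\pi\min(x,r)$, and the weighted geometric mean of the fourth bound with Lemma~\ref{a54} for bound~(5)---so the strategy is sound and essentially the same as the paper's. The one organizational difference worth flagging is in how bounds (1)--(4) are generated: you describe four separate integrals, each obtained by independently choosing whether to replace $\sigma_{\ell+8}$ and $\sigma_{\ell+3}^2$ by the uniform lower bound $s_k$ or the position-dependent one. The paper instead lower-bounds \emph{all three} singular values $\sigma_{\ell-1}\ge\sigma_{\ell+3}\ge\sigma_{\ell+8}$ by the single quantity $\max(s_{\ell+8},|z-z_{\ell+8}|)$, computes one integral bounded by $2\pi\min(r,t)\bigl(tf(0)+\tfrac{a}{t^2}\log^2(b/t)\bigr)$, and then sets $t=\max(s_{\ell+8},a^{1/3})$ with $a=\eps^2\constepst R^2/r$; the four bounds emerge simultaneously from enumerating the branches of $\min(r,t)$ and $\max(s_{\ell+8},a^{1/3})$. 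Both routes produce the theorem's bounds (yours in fact gives a slightly stronger $s_{\ell+3}^2$ in place of $s_{\ell+8}^2$ for bound~(1), which you then weaken), so this is a matter of presentation rather than a gap. For bound~(5), your Hölder argument $\int\min(f_1,f_2)\le(\int f_1)^{2/5}(\int f_2)^{3/5}$ is valid but unnecessary: since the same integral is bounded by both $B_1$ and $B_2$ it is trivially bounded by $B_1^{2/5}B_2^{3/5}$, which is exactly what the paper uses. One small point you should tighten before writing this out: the geometric mean of a $\log(\cdot)$ factor and a $\log^2(\cdot)$ factor yields $\log^{8/5}(\cdot)$, not $\log(\cdot)$ as in the theorem's bound~(5), so you need to check how that factor is being absorbed when matching the stated constant~$25$.
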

\begin{proof}
From Proposition \ref{a15}, we have
\[\Pr\pare{\smin(Q^*(z-A)Q)\le\eps}
\le\min\pare{1,
\eps^2\log^2\pare{\frac{\constlog}\eps\cdot\frac{2\magn{z-A}^2}{\sigma_{\ell-1}(z-A)}}\cdot\frac{\magn{z-A}^2}{\sigma_{\ell+8}(z-A)\sigma_{\ell+3}(z-A)^2}\cdot\frac{\constepst}{\inR(A)}}.
\]
We may two approximations to simplify this bound. The first is $\sigma_{\ell-1}(z-A)\ge\sigma_{\ell+3}(z-A)\ge\sigma_{\ell+8}(z-A)\ge\max(s_{\ell+8},\abs{z-z_{\ell+8}})$  by Lemma \ref{a57}. The second is $\magn{z-A}\le R$ for $z\in\Omega$. Plugging in these bounds for $z\in\gamma_{\ell+8,x}$ gives
\[
Pr\pare{\smin(Q^*(z-A)Q)\le\eps}
\le f(x):=\min\pare{1,\frac a{\max(s_{\ell+8},x)^3}\log^2\pare{\frac b{\max(s_{\ell+8},x)}}}
\]
where
\[
a=\frac{\eps^2\constepst R^2}{r}
\qand b=\frac{\constlog\cdot 2R^2}\eps.\]
Then by Fact \ref{a56} and Lemma \ref{a55}, we have
\spliteq{}{
\E\area\Lambda_\eps(Q^*AQ)
  \le\int_0^R\len(\gamma_{\ell+8,x})f(x)\wrt x
  \le2\pi\int_0^R\min(x,r)f(x)\wrt x.
}
We split the integral into multiple parts. We let $t$ denote one split point, and will specify $t$ later. When $t\le r$, we break the integral into two parts.
\spliteq{}{
\int_0^Rxf(x)\wrt x
  &\le\int_0^txf(x)\wrt x+\int_t^\infty xf(x)\wrt x
\\&\le\frac{t^2f(0)}2+\frac{a\log^2(b/t)}{t}
.}
When $t\ge r$, we break the integral into three parts.
\spliteq{}{
\int_0^R\min(x,r)f(r)\wrt r
&\le
 \int_0^rxf(0)\wrt x
+r\int_x^tf(0)\wrt x
+r\int_t^Rf(x)\wrt x
\\&\le
 \frac{r^2f(0)}2
+r(t-r)f(0)
+r\frac a2\log^2(b/t)\pare{\frac1{t^2}-\frac1{R^2}}
\\&\le
rtf(0)+\frac{ra}{2t^2}\log^2(b/t)
.}
We therefore have the overall bound of
\[
\E\area\Lambda_\eps(Q^*AQ)
\le
2\pi\inf_{t>0}\min(r,t)\pare{tf(0)+\frac a{t^2}\log^2(b/t)}.\]
We set $t=\max(s_{\ell+8},a^{1/3})$. When $a^{1/3}\le s_{\ell+8}$, note
\[f(0)\le\frac a{s_{\ell+8}^3}\log^2(b/s_{\ell+8})=\frac a{t^3}\log^2(b/t)\]
so
\[\E\area\Lambda_\eps(Q^*AQ)\le4\pi\min(r,t)\frac a{t^2}\log^2(b/t).\]
When $a^{1/3}\ge s_{\ell+8}$, note \[f(0)\le1=\frac a{t^3}\] so
\[
\E\area\Lambda_\eps(Q^*AQ)
\le
2\pi\min(r,t)\frac a{t^2}\log^2(eb/t).\]
The combined bound is therefore
\spliteq{}{
\E\area\Lambda_\eps(Q^*AQ)
  &\le4\pi\min(r,t)\frac{a}{t^2}\log^2(eb/t)
\\&=  4\pi\min\pare{\frac{ra}{t^2},\frac{a}{t}}\log^2(eb/t)
\\&=  4\pi\min\pare{\frac{ra}{s_{\ell+8}^2},\frac{a}{s_{\ell+8}},\frac{ra}{a^{2/3}},\frac{a}{a^{1/3}}}\log^2(eb/t)
\\&=  4\pi\min\pare{
\frac{ra}{s_{\ell+8}^2},
\frac{ra}{rs_{\ell+8}},
r^{2/3}(ra)^{1/3},
r^{-2/3}(ra)^{2/3}
}\log^2(eb/t)
.}
Plugging in $ra=(R\eps)^2\constepst$ and taking each of the four arguments of the minimum gives the first four bounds in the theorem statement. For the fifth bound, take the $\frac25$-$\frac35$ weighted geometric mean of the fourth bound and the bound from Lemma \ref{a54} to eliminate the $r$ from the right hand side.
\end{proof}

\begin{repcoro}{a3} \textit{Given $A\in\C^{n\times n}$, set
\[
\beta=\begin{cases}
    6/5 & \text{if assuming (a)}\\
    4/3 & \text{if assuming (a) and (b)}\\
    2 & \text{if assuming (a) and (c)}
\end{cases}.
\]
Then
\spliteq{}{
\E\area(\Lambda_\eps(Q^*AQ))\le\poly(n)\log^2(1/\eps)\cdot\eps^\beta}
where $Q\sim\semiu(n,\ell)$ for $\ell\le n/2-8$.
}\end{repcoro}
\begin{proof}
    Under assumption (a), that $W(A)$ is contained in a disk of radius $\poly(n)$, we can set $R=\poly(n)+\eps$ in the fifth bound from Theorem \ref{a2}. Under the additional assumption (b), that $W(A)$ contains a disk of radius $1/\poly(n)$, we can set $r=1/\poly(n)+\eps$ in the fourth bound from Theorem \ref{a2}. Instead if we additionally assume (c), that $s_{\ell+8}\ge1/\poly(n)$, then we can apply the first bound from Theorem \ref{a2}.
\end{proof}

\bibliographystyle{alpha}
\bibliography{outbib}

\appendix

\section{Integral calculation for Lemma \ref{a44}}
\begin{lemma}\label{a45}
    Fix real $a,b,\eps,\theta_0$ with $a,\eps>0$. Then
    \[
    \int_0^\pi\frac1{\max\pare{\eps^2,\,\abs{b+a\cos(2\theta-\theta_0)}}}\wrt\theta
    \le\frac{4\pi+16\log\pare{\sqrt2\max\left(\frac{\sqrt[4]{\abs{a^2-b^2}}}{\eps},1\right)}}{\sqrt{\abs{a^2-b^2}}}.\]    
\end{lemma}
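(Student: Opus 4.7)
The plan is to prove the inequality by explicit computation after reducing the integral to canonical form. First, the substitution $\phi = 2\theta - \theta_0$ combined with the $2\pi$-periodicity of $\cos$ and the symmetry $\cos\phi = \cos(2\pi - \phi)$ shows the left side equals $J := \int_0^\pi d\theta/\max(\eps^2, |b + a\cos\theta|)$, eliminating $\theta_0$. The further substitution $\theta \mapsto \pi - \theta$ flips the sign of $a\cos\theta$, so the integral is invariant under $b \mapsto -b$, and WLOG we take $b \ge 0$; the case $a = 0$ is trivial, so also take $a > 0$. Write $k := \sqrt{|a^2 - b^2|}$.

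If $b \ge a$ (Case A), then $b + a\cos\theta \ge b - a \ge 0$, so $\max(\eps^2, b + a\cos\theta) \ge b + a\cos\theta$ and the classical identity yields $J \le \int_0^\pi d\theta/(b + a\cos\theta) = \pi/\sqrt{b^2 - a^2} = \pi/k$, which is well within the target right-hand side. The interesting case is $0 \le b < a$ (Case B), where $b + a\cos\theta$ has a unique zero $\theta^* = \arccos(-b/a)$ of derivative magnitude $k$, and the $\eps^2$-cutoff regularizes a logarithmic divergence.

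For Case B, change variables $w = b + a\cos\theta$ to obtain
\[
J = \int_{b-a}^{b+a}\frac{dw}{\sqrt{a^2-(w-b)^2}\,\max(\eps^2, |w|)}.
\]
Split $J = J_1 + J_2$ by $|w| \le \eps^2$ vs $|w| > \eps^2$. In the main regime $\eps^2 \le (a-b)/2$, on $|w| \le \eps^2$ the factor $\sqrt{a^2-(w-b)^2}$ is bounded below by $k/\sqrt 2$ (its minimum on that interval occurs at $w = -\eps^2$ and equals $\sqrt{(a-\eps^2-b)(a+\eps^2+b)} \ge \sqrt{(a-b)(a+b)/2} = k/\sqrt 2$), yielding $J_1 \le 2\sqrt 2/k$. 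For $J_2$, invert the substitution: the halves $w > \eps^2$ and $w < -\eps^2$ pull back to integrals of the form $\int_0^{\theta_\pm} d\theta/(\pm b + a\cos\theta)$, each of which the Weierstrass substitution $t = \tan(\theta/2)$ integrates to $(1/k)\log|(\sqrt{a+c}+\sqrt{a-c}\,t)/(\sqrt{a+c}-\sqrt{a-c}\,t)|$ for $c = \pm b$. Evaluating at the cutoff, using $(a+b)(a-b) = k^2$, and rationalizing gives $J_2 \le (2/k)\log(2k^2/(a\eps^2) + 4)$, which matches the target logarithmic term after noting $k^2/a \le k$ and that $16\log(\sqrt 2 \sqrt{k}/\eps) = 8\log(2k/\eps^2)$.

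The main obstacle is bookkeeping in the remaining regimes where the above lower bound on $\sqrt{a^2-(w-b)^2}$ degenerates ($\eps^2 > (a-b)/2$) or where $J_2$ is empty entirely ($\eps^2 > a+b$). In those regimes I plan to use the trivial bound $J \le \pi/\eps^2$ (from $\max \ge \eps^2$ everywhere) and verify it fits within the right-hand side via the relation $k^2 = (a-b)(a+b) \le 2\eps^2(a+b)$, combined with $a \le \eps^2$ in the second regime, which forces $k \le \sqrt{2}\,\eps^2$ and hence $\pi/\eps^2 \le 4\pi/k$. Tracking the constants to match the $4\pi$ and $16$ in the statement is routine once these cases are cleanly separated.
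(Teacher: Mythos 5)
Your proposal takes a genuinely different route from the paper. Where the paper rewrites the integral as a Lebesgue integral $\int_{\eps^2}^\infty x^{-2}\mu\{\theta:|g(\theta)|\le x\}\,dx$ and then manipulates explicit antiderivatives $F_1,F_2$ built from $\cos^{-1}$ and $\log$, you substitute $w=b+a\cos\theta$, split on $|w|\lessgtr\eps^2$, and evaluate the tails by Weierstrass substitution. Your reduction to $\theta_0=0$, the WLOG $b\ge0$, the Case A bound $\pi/k$, and the main-regime estimates $J_1\le 2\sqrt2/k$ (using the lower bound $\sqrt{a^2-(w-b)^2}\ge k/\sqrt2$ under $\eps^2\le(a-b)/2$) and $J_2\le(2/k)\log(2k^2/(a\eps^2)+O(1))$ are all correct and, with $k\ge2\eps^2$ and $k^2/a\le k$, do track to the stated constants. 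Your "second regime" $\eps^2>a+b$ also closes: there $k\le\sqrt2\,\eps^2$, so $\pi/\eps^2\le 4\pi/k$.

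However, the intermediate regime $(a-b)/2<\eps^2<a+b$ is not handled by the trivial bound $J\le\pi/\eps^2$, and this is a genuine gap. The inequality $\pi/\eps^2\le 4\pi/k$ requires $k\le 4\eps^2$, but from $a-b<2\eps^2$ alone you only get $k\le\eps\sqrt{2(a+b)}$, which can be far larger than $\eps^2$. Concretely, take $a=10$ and $b=10-\eps^2/2$ with $\eps$ small: then $(a-b)/2=\eps^2/4<\eps^2<a+b$, and $k=\sqrt{(a-b)(a+b)}\approx\sqrt{10}\,\eps$, so $4\pi/k=\Theta(1/\eps)$ while $\pi/\eps^2=\Theta(1/\eps^2)$ is much larger. (The inequality itself is still true there: the cutoff $\max=\eps^2$ is active only on an arc of length $O(\eps/\sqrt a)=O(\eps)$ near $\theta=\pi$, because $b+a\cos\theta$ degenerates quadratically at $\theta=\pi$; that arc contributes $O(1/(\eps\sqrt a))=O(1/k)$, and the remainder is handled by your $J_2$ argument on the single positive tail $w>\eps^2$.) So the fix is to keep the $J_1/J_2$ split in this regime and bound $J_1$ by $(\pi-\theta^+)/\eps^2$ with a quadratic estimate for $\pi-\theta^+$, rather than falling back to $\pi/\eps^2$. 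As written, the degenerate-regime portion of your argument does not go through.
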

\begin{proof}
By periodicity, we may assume without loss of generality that $\theta_0=0$.
When $a<\abs b$, there is an antiderivative for $\frac1{b+a\cos(2\theta)}$ which gives an upper bound of
\[\frac\pi{\sqrt{\abs{a^2-b^2}}}.\]
So now assume $a\ge\abs b$.
Let $g(\theta)=b+a\cos(2\theta)$.
By symmetry, it suffices to integrate over $\theta\in[0,\pi/2]$.
We can express the integral explicitly as a Lebesgue integral,
\spliteq{}{
\int_0^{\pi/2}g(\theta)\wrt\theta
  &=\int_0^{\infty}\mu\pare{\set{\theta\in[0,\pi/2]:\max(\eps^2,\abs{g(\theta)})^{-1}\ge t}}\wrt t
\\&=\int_0^{1/\eps^2}\mu\pare{\set{\theta\in[0,\pi/2]:\abs{g(\theta)}\le t^{-1}}}\wrt t
\\&=\int_{\eps^2}^{\infty}x^{-2}\mu\pare{\set{\theta\in[0,\pi/2]:\abs{g(\theta)}\le x}}\wrt x.}
The set we need the measure of will simply be an interval, with end points either the intersection of $\abs{g(\theta)}$ and $t$ or the endpoints of the domain, $0,\pi/2$. In particular, the measure is simply the difference in the values of the endpoints. By rearranging $\pm g(\theta)=t$, we obtain an explicit expression for the integrand. Set
\[
f_1(x)=\begin{cases}
    x^{-2}\cos^{-1}\pare{-\frac{x+b}a} & 0\le x\le a-b
    \\
    x^{-2}\pi & a-b\le x
\end{cases},
\qand
f_2(x)=\begin{cases}
    x^{-2}\cos^{-1}\pare{\frac{x-b}a} & 0\le x\le a+b
    \\
    0 & a+b\le x
\end{cases}.
\]
Then the integrand is $f_1(x)-f_2(x)$. There are continuous anti-derivatives available. Let $x'=\min(x,a-b)$.
\[
\int f_1=F_1(x)=
\frac{\log\left(x'\right)-\log\left(\sqrt{a^{2}-b^{2}}\sqrt{a^{2}-\left(b+x'\right)^{2}}+a^{2}-b\left(b+x'\right)\right)}{\sqrt{a^{2}-b^{2}}}-\frac{\cos^{-1}\left(-\frac{b+x'}{a}\right)}{x}.
\]
Let $x''=\min(x,a+b)$.
\[
\int f_2=F_2(x)=
\frac{\log\left(\sqrt{a^{2}-b^{2}}\sqrt{a^{2}-\left(b-x''\right)^{2}}+a^{2}+b\left(x''-b\right)\right)-\log\left(x''\right)}{\sqrt{a^{2}-b^{2}}}-\frac{\cos^{-1}\left(\frac{x''-b}{a}\right)}{x}.
\]
Note $F_1(\infty)=-F_2(\infty)=\frac{\log(1/a)}{\sqrt{a^2-b^2}}$. So the bound we seek is $F_2(\eps^2)-F_1(\eps^2)$. We first deal with the $\cos^{-1}$ terms. Start by observing with some case work that
\[
\frac{\cos^{-1}\left(-\frac{b+x'}{a}\right)}{x}
-
\frac{\cos^{-1}\left(\frac{x''-b}{a}\right)}{x}
\le\min\pare{\frac\pi x,\,\frac{\cos^{-1}\pare{2\cdot\frac{\abs b}a-1}}{a-\abs b}}
\le\min\pare{\frac\pi x,\,\frac\pi{\sqrt{a^2-b^2}}}
\]
where the second inequality came from $\cos^{-1}(2z-1)\le\pi\sqrt{(1-z)/(1+z)}$ for $z\in[0,1]$. Now for the $\log$ terms. Examine just the numerators. Note they will only be increased by removing the $(b+x')^2$ and $(b-x'')^2$ terms. So we have
\spliteq{}{
&\pare{\log\pare{\sqrt{a^2-b^2}\cdot a+a^2+b(x''-b)}-\log(x'')}
-\pare{\log(x')-\log\pare{\sqrt{a^2-b^2}\cdot a +a^2-b(b+x')}}
\\=
&\log\pare{\sqrt{a^2-b^2}\cdot a+a^2+b(x''-b)}+\log\pare{\sqrt{a^2-b^2}\cdot a +a^2-b(b+x')}-\log(x'x'')
\\\le
&\log\pare{\sqrt{a^2-b^2}\cdot a+a^2+ba}+\log\pare{\sqrt{a^2-b^2}\cdot a +a^2-ba}-\log(x'x'')
\\=
&2\log(a)+\log\pare{\sqrt{a^2-b^2}+a+b}+\log\pare{\sqrt{a^2-b^2}+a-b}-\log(x'x'')
\\=
&2\log(a)+\log\pare{2\cdot\frac{a+b}{x''}}+\log\pare{2\cdot\frac{a-b}{x'}}.}
The $2\log(a)$ terms cancel with the contributions from $F_1(\infty)=-F_2(\infty)=\log(1/a)/\sqrt{a^2-b^2}$, so all that remains is to estimate
\spliteq{}{
\log\pare{2\cdot\frac{a+b}{x''}}+\log\pare{2\cdot\frac{a-b}{x'}}
&=
\log\pare{4\max\pare{\frac{a+b}{x},1}\max\pare{\frac{a-b}{x},1}}
\\&\le
\log\pare{4\max\left(\frac{{a^2-b^2}}{x},1\right)}.}
We therefore have an overall bound of
\[
F_1(\infty)-F_2(\infty)-F_1(\eps^2)+F_2(\eps^2)
\le\frac{\pi+4\log\pare{\sqrt2\max\left(\frac{\sqrt[4]{a^2-b^2}}{\eps},1\right)}}{\sqrt{a^2-b^2}}.
\]
We need to multiply by $4$ because we integrated over $[0,\pi/2]$ instead of $[0,2\pi]$.
\end{proof}

\begin{lemma}[Proposition C.3 in \cite{b2}]\label{a53}
    Let $X$ be the $r\times r$ corner of $U\sim\semiu(n,r)$. Then
    \[\Pr\pare{ \sigma_r(X)\ge\frac{\sqrt{r(n-r)}}\theta }\ge1-\theta^2.\]
\end{lemma}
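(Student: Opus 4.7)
The plan is to recognize $\sigma_r(X)^2$ as the minimum eigenvalue of a complex Jacobi ensemble and then compute its tail in closed form by a Selberg-style change of variables. Represent $U$ as $U = G(G^*G)^{-1/2}$ with $G\in\C^{n\times r}$ having iid $\cnormal(0,1)$ entries, partitioned as $G = \bmat{G_1\\G_2}$ with $G_1\in\C^{r\times r}$ and $G_2\in\C^{(n-r)\times r}$. Then $X = G_1(G_1^*G_1 + G_2^*G_2)^{-1/2}$, so $X^*X$ is a complex MANOVA matrix and the joint density of the eigenvalues $x_1,\ldots,x_r\in(0,1)$ of $X^*X$ is the complex Jacobi ensemble
\[
p(x_1,\ldots,x_r) = c_{n,r}\prod_{i<j}(x_i-x_j)^2\prod_{i=1}^r(1-x_i)^{n-2r},
\]
with no $x_i^{\alpha}$ factor because $G_1$ is square (i.e., its Wishart degrees-of-freedom parameter equals its dimension).

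The main step is to compute $\Pr(\sigma_r(X)^2 \ge t) = \Pr(\min_i x_i \ge t)$ exactly. Substitute $y_i = (x_i-t)/(1-t)$ in the integral of $p$ over $\{x:\,x_i\ge t\text{ for all }i\}$: each Vandermonde factor $(x_i-x_j)$ contributes one power of $(1-t)$, each weight factor $(1-x_i)$ contributes $(1-t)$, and each $dx_i$ contributes $(1-t)$, while the remaining integrand over $[0,1]^r$ is exactly the original normalizer $c_{n,r}^{-1}$. Tallying powers gives $r(r-1) + r(n-2r) + r = r(n-r)$, so
\[
\Pr(\sigma_r(X)^2 \ge t) = (1-t)^{r(n-r)}.
\]

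To conclude, set $t = \theta^2/(r(n-r))$ and apply Bernoulli's inequality $(1-x)^k \ge 1-kx$:
\[
\Pr\pare{\sigma_r(X) \ge \frac{\theta}{\sqrt{r(n-r)}}} = \pare{1 - \frac{\theta^2}{r(n-r)}}^{r(n-r)} \ge 1-\theta^2.
\]
The statement as displayed has $\sqrt{r(n-r)}/\theta$ where the proof and the sole invocation in Lemma~\ref{a52} (applied with $\theta = 1/2$ to deduce $\sigma_r \ge 1/(2\sqrt{r(n-r)})$ with probability $\ge 3/4$) require $\theta/\sqrt{r(n-r)}$, so this appears to be a typographical swap in the statement. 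The only step demanding any care is pinning down the correct Jacobi weight $(1-x_i)^{n-2r}$ with no $x_i^{\alpha}$ factor (which hinges on the squareness of $G_1$); after that the change of variables is mechanical and Bernoulli's inequality finishes the argument in one line, so I do not foresee a substantive obstacle.
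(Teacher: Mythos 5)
Your proof is correct, and it is necessarily a different route from the paper's, because the paper does not prove this lemma at all --- it is imported verbatim as Proposition C.3 of \cite{b2}. Your argument is self-contained and in fact stronger than what is quoted: writing $U=G(G^*G)^{-1/2}$ for Gaussian $G$, identifying the eigenvalues of $X^*X$ with the $\beta=2$ Jacobi (MANOVA) ensemble with weight $\prod_i(1-x_i)^{n-2r}$, and performing the affine substitution $x_i=t+(1-t)y_i$ yields the exact identity $\Pr(\sigma_r(X)^2\ge t)=(1-t)^{r(n-r)}$, from which Bernoulli's inequality gives the tail bound; you also correctly diagnose that the displayed statement has the ratio inverted, since $\sigma_r(X)\le1$ makes the event $\sigma_r(X)\ge\sqrt{r(n-r)}/\theta$ impossible for $\theta<1$, and the intended form $\Pr\pare{\sigma_r(X)\ge\theta/\sqrt{r(n-r)}}\ge1-\theta^2$ is exactly what the proof of Lemma \ref{a52} uses with $\theta=1/2$. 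One small caveat: the Jacobi density you write down (exponent $n-2r$, no atoms) requires $n\ge 2r$; for $r\le n<2r$ the law of $X^*X$ has deterministic unit eigenvalues and is not absolutely continuous, so your derivation does not literally apply there (the exact formula $(1-t)^{r(n-r)}$ does remain valid, e.g.\ by a complementary-block/duality argument, and in this paper the lemma is only invoked with $r=\ell'\le n/2-5$, so nothing is affected). It would be worth stating the hypothesis $n\ge 2r$ or adding a sentence covering the remaining range, and, independently, fixing the typo in the lemma statement.
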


\end{document}